\documentclass[a4paper,11pt]{article}

\usepackage[top=3.5cm, bottom=3.5cm, left=2.5cm, right=2.5cm]{geometry} 

\usepackage{fancyhdr}
\pagestyle{fancy}
\fancyhf{}

\fancyhead[C]{\sc\nouppercase\leftmark}
\fancyfoot[C]{\thepage}
\setlength{\headheight}{25.16252pt} 

\usepackage{titlesec}
\titleformat{\chapter}[hang]
  {\vspace{-1cm}\normalfont\huge\bfseries\Huge}
  {\chaptertitlename\ \thechapter}{10pt}{\,\,\,\,\sc\Huge}
\usepackage{titletoc}

\usepackage[sc,margin=1cm,font=small]{caption}

\usepackage{arydshln}
\usepackage{graphicx}
\usepackage{subfigure}
\usepackage{here}
\usepackage{float}
\usepackage{amsthm}
\usepackage{amssymb}
\usepackage{epstopdf}
\usepackage[latin1]{inputenc} 
\usepackage[english]{babel} 
\usepackage{amsmath}
\usepackage{amssymb}
\usepackage{mathrsfs}
\usepackage{enumerate}
\usepackage{color}
\usepackage[colorlinks=false,linkcolor=blue]{hyperref}
\usepackage[nottoc, notlof, notlot]{tocbibind}
\usepackage{titling}
\usepackage{mathtools}
\usepackage[]{pdfpages}
\usepackage[numbers,sort&compress]{natbib}
\newcommand{\subtitle}[1]{%
  \posttitle{%
    \par\end{center}
    \begin{center}\large#1\end{center}
    \vskip0.5em}%
}
\usepackage{pgf,tikz}
\usepackage{mathrsfs}
\usetikzlibrary{arrows}
\definecolor{uuuuuu}{rgb}{0.26666666666666666,0.26666666666666666,0.26666666666666666}
\usepackage{young}
\hyphenation{e-qui-li-bri-um}
\usepackage{authblk}


\DeclareFontFamily{U}{matha}{\hyphenchar\font45}
\DeclareFontShape{U}{matha}{m}{n}{
      <5> <6> <7> <8> <9> <10> gen * matha
      <10.95> matha10 <12> <14.4> <17.28> <20.74> <24.88> matha12
      }{}
\DeclareSymbolFont{matha}{U}{matha}{m}{n}
\DeclareFontSubstitution{U}{matha}{m}{n}

\DeclareFontFamily{U}{mathx}{\hyphenchar\font45}
\DeclareFontShape{U}{mathx}{m}{n}{
      <5> <6> <7> <8> <9> <10>
      <10.95> <12> <14.4> <17.28> <20.74> <24.88>
      mathx10
      }{}
\DeclareSymbolFont{mathx}{U}{mathx}{m}{n}
\DeclareFontSubstitution{U}{mathx}{m}{n}

\DeclareMathDelimiter{\vvvert}{0}{matha}{"7E}{mathx}{"17}

\newcommand{\R}{\mathbb{R}}

\newcommand{\N}{\mathbb{N}}

\newcommand{\HH}{\mathbb{H}}

\newcommand{\Sph}{\mathbb{S}}

\newcommand{\floor}[1]{\left\lfloor#1\right\rfloor}

\newcommand{\OO}{\mathcal{O}}

\renewcommand{\SS}{\mathscr{S}}

\newcommand{\1}{\mathbf{1}}
\newcommand{\MM}{\mathscr{M}}
\DeclareMathOperator{\Var}{Var}

\DeclareMathOperator{\Vect}{Span}

\DeclareMathOperator{\diag}{diag}

\DeclareMathOperator{\Card}{Card}

\DeclareMathOperator{\Orb}{Orb}

\DeclareMathOperator{\Tr}{Tr}

\DeclareMathOperator{\Span}{Span}

\DeclareMathOperator{\Hess}{Hess}
\setlength{\tabcolsep}{10pt}

\addto\captionsenglish{%
}

\newtheoremstyle{exemple}
	{\topsep}
	{\topsep}
	{\normalfont}
	{}
	{\sc}
	{.}
	{5pt plus 1pt minus 1pt}
	{}

\newtheorem{theorem}{Theorem}[]

\newtheorem{proposition}{Proposition}[section]

\newtheorem{lemma}{Lemma}[section]

\newtheorem{corollary}{Corollary}[section]

\theoremstyle{definition}
\newtheorem{definition}{Definition}[section]

\theoremstyle{example}

\newtheorem{rem}{Remark}[section]

\title{\bf Phase transitions and macroscopic limits in a BGK model of body-attitude coordination}
\date{}
\author[1]{P. Degond}
\author[2]{A. Diez}
\author[3]{A. Frouvelle}
\author[4]{S. Merino-Aceituno}
\affil[1]{\footnotesize
Department of Mathematics, Imperial College London, South
Kensington Campus,
London, SW7 2AZ, UK,

\url{pdegond@imperial.ac.uk}}
\affil[2]{
Department of Mathematics, Imperial College London, South
Kensington Campus,
London, SW7 2AZ, UK,

\url{antoine.diez18@imperial.ac.uk}
}
\affil[3]{
CEREMADE, CNRS, Université Paris-Dauphine, Université PSL, 75016 Paris,
France,

\url{frouvelle@ceremade.dauphine.fr}
}
\affil[4]{
University of Vienna, Faculty of Mathematics, Oskar-Morgenstern-Platz 1, 1090 Wien, Austria

\url{sara.merino@univie.ac.at}

University of Sussex, Department of Mathematics, Falmer BN1 9RH, UK,

\url{s.merino-aceituno@sussex.ac.uk}

}
\begin{document}
\renewcommand{\proofname}{\sc{Proof}}
\maketitle
\abstract{In this article we investigate the phase transition phenomena that occur in a model of self-organisation through body-attitude coordination. Here, the body-attitude of an agent is modelled by a rotation matrix in $\R^3$ as in \cite{degondfrouvellemerino17}. The starting point of this study is a BGK equation modelling the evolution of the distribution function of the system at a kinetic level. The main novelty of this work is to show that in the spatially homogeneous case, self-organisation may appear or not depending on the local density of agents involved. We first exhibit a connection between body-orientation models and models of nematic alignment of polymers in higher dimensional space from which we deduce the complete description of the possible equilibria  Then, thanks to a gradient-flow structure specific to this BGK model, we are able to prove the stability and the convergence towards the equilibria in the different regimes. We then derive the macroscopic models associated to the stable equilibria in the spirit of \cite{degondfrouvellemerino17} and \cite{degondfrouvelleliu15}.}\\

\noindent\textit{Keywords:} Collective motion; Vicsek model; generalized collision invariant; rotation group; \\

\noindent AMS Subject Classification: 34D05, 35Q92, 58J60, 82B26, 82C22, 92D50

\tableofcontents

\section{Introduction}

The model studied in the present work is a new elaboration of the work initiated in \cite{degondfrouvellemerino17} to model collective behaviour of agents described by their position and body-attitude. New results about emergence of phenomena of body-attitude coordination are presented in the context of a Bhatnagar-Gross-Krook (BGK) model. Such models can be applied to many biological systems such as flocking birds \cite{hildenbrandt2010self}, fish school \cite{hemelrijk2010emergence,hemelrijk2012schools} or sperm motion \cite{degond2015multi}. These systems are constituted by a large number of self-propelled agents which move at a constant speed and try to imitate their neighbours by moving in the same direction and trying to coordinate their body attitude. The agents are modelled by a moving frame in dimension~3, i.e. three orthogonal axes, one of which gives the direction of the motion and the two others the body orientation. In this work, as in~\cite{degondfrouvellemerino17}, the body attitude is modelled by a rotation matrix in dimension 3, i.e. an element of the special orthogonal group $SO_3(\R)$. In \cite{degondfrouvellemerinotrescases18} agents are modelled by quaternions. \\

Collective behaviour in many-agent systems has been a thoroughly studied subject in the mathematical literature, from both theoretical and applied points of view. Among the models which have received the most attention, one can cite the Cucker-Smale model \cite{cucker2007emergent,ha2009simple,motsch2011new}, attractive-repulsive models \cite{carrillo2017review} or the Vicsek model for self-propelled particles \cite{vicsek1995novel}. The present work belongs to the class of Vicsek-inspired models. Such models have two main distinctive features, first the assumption that the particles are self-propelled and secondly a geometrical constraint: in the original work of Vicsek, the velocities of the particles have constant norm and the dynamics therefore takes place on the sphere $\Sph^{n-1}$ in dimension~$n$ ($n=2$ in \cite{vicsek1995novel}). Here the dynamics takes place on the Riemannian manifold $SO_3(\R)$. \\ 

The tools used to study models of collective behaviour are generally borrowed from the mathematical kinetic theory of gases which gives a mathematical framework to study many-particle systems. At a microscopic scale, the motion of each particle is detailed (Individual Based Model, IBM) through Ordinary Differential Equations (ODE) coming from Newton's laws or through stochastic processes. When the number of particles is large, the whole system is described at a mesoscopic scale by a kinetic partial differential equation such as the Boltzmann, Fokker-Planck or BGK equation. Finally, large-scale dynamics is described by macroscopic equations (Euler, Navier-Stokes\ldots). A review of the main results of kinetic theory of gases can be found in \cite{degond2004macroscopic}. In particular the BGK equation (for Bhatnagar-Gross-Krook) was introduced in \cite{bhatnagar1954model} as a substitute for the Boltzmann equation in the context of gas dynamics. The BGK operator is a relaxation operator towards a Maxwellian having the same moments as the distribution function of the system. Its mathematical properties and relevance in the mathematical kinetic theory of gases have been studied in particular in \cite{perthame1989global} and \cite{saint2003bgk}. The BGK operator has been used in a model of collective dynamics of self-propelled particles in \cite{dimarcomotsch16}. However, together with \cite{degondfrouvellemerinotrescasesAlignment18}, it is the first time that it is rigorously studied in a body-attitude coordination model.\\ 

The main mathematical challenge in classical kinetic theory is the rigorous derivation of the kinetic equations from the IBM and of the macroscopic models from the kinetic equations. These questions are at the core of Hilbert's sixth problem and have received much attention in the last decades. Many different techniques have been developed to derive kinetic equations from hard-sphere gases (Boltzmann-Grad limit \cite{lanford1975time,gallagher2013newton}), from systems of interacting particles (mean-field limit and propagation of chaos \cite{jabin2014review,hauray2007n,sznitman89}) or from stochastic processes (and in particular jump processes \cite{mischler2013kac,kac1956foundations}). Some of these techniques have been adapted to problems arising in the study of collective behaviour \cite{chuang2007state,bolley2011stochastic,bolley2012mean}. The passage from kinetic equations to macroscopic models generally depends on physical constraints and in particular on conservation laws (hydrodynamic limits, Hilbert and Chapman-Enskog methods, see \cite{cercignani2013mathematical,degond2004macroscopic} for a review) and is still an active research field \cite{guo2010global,esposito2018stationary,caflisch1980fluid,golse2004navier}.
In the context of self-propelled particles, due to the lack of conservation laws which normally hold in the classical kinetic theory of gases, specific tools are needed. In \cite{degondmotsch08}, a methodological breakthrough has been achieved by introducing the so called Generalised Collisional Invariants (GCI) to rigorously link kinetic and macroscopic equations in the context of collective behaviour of self-propelled particles. This technique is now rigorously justified \cite{jiang2016hydrodynamic} and has already been successfully applied to a wide range of problems \cite{jiang2017coupled,zhang2017local}. It will be the key here to derive the macroscopic model in Section \ref{macroscopic}. This will lead to a system of partial differential equations on the mean density and body attitude, referred as the Self-Organised Hydrodynamics for Body-attitude coordination (SOHB) in \cite{degondfrouvellemerino17}.    \\ 

The aim of this work is to show the emergence of collective behaviour and self-organisation which give rise to macroscopic scale patterns such as clusters, travelling bands etc. These patterns emerge from the collective interactions and are not directly encoded in the behaviour of the individual particles as described by the IBM. The continuum version of the Vicsek model \cite{degondmotsch08} named the Self-Organised Hydrodynamics (SOH) model is an exemple of a model able to describe such emergence of self-organised dynamics. The Vicsek model describes a system where agents try to imitate their neighbours by adapting their direction of motion to the average direction of their neighbours. It has been shown that, in a certain scaling and when the equilibrium of the system is reached, the directions of motion of the agents are not uniformly distributed but follow a von Mises distribution. For $\kappa\in\R_+$ and $\Omega\in\Sph^{n-1}$ the von Mises distribution of parameters $\kappa$ and $\Omega$ is the Probability Density Function (PDF) on $\Sph^{n-1}$ defined by:
\[ M_{\kappa\Omega}(\omega):=\frac{e^{\kappa\Omega\cdot\omega}}{\int_{\Sph^{n-1}}e^{\kappa\Omega\cdot\omega'}\,d\omega'},\]
where the dot product is the usual dot product in $\R^n$. This model \cite{degondmotsch08} has been the starting point of many other models of self-organised dynamics, including \cite{degondfrouvellemerino17} for the body-attitude coordination. In this context, we define the von Mises distribution of parameter $J\in\MM_3(\R)$ (a $3\times3$ real matrix) as the following PDF on $SO_3(\R)$:
\[M_J(A):=\frac{e^{J\cdot A}}{\int_{SO_3(\R)} e^{J\cdot A'}\,dA'},\]
where the dot product and the measure on $SO_3(\R)$ come from the Riemannian structure of $SO_3(\R)$ detailed in Section \ref{SO3technique}.\\

In the present work, we focus on phase transition phenomena between non-organised and organised dynamics (collective motion). We will prove that the spatial density of agents is the key parameter which encodes the main features of phase transitions: in low density regions, no self-organised dynamics appears but when the density crosses a critical value, self-organised dynamics, given by a von Mises distribution for the body-attitude, becomes a stable equilibria of the system. This phase transition in the dynamics is purely an emergent phenomena, in the sense that at the macroscopic scale, different equations are required to describe the dynamics for different values of the density of agents, whereas for the IBM and at a mesoscopic level, the dynamics is described by one unique (system of) equation(s).\\

The starting point of this study is the BGK equation 
\begin{equation*}
\partial_t f+(Ae_1\cdot\nabla_x)f=\rho_f M_{J_f}-f,\end{equation*}
where $f(t,x,A)$ is a probability measure which gives the distribution of agents at position $x\in\R^3$ with body-orientation $A\in SO_3(\R)$ at time $t\in\R_+$ and where: 
\[\rho_f(t,x)=\int_{SO_3(\R)} f(t,x,A)\,dA\,\,\,\,\,\,\text{and}\,\,\,\,\,J_f=\int_{SO_3(\R)} f(t,x,A)A\,dA\]
are the respective local density and flux. The measure on $SO_3(\R)$ is the normalised Haar measure, the main properties of which are summarised in Section \ref{SO3technique}. \\

The left-hand side of the equation models the transport phenomenon: an agent with body orientation $A\in SO_3(\R)$ moves in the direction $Ae_1$ where $e_1$ is the first vector of the canonical basis of $\R^3$. The right-hand side of the equation is the BGK operator which models the interactions between the agents: here we assume that $f$ relaxes towards a ``moving equilibrium'' which takes the form of a von Mises distribution. In particular, the von Mises distribution appears as the analog of the Maxwellian distribution of the classical gas dynamics. The flux $J_f$ plays the same role as the momentum density for gas dynamics or the average flux for the Vicsek model. The term $\rho_f M_{J_f}$ can therefore be seen as the analog of the ``Maxwellian distribution with same moments as $f$'' in the context of the BGK equation for gas dynamics.\\

The main results of this work are (informally) summarised in the two following theorems. 

\begin{theorem}\label{theorem1} Let us consider the spatially homogeneous BGK equation:
\[\partial_t f=\rho M_{J_f}-f,\]
where $\rho\in\R_+$ is a given density of agents. 
\begin{enumerate}
\item The equilibria $f^\mathrm{eq}$ of the spatially homogeneous BGK model are either the uniform equilibrium $f^\mathrm{eq}=\rho$ or of the form $f^\mathrm{eq}=\rho M_{\alpha\Lambda}$ or $f^\mathrm{eq}=\rho M_{\alpha\,p\otimes q}$ where $\Lambda\in SO_3(\R)$ and $p,q\in \Sph^2$ and where $\alpha\in\R$ and $\rho$ are linked by a compatibility equation to be defined later (see Section \ref{paragraphequilibria} and equations \eqref{compatibilityc1} and \eqref{compatibilityc2}). 
\item Depending on the density of agents $\rho\in\R_+$, the only stable equilibria are either the uniform equilibrium $f^\mathrm{eq}=\rho$ or the equilibria of the form $f^\mathrm{eq}=\rho M_{\alpha\Lambda}$ where $\Lambda~\in~SO_3(\R)$ and where $\alpha\in\R_+$ is linked to $\rho$ by a compatibility equation to be defined later.
\end{enumerate}
\end{theorem}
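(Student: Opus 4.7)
The plan is to treat the two parts of the statement in sequence: first extract all candidate equilibria from a self-consistency equation, then isolate the stable ones via a gradient-flow argument.

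For part 1, setting $\partial_t f = 0$ in the spatially homogeneous equation $\partial_t f = \rho M_{J_f} - f$ forces any equilibrium to have the form $f^{\mathrm{eq}} = \rho M_{J^{\mathrm{eq}}}$, with $J^{\mathrm{eq}}$ satisfying the self-consistency relation
\[
J^{\mathrm{eq}} = \rho \int_{SO_3(\R)} M_{J^{\mathrm{eq}}}(A)\, A\, dA.
\]
I would analyse this fixed-point equation using the singular value decomposition $J = P \Sigma Q^T$ together with the left and right invariance of the Haar measure under $A \mapsto P^T A Q$, absorbing the determinant signs by allowing one singular value of $\Sigma$ to change sign. This reduces the problem to a diagonal self-consistency equation on three scalars. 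Exploiting the connection to nematic alignment of polymers in higher dimension that is announced in the introduction, the resulting diagonal equations should admit only three families of solutions: the trivial one $J=0$ (uniform equilibrium), the fully isotropic one $\Sigma = \alpha I_3$, which gives $J = \alpha\Lambda$ with $\Lambda = PQ^T \in SO_3(\R)$, and the rank-one one $\Sigma = \alpha\,\mathrm{diag}(1,0,0)$ up to permutation, which gives $J = \alpha\, p \otimes q$ with $p,q \in \Sph^2$. Making the diagonal self-consistency explicit then yields the compatibility equations linking $\alpha$ and $\rho$.

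For part 2, I would rely on the gradient-flow structure mentioned in the abstract, introducing a free energy functional of the schematic form
\[
\mathcal{F}[f] := \int_{SO_3(\R)} f\, \log\!\Big(\tfrac{f}{\rho}\Big)\, dA - \tfrac{1}{2\rho}\,|J_f|^2,
\]
or a close analogue adapted to the BGK operator, and showing that it is a Lyapunov functional: strictly decreasing along non-stationary solutions and critical precisely at the equilibria produced in part 1. Stability then reduces to identifying which equilibria are local minima of $\mathcal{F}$, a question one can translate, using the diagonalisation above, into a second-variation computation on the singular values of $J$. I would expect the uniform equilibrium to be the unique minimiser below a critical density $\rho_c$, the rank-one equilibria $\alpha\, p \otimes q$ to be saddles destabilised by perturbations that make $\Sigma$ more isotropic, and the equilibria $\alpha \Lambda$ to become local minima above $\rho_c$.

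The main obstacle I foresee is the second-variation analysis itself. The non-trivial equilibria come in full continuous orbits under the $SO_3(\R)$ left and right symmetries of the problem, so only conditional stability modulo these orbits can hold, and care is needed when decomposing perturbations into tangent and transverse components. Identifying exactly which transverse modes destabilise the rank-one family is where the dictionary with nematic polymer models will have to be used most delicately: it is precisely the rotational modes, those mixing the zero singular values of $\Sigma$ with the non-zero one, that should produce negative eigenvalues of the Hessian of $\mathcal{F}$ and prevent these equilibria from ever being stable.
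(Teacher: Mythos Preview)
Your Part~1 follows the paper's route: reduce the matrix self-consistency equation $J=\rho\langle A\rangle_{M_J}$ to diagonal matrices via the (special) SVD and Haar-measure invariance, then invoke the nematic-polymer dictionary. The paper makes this last step concrete through the group isomorphism $SO_3(\R)\cong\HH/\{\pm1\}$ together with a linear isomorphism $\phi:\MM_3(\R)\to\SS_4^0(\R)$, under which the diagonal compatibility equation becomes exactly the Wang--Hoffman system $s_j=\langle m_j^2\rangle_{g_{\mathbf{s},b}}$ on $\Sph^3$; their result that $\{s_1,\dots,s_4\}$ takes at most two values then forces the three families you list. You should expect to need this quaternion bridge explicitly rather than treat the polymer connection as a black box.

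Your Part~2 is a genuinely different route. You work with the free energy $\mathcal{F}[f]$ on densities and plan a second-variation analysis there, which forces you to confront the $SO_3(\R)\times SO_3(\R)$ orbit degeneracy you flag. The paper bypasses all of this by observing that the BGK equation \emph{closes} at the level of $J_f$: multiplying by $A$ and integrating gives the autonomous ODE $\dot J=\rho\langle A\rangle_{M_J}-J$ on $\MM_3(\R)$, which is itself the gradient flow of the finite-dimensional potential $V(J)=\tfrac12\|J\|^2-\rho\log\mathcal{Z}(J)$. After the same SSVD reduction this becomes a gradient flow on $\R^3$ with $P,Q$ frozen by the initial datum, so stability is read off from the signature of a single $3\times3$ Hessian and no orbit issues arise. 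The paper remarks that your free-energy route, restricted to $D\mapsto\mathcal{F}[M_D]$, yields the same signatures, so your plan is viable; but the finite-dimensional shortcut is precisely what makes the BGK case cleaner than Fokker--Planck. One correction to your intuition: the unstable direction at the rank-one equilibrium $\alpha\,\diag(1,0,0)$ is not a rotational mode but a diagonal one (eigenvector $(0,1,1)^T$ or $(0,1,-1)^T$), i.e.\ a perturbation that lifts the zero singular values rather than one that rotates them.
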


The first point of this theorem is detailed in Section \ref{paragraphequilibria} (see in particular Theorem \ref{solutionscompatibility} and Corollary \ref{phasediagramcorollary}). The second point is detailed in Section \ref{convergence} (see in particular Theorem \ref{theoremconvergence}). We will then prove the following result. 

\begin{theorem}[Formal]\label{theorem2} Let us consider the rescaled spatially inhomogeneous problem
\[\partial_t f^\varepsilon+(Ae_1\cdot \nabla_x)f^\varepsilon=\frac{1}{\varepsilon}\Big(\rho_{f^\varepsilon} M_{J_{f^\varepsilon}}-f^\varepsilon\Big),\]
where 
\[\rho_f(t,x) = \int_{SO_3(\R)} f(t,x,A)\,dA\,\,\,\,\,\text{and}\,\,\,\,\,J_f(t,x)=\int_{SO_3(\R)} f(t,x,A)A\,dA.\]
\begin{enumerate}
\item We assume that in a disordered region, $f^\varepsilon$ converges as $\varepsilon\to0$ towards a density $\rho=\rho(t,x)$ uniform in the body-attitude variable. Then the density $\rho^\varepsilon\equiv\rho_{f^\varepsilon}$ satisfies at first order the following diffusion equation: 
\[\partial_t\rho^\varepsilon=\varepsilon\nabla_x\cdot\left(\frac{\frac{1}{3}\nabla_x\rho^\varepsilon}{1-\frac{\rho^\varepsilon}{\rho_c}}\right),\,\,\,\,\,\,\rho_c=6.\]
\item We assume that in an ordered region, $f^\varepsilon$ converges as $\varepsilon\to0$ towards an equilibrium of the form $\rho M_{\alpha\Lambda}$ with $\rho\in\R_+$, $\alpha\in\R_+$ and $\Lambda\in SO_3(\R)$ defined above. Then the density $\rho=\rho(t,x)$ and mean body attitude $\Lambda=\Lambda(t,x)\in SO_3(\R)$ satisfy the SOHB model given by the following system of partial differential equations:
\begin{subequations}
\label{SOHB}
\begin{align}
&\partial_t \rho+\nabla_x\cdot (\rho c_1(\alpha(\rho))\Lambda e_1)=0, \label{SOHB1} \\
&\rho(\partial_t\Lambda+\tilde{c}_2((\Lambda e_1)\cdot\nabla_x)\Lambda)+\tilde{c_3}[(\Lambda e_1)\times\nabla_x\rho]_\times\Lambda \nonumber \\ 
&\hspace{4.2cm}+c_4\rho[-\mathbf{r}_x(\Lambda)\times(\Lambda e_1)+\delta_x(\Lambda)\Lambda e_1]_\times\Lambda=0. \label{SOHB2}
\end{align}
\end{subequations}
where $\alpha=\rho c_1(\alpha)$ and $\tilde{c}_2$, $\tilde{c_3}$, $c_4$ are functions of $\rho$ to be defined later and $\delta$ and $\mathbf{r}$ are the ``divergence'' and ``rotational'' operators defined in \cite{degondfrouvellemerino17} (see Section \ref{macroscopic})
\end{enumerate}
\end{theorem}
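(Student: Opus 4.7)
\bigskip

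\noindent\textbf{Proof proposal.} The plan is to treat both parts by a Hilbert-type expansion of $f^\varepsilon$ around the limit equilibrium identified in Theorem~\ref{theorem1}, passing to the limit $\varepsilon \to 0$ in the kinetic equation after projection onto well-chosen test functions. In the disordered regime, the relevant expansion is around the uniform equilibrium $\rho^\varepsilon(t,x)$; in the ordered regime, around the von Mises equilibrium $\rho^\varepsilon(t,x) M_{\alpha(\rho^\varepsilon)\Lambda^\varepsilon(t,x)}$, with $\alpha$ determined by the compatibility equation.

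For the disordered regime, I would write $f^\varepsilon = \rho^\varepsilon + \varepsilon f_1 + O(\varepsilon^2)$ with $\int_{SO_3(\R)} f_1\,dA = 0$. Since $J_{f^\varepsilon} = \varepsilon J_{f_1} + O(\varepsilon^2)$, expanding $M_J(A) \approx 1 + J\cdot A$ for small $J$ (using $\int_{SO_3(\R)} A\,dA = 0$ by invariance of the Haar measure), the order $\varepsilon^0$ balance of the kinetic equation becomes $(Ae_1 \cdot \nabla_x)\rho^\varepsilon = \rho^\varepsilon (J_{f_1}\cdot A) - f_1$. Taking the first matrix moment in $A$ and using the orthogonality relation $\int_{SO_3(\R)} A_{ij}A_{kl}\,dA = \tfrac{1}{3}\delta_{ik}\delta_{jl}$ for the fundamental irreducible representation of $SO_3(\R)$, one solves for $J_{f_1}e_1$ as a multiple of $\nabla_x \rho^\varepsilon$ with a coefficient of the form $1/(1 - \rho^\varepsilon/\rho_c)$, where $\rho_c$ is the eigenvalue of the relevant block of the linearised gain operator. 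Substituting this expression into the mass conservation law $\partial_t \rho^\varepsilon + \nabla_x \cdot \int_{SO_3(\R)} A e_1 f^\varepsilon\,dA = 0$ obtained by integrating the kinetic equation in $A$ produces the announced diffusion equation.

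For the ordered regime, the strategy follows the Generalised Collision Invariants (GCI) method of \cite{degondmotsch08,degondfrouvellemerino17}. Writing $f^\varepsilon = \rho M_{\alpha\Lambda} + \varepsilon f_1 + O(\varepsilon^2)$ with $\rho = \rho(t,x)$, $\Lambda = \Lambda(t,x) \in SO_3(\R)$ and $\alpha = \alpha(\rho)$, the order $\varepsilon^{-1}$ term vanishes since $\rho M_{\alpha\Lambda}$ is an equilibrium, and the order $\varepsilon^0$ balance reads $L_{\rho,\alpha,\Lambda}[f_1] = \partial_t(\rho M_{\alpha\Lambda}) + (Ae_1 \cdot \nabla_x)(\rho M_{\alpha\Lambda})$, where $L_{\rho,\alpha,\Lambda}$ is the linearised BGK operator around $\rho M_{\alpha\Lambda}$. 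Integrating against the constant collision invariant $1$ yields equation~\eqref{SOHB1} directly, using the identity $\int A\,M_{\alpha\Lambda}\,dA = c_1(\alpha)\Lambda$ obtained via the change of variables $B = \Lambda^T A$ and $SO_3(\R)$-equivariance. To obtain equation~\eqref{SOHB2}, I would construct a three-parameter family of GCI $\{\psi_\Lambda^k\}_{k=1,2,3}$ indexed by an infinitesimal orthonormal basis of the tangent space to $SO_3(\R)$ at $\Lambda$, defined as solutions of the adjoint linearised problem whose right-hand side encodes the infinitesimal variation of $M_{\alpha\Lambda}$ under the corresponding rotation. Testing the kinetic equation against each $\psi_\Lambda^k$ eliminates the unknown $f_1$ and produces three scalar equations, which combine into the vector equation~\eqref{SOHB2}, the coefficients $\tilde c_2, \tilde c_3, c_4$ emerging as explicit moments of $A$ against $M_{\alpha\Lambda}$.

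The main obstacle is the existence and explicit computation of the GCI on the manifold $SO_3(\R)$. Existence requires a Fredholm-type argument applied to $L^{*}_{\rho,\alpha,\Lambda}$, resting on a Poincar\'e-type coercivity estimate for the linearised BGK operator on a suitable weighted $L^2$ space; invariance under left multiplication by $\Lambda$ reduces the problem to the case $\Lambda = I$, and a further symmetry reduction in the spirit of \cite{degondfrouvellemerino17} should yield a one-dimensional linear ODE whose solution gives the GCI in closed form. Once the GCI are identified, the derivation of $\tilde c_2, \tilde c_3, c_4$ reduces to the mechanical but lengthy evaluation of moments of the von Mises distribution on $SO_3(\R)$, likely expressible in terms of the modified Bessel functions already appearing in the analysis of the compatibility equation.
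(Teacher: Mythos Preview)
Your treatment of Part~1 is essentially identical to the paper's: same Chapman--Enskog ansatz, same linearisation of $M_J$, same moment closure via the Schur-type relation $\int_{SO_3(\R)}(J\cdot A)A\,dA=\tfrac{1}{6}J$, leading to the same diffusion equation.

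For Part~2 your strategy is correct in outline but misidentifies where the work lies. For the BGK operator $\mathcal{L}_J(f)=\rho_f M_J-f$, the GCI are \emph{explicit linear functions of $A$}: the defining condition unwinds to $\langle\psi\rangle_{M_J}-\psi(A)=B\cdot A$ for some $B\in T_\Lambda$, so $\mathscr{C}_J=\mathrm{Span}\bigl(1,\{\psi_P^\Lambda\}_{P\in\mathscr{A}_3(\R)}\bigr)$ with $\psi_P^\Lambda(A)=-P\cdot\Lambda^TA$. There is no Fredholm alternative, no Poincar\'e coercivity, no one-dimensional ODE, and no Bessel functions --- those are features of the Fokker--Planck analysis in \cite{degondfrouvellemerino17}, where the collision operator is differential and the GCI genuinely solve an elliptic problem on $SO_3(\R)$. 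The paper exploits this simplicity by multiplying the kinetic equation directly by $\psi_P^{\Lambda^\varepsilon}$, which kills the right-hand side \emph{exactly} for every $\varepsilon$ (not only at leading order), so no corrector $f_1$ is ever introduced; one then passes to the limit in the left-hand side and collects the antisymmetric part $\Lambda^TA-A^T\Lambda$.

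A second point your proposal does not anticipate: here, unlike in the normalised model of \cite{degondfrouvellemerino17}, the concentration parameter $\alpha=\alpha(\rho)$ depends on $\rho$ through the compatibility equation $\alpha=\rho c_1(\alpha)$. Differentiating $\rho M_{\alpha(\rho)\Lambda}$ therefore produces additional terms proportional to $\alpha'(\rho)(\partial_t+Ae_1\cdot\nabla_x)\rho$. The paper isolates these as a contribution $Y$ which, after computation, modifies only the coefficient in front of $[(\Lambda e_1)\times\nabla_x\rho]_\times\Lambda$: this is why the resulting $\tilde c_3$ differs from the constant $c_3=1$ of \cite{degondfrouvellemerino17}, while $\tilde c_2$ and $c_4$ coincide with the earlier $c_2,c_4$. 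The moment computations themselves are carried out not with Bessel functions but via the Rodrigues-angle parametrisation of $SO_3(\R)$, yielding the coefficients as averages $\{\cdot\}_\alpha$ against the density $\sin^2(\theta/2)e^{\alpha\cos\theta}$ on $[0,\pi]$.
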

This theorem is detailed in Section \ref{macroscopic} (see in particular Proposition \ref{diffusionproposition} and Theorem \ref{theoremSOHB}).\\

The phase transition problem has been completely treated in the space-homogeneous case for the Vicsek model in \cite{degondfrouvelleliu15} but the geometrical structure inherent to body-orientation models requires specific tools and techniques. In particular, the rotation group $SO_3(\R)$ is a compact Lie group, endowed with a Haar measure. The links between this topological structure and the Riemannian structure (detailed in Section \ref{SO3technique} and Appendix \ref{moreonson}) will be the key to
reduce the problem to a form that shares structural properties with the models of nematic alignment of polymers, studied in a completely different context to model liquid crystals \cite{han2015microscopic,wanghoffman08,wangzhou11,ball2010nematic,ball2017mathematics}. These two worlds will be formally linked through the isomorphism between $SO_3(\R)$ and the group of unit quaternions detailed in Section \ref{paragraphdiagonalsolutions} and Appendix \ref{quaternionsandrotations}. It will lead to the first point of Theorem \ref{theorem1} (the complete description of the equilibria, Section \ref{paragraphequilibria}). As in \cite{wanghoffman08} we will see that there exist a class of equilibria which cannot be interpreted as equilibria around a mean-body orientation. These equilibria were not studied in \cite{degondfrouvellemerino17,degondfrouvellemerinotrescases18}. A key point of the proof will be the reduction to a problem for diagonal matrices which will be a consequence of the left and right invariance of the Haar measure together with an adapted version of the Singular Value Decomposition of a matrix (Definition \ref{ssvd}). \\

The stability of the different equilibria, are studied in Section \ref{paragraphstability}. We will show that our model has an underlying gradient-flow structure which will allow us to determine the asymptotic behaviour of the system after a reduction to an ODE in $\R^3$. This is a specificity of the BGK model which doesn't hold for the other models of body-attitude coordination \cite{degondfrouvellemerino17,degondfrouvellemerinotrescases18} and allows us to use different and simpler techniques. In particular, we will prove that the equilibria which cannot be interpreted as equilibria around a mean body-orientation are always unstable, which tends to justify the analysis carried out in \cite{degondfrouvellemerino17} for a model where only equilibria around a mean body-orientation were considered. \\

Finally, the SOHB model (Section \ref{macroscopic}) will be obtained as in \cite{degondfrouvellemerino17} by using the GCI. However, compared to \cite{degondfrouvellemerino17}, additional terms appear which require a specific treatment and in particular the coefficient $\tilde{c_3}$ that appears in Theorem \ref{theorem2} is different from the one that appears in \cite{degondfrouvellemerino17}. The SOHB model \eqref{SOHB} raises many questions, most of which are still open, and its mathematical and numerical analyses are still in progress. In particular, the hyperbolicity of the model is currently under study \cite{hyperbolicityinprogress} and has been shown when $\tilde{c_3}$ is constant. \\

The organisation of the work is the following: in Section \ref{model} we will give a review of the existing models at a microscopic and mesoscopic scales and motivate the study of the BGK equation among them. In Section \ref{SO3technique}, we gather the main technical results we will constantly use throughout this work. In Section \ref{paragraphequilibria}, we will describe, depending on the density, all the possible equilibria of the system. We will use the tools developed to mathematically study the alignment of polymers \cite{wanghoffman08,wangzhou11}. In Section \ref{convergence} we will describe the asymptotic behaviour of the system and in particular which equilibria are attained, leading to a self-organised dynamics or not. This will be based on a specific underlying gradient-flow structure of the BGK equation. Finally in Section \ref{macroscopic} we will write the macroscopic models for the stable equilibria. \\ 

\noindent\textbf{Notations.} For the convenience of the reader, we collect here the main notations we will use in the following.
\begin{enumerate}[$\bullet$]
\item $\MM_n(\R)$ is the set of $n\times n$ real matrices.
\item $\mathscr{D}_n(\R)\subset\MM_n(\R)$ is the subspace of $n\times n$ diagonal real matrices.
\item $\Tr(M)$ denotes the trace of the matrix $M\in\MM_n(\R)$ and $M^T$ its transpose.
\item $I_n$ denotes the identity matrix in dimension $n$.
\item $\diag:\R^n\to\mathscr{D}_n(\R)$ is the vector space isomorphism such that for $(d_1,\dots,d_n)\in\R^n$, $D=\diag(d_1,\dots,d_n)$ is the diagonal matrix, the $(i,i)$-th coefficient of which is equal to $d_i$ for $i\in\{1,\ldots,n\}$.
\item $\mathscr{S}_n(\R)$ and $\mathscr{A}_n(\R)$ denote respectively the sets of symmetric and skew-symmetric matrices of dimension $n$. 
\item $SO_n(\R)$ is the special orthogonal group in dimension $n$, i.e. the group of matrices $P\in\MM_n(\R)$ such that $PP^T=I_n$ and $\det P>0$.
\item $\Sph^n\subset\R^{n+1}$ is the sphere of dimension $n$.
\item $\HH$ is the group of unitary quaternions.
\item $\langle\cdot\rangle_{g}$ denotes the mean for the probability density $g$ on $SO_n(\R)$. We will simply write $\langle\cdot\rangle$ when $g$ is the uniform probability ($g\equiv1$).
\item $A$ will generically be a rotation matrix in $SO_3(\R)$ and $a_{ij}$ its $(i,j)$ coefficient. 
\item $PD(M)$ is the orthogonal part of the polar decomposition of $M\in\MM_n(\R)$ when $\det M\ne0$~: there exists a unique couple $(PD(M),S)\in\OO_n(\R)\times\SS_n(\R)$ such that $M=PD(M)S$. The matrix $PD(M)$ is given by $PD(M)=M\left(\sqrt{M^TM}\right)^{-1}$. 
\item For a matrix $M\in\MM_3(\R)$, the \textit{orbit} $\Orb(M)\subset \MM_3(\R)$ is defined by: 
\begin{equation}\label{orbit}\Orb(M):=\{PMQ,\,\,\,P,Q\in SO_3(\R)\}.\end{equation}
\item $\R_+:=[0,+\infty)$, $\R_+^*:=(0,+\infty)$
\end{enumerate}

\section{The BGK equation and other related models of self-organisation}\label{model}

In this section we give a review of the different existing models of collective dynamics at both microscopic and mesoscopic levels and emphasise the singularity of the BGK model among them.

\subsection{A review of the different IBM}

The rigorous proofs of the two following theorems (Theorems \ref{theoremmeanfieldinhomogeneous} and \ref{theoremmeanfieldhomogeneous}) can be found in \cite{diez} in a more general framework. \\

At a microscopic level, we fix a reference frame given by the canonical basis $(e_1,e_2,e_3)$ of $\R^3$. The agents are described by their position $X\in\R^3$ and their body-attitude $A\in SO_3(\R)$ which can be seen as a moving frame. We assume that an agent with body attitude $A\in SO_3(\R)$ moves at a constant speed in the direction of the first vector of $A$~: the instantaneous velocity of the agent is $A e_1$.\\

In the following we consider an increasing sequence of jump times $(T_n)_n$ such that the increments between two jumps are independent and follow an exponential law of parameter $N\in\N^*$ (their expectation is $1/N$). The $N$ agents are described at time $t\in\R_+$ by their positions and body-attitudes $Z^{N}_t=\big\{(X^{i,N}_t,A^{i,N}_t)\big\}_{i\in\{1,\ldots,N\}}\in \big(\R^3\times SO_3(\R)\big)^N$. The interactions between the agents can be modelled by the following Piecewise Deterministic Markov Process (PDMP) which has already been described heuristically in \cite{degondfrouvellemerinotrescasesAlignment18,dimarcomotsch16}: 
\begin{enumerate}
\item Between two jump times $({T}_n,{T}_{n+1})$, the systems evolves in a deterministic way:
\[\forall i\in\{1,\ldots,N\},\,\,\,\,\,\,\,dX_t^{i,N}=(A^{i,N}_te_1)dt,\,\,\,\,\,\,\,dA^{i,N}_t=0.\]
\item At time ${T}_{n+1}$, a particle $i\in\{1,\ldots,N\}$ is chosen uniformly among the $N$ particles.  At time ${T}_{n+1}^+$, the new body-orientation of particle $i$ is sampled from the PDF $M_{J^i\big(Z_{{T}_{n+1}^-}^N\big)}$ where for $Z^N\in(\R^3\times SO_3(\R))^N$ we define the flux:
\[J^{i}(Z^N):=\frac{1}{N}\sum_{j=1}^{N} K\Big(|X^{i,N}-X^{j,N}|\Big)A^{j,N}\in\MM_3(\R),\]
and where $K$ is a smooth observation kernel.
\end{enumerate}
The following theorem describes the limiting behaviour of the laws of the particles when $N\to+\infty$ under the assumption that the empirical measure of the $N$ processes converges (weakly) towards a smooth function $f$ (propagation of chaos property). The equation on $f$ can be derived as in \cite[Section 4.2]{degondfrouvellemerinotrescasesAlignment18}. 
\begin{theorem}\label{theoremmeanfieldinhomogeneous}
Let $f_0$ be a probability measure on the space $\R^3\times SO_3(\R)$ and let $Z_0^N\in(\R^3\times SO_3(\R))^N$ be an initial state given by $N$ independent random variables, identically distributed with law $f_0$. Then for any $t\in\R_+$, the law $f^N_t$ of any of one of the processes $(X^{i,N}_t,A^{i,N}_t)_t$ at time $t$ converges weakly towards the solution $f_t$ of the following BGK equation with initial condition $f_0$: 
\[\partial_t f+(Ae_1\cdot\nabla_x)f=\rho_f M_{J_{K*f}}-f\]
where $J_{K*f}$ is a matrix-valued function of the space variable $x\in\R^3$ defined by
\[J_{K*f}(x):=\iint_{\R^3\times SO_3(\R)} K(x-y)Af(y,A)dy dA\in\MM_3(\R).\]
\end{theorem}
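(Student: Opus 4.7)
The theorem is a standard propagation-of-chaos statement for an exchangeable PDMP on $(\R^3\times SO_3(\R))^N$, whose only non-standard feature is the fact that jumps resample from the von Mises family $M_J$ on the compact manifold $SO_3(\R)$. I would prove it by the classical synchronous coupling method of Sznitman, since the interaction has the required regularity: the smoothness of $K$ gives $Z^N\mapsto J^i(Z^N)$ Lipschitz, and the smooth dependence $J\mapsto M_J$ together with the compactness of $SO_3(\R)$ gives $\|M_{J_1}-M_{J_2}\|_{L^1}\le C\|J_1-J_2\|$.

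\textbf{Step 1 (limiting nonlinear process).} Construct the McKean--Vlasov PDMP $(\bar X_t,\bar A_t)$ with free transport $d\bar X_t=\bar A_t e_1\,dt$, $d\bar A_t=0$, and jumps of rate $1$ at which the body-attitude is resampled from $M_{J_{K*f_t}(\bar X_t)}$, where $f_t=\mathrm{Law}(\bar X_t,\bar A_t)$. Well-posedness follows from a Picard iteration on $C([0,T],\mathcal{P}(\R^3\times SO_3(\R)))$ endowed with the bounded-Lipschitz distance: the map $g\mapsto f$ obtained by solving the linear BGK equation with frozen flux $J_{K*g}$ along characteristics is a contraction on short times, using the two Lipschitz estimates above. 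Existence and uniqueness for the BGK equation, seen as the Kolmogorov forward equation of this nonlinear PDMP, follows.

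\textbf{Step 2 (synchronous coupling).} On a common probability space realise $N$ i.i.d. copies $(\bar X^i,\bar A^i)_{i=1}^N$ of the nonlinear process and the $N$-particle PDMP with the same initial condition, the same Poisson clock $(T_n)_n$ of intensity $N$, the same uniformly chosen index $i_n$ at each jump, and a single uniform $U_n\in[0,1]$ which is used to sample simultaneously from $M_{J^{i_n}(Z^N_{T_n^-})}$ and from $M_{J_{K*f_{T_n^-}}(\bar X^{i_n}_{T_n^-})}$ via an optimal $L^1$-Wasserstein (for the Riemannian distance on $SO_3(\R)$) coupling of these two probability measures. Because $SO_3(\R)$ is a compact Polish space, this coupling exists and respects the marginal laws of each process.

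\textbf{Step 3 (Grönwall).} Set $\mathcal{E}^N_t=\E\bigl[|X^{1,N}_t-\bar X^1_t|+d_{SO_3}(A^{1,N}_t,\bar A^1_t)\bigr]$. The transport part contributes at most $C\,\mathcal{E}^N_t$ because $A\mapsto Ae_1$ is Lipschitz on $SO_3(\R)$. The jump part at time $T_n$ contributes the $W_1$-cost of the coupled resampling, which by Step~2 and the smooth dependence of $M_J$ on $J$ is bounded by $C\|J^{i_n}(Z^N_{T_n^-})-J_{K*f_{T_n^-}}(\bar X^{i_n}_{T_n^-})\|$. Splitting
\[J^{i}(Z^N_t)-J_{K*f_t}(\bar X^{i}_t)=\frac{1}{N}\sum_{j=1}^N\bigl[K(X^{i,N}_t-X^{j,N}_t)A^{j,N}_t-K(\bar X^i_t-\bar X^j_t)\bar A^j_t\bigr]+\Bigl[\frac{1}{N}\sum_{j=1}^N K(\bar X^i_t-\bar X^j_t)\bar A^j_t-J_{K*f_t}(\bar X^i_t)\Bigr]\]
and using exchangeability, the first bracket is bounded in expectation by $C\,\mathcal{E}^N_t$, while the second is an empirical fluctuation of i.i.d. bounded matrix-valued variables and is $O(N^{-1/2})$ in $L^2$. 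Grönwall's lemma then yields $\mathcal{E}^N_t\le C(T)N^{-1/2}$ on $[0,T]$. This implies weak convergence of $f^N_t=\mathrm{Law}(X^{1,N}_t,A^{1,N}_t)$ to $f_t$, proving the theorem.

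\textbf{Main obstacle.} The delicate point is Step~2, namely coupling the jumps so that post-jump distances on $SO_3(\R)$ are quantitatively controlled by the pre-jump distance between the matrix parameters of the two von Mises targets. The compactness of $SO_3(\R)$, the uniform positivity of the von Mises density, and the smoothness of $J\mapsto M_J$ make this possible via optimal transport, but some care is needed because a single jump only affects one particle, so the other $N-1$ coordinates stay coupled as they were; exchangeability is what makes the per-jump error estimate close into a scalar Grönwall inequality for $\mathcal{E}^N_t$.
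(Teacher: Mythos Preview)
The paper does not prove this theorem: the sentence immediately preceding it explicitly defers the rigorous argument to an external reference (``The rigorous proofs of the two following theorems can be found in [diez] in a more general framework''), and the paper only adds that the limiting equation can be obtained heuristically as in another cited work. There is therefore no in-paper proof to compare your proposal against.

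On its own merits, your outline is the standard Sznitman synchronous-coupling route to propagation of chaos for mean-field PDMPs, and the ingredients you isolate (compactness of $SO_3(\R)$, Lipschitz dependence $J\mapsto M_J$ in $L^1$, smoothness of $K$, law of large numbers for the empirical flux, Gr\"onwall closure) are the right ones. Two technical points would need care in a full write-up: (i) the optimal $W_1$-coupling of $M_{J_1}$ and $M_{J_2}$ invoked in Step~2 depends on the random pair $(J_1,J_2)$, so a measurable selection of couplings is needed before you can legitimately sample both targets with a single common $U_n$; (ii) turning the per-jump error bound into a scalar Gr\"onwall inequality for $\mathcal{E}^N_t$ involves summing over the $\sim Nt$ jumps on $[0,t]$, each of which touches only one particle, and closing this via exchangeability requires a bit more bookkeeping than you indicate (one usually writes the coupled pair in Duhamel form and then takes expectations). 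Neither point is a genuine obstruction.
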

In the previous works, the IBM were typically given as in \cite{degondfrouvellemerino17} by a system of stochastic differential equations such as the following: 
\begin{subequations}
\begin{align}
&{d X_k}=A_k(t)e_1\,dt,\\
& {dA_k}= P_{T_{A_k}}\circ \left(\Big(\frac{1}{N}\sum_{i=1}^{N} K(|X_i-X_k|) A_i\Big)dt+2\sqrt{D}dB_t\right).
\end{align}
\end{subequations}
where $P_{T_{A_k}}$ denotes the projection on the tangent space of $SO_3(\R)$ at $A_k\in SO_3(\R)$ (see Section \ref{SO3technique}). In this case, the resulting equation when $N\to+\infty$ is a non-linear Fokker-Planck equation (see \cite{bolley2012mean} for a rigorous proof in the Vicsek case).\\

In the spatially homogeneous case, we can take the observation kernel $K$ to be constantly equal to 1 to prove the mean-field limit. The agents are described at time $t\in\R_+$ only by their body-attitudes $\big\{A^{i,N}\big\}_{i\in\{1,\ldots,N\}}\in SO_3(\R)^N$ and they follow the following jump process: at each jump time $T_n$, compute the flux
\[J^N_t=\frac{1}{N}\sum_{i=1}^N A_t^{i,N},\]
choose a particle $i\in\{1,\ldots N\}$ uniformly among the $N$ particles and draw the new body-orientation $A^{i,N}_{T_n^+}$ after the jump according to the law given by the PDF $M_{J^N_{T_n^-}}$.
The following theorem describes analogously the limiting behaviour of the laws of the particles as $N\to+\infty$.
\begin{theorem}\label{theoremmeanfieldhomogeneous}
Let $\big\{A^{i,N}_0\big\}_{i\in\{1,\ldots,N\}}\in SO_3(\R)^N$ be an initial state given by $N$ independent random variables, identically distributed according to a law $f_0$ on $SO_3(\R)$. Then for any $t\in\R_+$, the law $f^N_t$ of any of one of the processes $(A^{i,N}_t)_t$ at time $t$ converges weakly towards the solution $f_t$ of the following spatially homogeneous BGK equation with initial condition $f_0$: 
\[\partial_t f=M_{J_{f}}-f.\]
\end{theorem}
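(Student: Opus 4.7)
The plan is a Sznitman-style synchronous coupling with the nonlinear McKean process $(\bar A_t)_{t\geq 0}$ associated with the limit BGK equation. This process has initial law $f_0$ and is defined by: between independent rate-$1$ exponential times, $\bar A_t$ is constant; at each such time it is resampled from $M_{J_{\bar f_{t^-}}}$, where $\bar f_t := \mathrm{Law}(\bar A_t)$ and $J_{\bar f} := \int_{SO_3(\R)} A\,\bar f(\mathrm{d}A)$. Because the flux of any probability measure on $SO_3(\R)$ lies in the compact convex hull of $SO_3(\R)\subset\MM_3(\R)$, the map $J\mapsto M_J$ is smooth and globally Lipschitz in total variation on this set (the partition function is bounded uniformly away from $0$ and $+\infty$). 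A Picard iteration in $C([0,T];\mathcal{P}(SO_3(\R)))$ equipped with the total variation distance therefore gives existence and uniqueness of the flow $(\bar f_t)$ and of the process, and a direct computation of the infinitesimal generator gives $\partial_t \bar f = M_{J_{\bar f}} - \bar f$.

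Next, I would reformulate the $N$-particle PDMP in the equivalent ``one clock per particle'' form: each particle $i$ carries an independent rate-$1$ Poisson clock, so the merged clock has rate $N$ and the index is uniform, matching the statement. Using the \emph{same} $N$ clocks, I construct $N$ independent copies $(\bar A^{i}_t)_{i=1,\dots,N}$ of the nonlinear process by coupling, at every ring of clock $i$, the two resampling laws $M_{J^{i}(Z^N_{t^-})}$ and $M_{J_{\bar f_{t^-}}}$ through the optimal total-variation coupling, i.e.\ the one which maximises the probability of equality.

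By exchangeability, the quantity $p_N(t):=\PP(A^{1,N}_t\neq \bar A^{1}_t)$ controls propagation of chaos. At each ring of particle $1$ the post-jump values disagree with probability at most $\|M_{J^{1}(Z^N_{t^-})}-M_{J_{\bar f_{t^-}}}\|_{TV}\leq C\,\norme{J^{1}(Z^N_{t^-})-J_{\bar f_{t^-}}}$. Using the splitting
\[\norme{J^{1}(Z^N_t)-J_{\bar f_t}}\leq \frac{1}{N}\sum_{j=1}^N \norme{A^{j,N}_t-\bar A^{j}_t} + \norme{\frac{1}{N}\sum_{j=1}^N \bar A^{j}_t - \E \bar A^{1}_t},\]
exchangeability gives $\E\bigl[\frac{1}{N}\sum_j \norme{A^{j,N}_t-\bar A^{j}_t}\bigr]\leq C\,p_N(t)$ (each summand vanishes on the coupled event and is bounded otherwise), while the standard law-of-large-numbers estimate on the i.i.d.\ bounded variables $\bar A^j_t$ gives $\E\norme{\frac{1}{N}\sum_j \bar A^{j}_t-\E\bar A^{1}_t}\leq CN^{-1/2}$. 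Collecting, one obtains the integral inequality
\[p_N(t)\leq C\int_0^t \bigl(p_N(s)+N^{-1/2}\bigr)\mathrm{d}s,\]
and Gronwall's lemma yields $p_N(t)\leq C_T N^{-1/2}$ on $[0,T]$. Since $\|f^N_t-\bar f_t\|_{TV}\leq 2\,p_N(t)$, this implies (far more than) the weak convergence in the statement.

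The main technical obstacle is the uniform Lipschitz estimate $\|M_{J_1}-M_{J_2}\|_{TV}\leq C\norme{J_1-J_2}$ on the compact convex hull of $SO_3(\R)$: it is ``standard'' but is the only place where compactness of $SO_3(\R)$ is genuinely used, and has to be written carefully so that the constant is effective. A secondary point is to justify rigorously that the ``global rate-$N$ clock plus uniform index'' formulation given in the statement coincides in law with the ``$N$ independent rate-$1$ clocks'' formulation used for the coupling, since the whole argument relies on being able to assign to each particle $i$ and to its nonlinear counterpart $\bar A^{i}$ the \emph{same} Poisson clock.
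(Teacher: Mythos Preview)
The paper does not prove this theorem: it states explicitly that ``the rigorous proofs of the two following theorems can be found in \cite{diez} in a more general framework'' and gives no argument of its own. So there is no in-paper proof to compare your proposal against.

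On its own merits, your Sznitman-type coupling is the standard and correct route for this kind of mean-field jump process, and the sketch is essentially sound. The reformulation ``one global rate-$N$ clock with uniform index'' $\leftrightarrow$ ``$N$ independent rate-$1$ clocks'' is the superposition/thinning theorem for Poisson processes; the Lipschitz bound $\|M_{J_1}-M_{J_2}\|_{TV}\le C\|J_1-J_2\|$ is immediate from smoothness of $J\mapsto e^{J\cdot A}/\mathcal Z(J)$ on the compact convex hull of $SO_3(\R)$; and the Gronwall step is fine.

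One point deserves more care than you give it: you assert that the $\bar A^{i}$ are \emph{independent} copies of the nonlinear process, but the optimal TV coupling you invoke at each ring depends on $J^N_{t^-}$, which involves all particles. If you build the coupled pair $(A^{j,N}_{t^+},\bar A^{j}_{t^+})$ symmetrically from a single external uniform, the $\bar A^j$'s acquire a hidden dependence through $J^N$ and your law-of-large-numbers bound $\E\bigl\|\frac{1}{N}\sum_j \bar A^{j}_t-\E\bar A^{1}_t\bigr\|\le CN^{-1/2}$ is no longer automatic. The fix is to construct the $\bar A^j$'s \emph{first}, from independent clocks and independent jump variables (so they are genuinely i.i.d.\ with law $\bar f_t$), and then realise the optimal coupling conditionally on $\bar A^{j}_{t^+}$: given $V\sim M_{J_{\bar f}}$, set $W=V$ with probability $\min(M_{J^N},M_{J_{\bar f}})(V)/M_{J_{\bar f}}(V)$ and otherwise resample $W$ from the normalised excess $(M_{J^N}-M_{J^N}\wedge M_{J_{\bar f}})$. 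This preserves both marginals, achieves the TV bound on disagreement, and leaves the $\bar A^j$'s independent. With that adjustment the rest of your argument goes through and yields the quantitative rate $O(N^{-1/2})$ in total variation, which is indeed stronger than the weak convergence stated.
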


\subsection{A review of the different kinetic equations} 

The model studied in the present article belongs to a class of models, the study of which has been initiated in \cite{degondmotsch08} as a continuum version of the Vicsek model \cite{vicsek1995novel}. These models can be classified in two types. First, in the Vicsek-type models, the agents are described by their orientation defined as a unit vector in $\Sph^{n-1}$. In the second type of models, we take into account their body-orientation, defined as a rotation matrix in $SO_3(\R)$. Our study enters into this second framework.  \\

The kinetic version of the Vicsek-type or Body-Orientation-type models is given either by a Fokker-Planck equation or by a BGK equation. In this work we will focus on the BGK equation 
\begin{equation}\label{bgk}
\partial_t f+(Ae_1\cdot\nabla_x)f=\rho_f M_{J_f}-f,\end{equation}
where
\[\rho_f(t,x)=\int_{SO_3(\R)} f(t,x,A)\,dA\,\,\,\,\,\,\text{and}\,\,\,\,\,J_f=\int_{SO_3(\R)} f(t,x,A)A\,dA.\]
The Fokker-Planck version of our model corresponds to: 
\begin{equation}\label{FP}\partial_t f +(Ae_1\cdot \nabla_x)f=\nabla_A\cdot\left[M_{J_f}\nabla_A\left(\frac{f}{M_{J_f}}\right)\right],\end{equation}
where $\nabla_A$ and $\nabla_A\cdot$ are respectively the gradient and the divergence in $SO_3(\R)$ for the Riemannian structure detailed in Section \ref{SO3technique}. Apart from the fact that the underlying interaction process \cite{dimarcomotsch16,degondfrouvellemerinotrescasesAlignment18} which leads to the BGK model is different from the one that leads to the Fokker-Planck model, the BGK model is structurally different and can be treated independently by using specific and simpler mathematical techniques presented in the next sections. Nevertheless, the BGK and Fokker-Planck models share important properties. For instance, the following functional is a free-energy for both the spatially homogeneous BGK equation and the spatially homogeneous Fokker-Planck equation (though with a different dissipation term): 
\begin{equation}\label{freeenergy}\mathcal{F}[f]:=\int_{SO_3(\R)} f\log f-\frac{1}{2}|J_f|^2.\end{equation}
It satisfies in both cases: 
\[\frac{d}{dt}\mathcal{F}[f]=-\mathcal{D}[f]\leq0,\]
where $\mathcal{D}[f]$ is the dissipation term which is equal for the BGK model to: 
\[\mathcal{D}[f]=\int_{SO_3(\R)} (f-\rho M_{J_f})(\log f-\log(\rho M_{J_f}))\geq0.\]
In the context of the Vicsek model, this free energy was the key to study the phase transition phenomena \cite{degondfrouvelleliu15} and we believe that the same kind of study can be made in the body-attitude coordination dynamics modelled by a Fokker-Planck equation \eqref{FP}. Moreover, in the Fokker-Planck case this dissipation inequality implies a gradient flow structure in the Wasserstein-2 distance which has been studied (in the Vicsek case) in \cite{figalli2018global}. However, the BGK model has another underlying gradient-flow dynamics (studied in Section \ref{convergence}) on which the present study will be based, and we will therefore not use this free-energy in the present work.\\

Both models (BGK and Fokker-Planck) have a normalised and a non-normalised version. The model \eqref{bgk} will be referred as the non-normalised BGK model. A normalised model is a model where the flux $J_f$ is replaced by the orthogonal part of its polar decomposition $\Lambda_f:=PD(J_f)$ as defined in the introduction and under the assumption that $\det J_f>0$. The normalised Fokker-Planck model is the model studied in \cite{degondfrouvellemerino17}:
\[\partial_t f +(Ae_1\cdot \nabla_x)f=\nabla_A\cdot\left[M_{\Lambda_f}\nabla_A\left(\frac{f}{M_{\Lambda_f}}\right)\right].\]
This terminology comes from the continuum version of the Vicsek model \cite{degondmotsch08} where either the total flux
\[J_f=\int_{\Sph^{n-1}}f(t,x,v)v\,dv,\]
or its normalisation 
\[\Omega_f=\frac{\int_{\Sph^{n-1}} f(t,x,v)v\,dv}{\left|\int_{\Sph^{n-1}} f(t,x,v)v\,dv\right|}\in\Sph^{n-1},\]
is considered. A mathematical analysis of the normalised Vicsek model can be found in \cite{figalli2018global,gamba2016global}. The importance of this distinction in the context of phase transitions has been shown in \cite{degondfrouvelleliu15} and \cite{degondfrouvelleliu13}: phase transitions appear only in non-normalised models. \\

The following chart (Figure \ref{models}) shows the different models and gives references where they are studied (when such references exist).

\begin{figure}[H]
\begin{center}
\begin{tikzpicture}[xscale=1,yscale=0.9]
\tikzstyle{fleche}=[->,>=latex,thick]
\tikzstyle{noeud}=[circle,draw]
\tikzstyle{feuille}=[rectangle,draw]
\tikzstyle{etiquette}=[midway,fill=white]
\def\DistanceInterNiveaux{3}
\def\DistanceInterFeuilles{1.5}
\def\NiveauA{(0)*\DistanceInterNiveaux}
\def\NiveauB{(1.6666666666666665)*\DistanceInterNiveaux}
\def\NiveauC{(3)*\DistanceInterNiveaux}
\def\NiveauD{(4)*\DistanceInterNiveaux}
\def\InterFeuilles{(-1)*\DistanceInterFeuilles}
\node[noeud] (R) at ({\NiveauA},{(3.5)*\InterFeuilles}) {Model};
\node (Ra) at ({\NiveauB},{(1.5)*\InterFeuilles}) {};
\node (Raa) at ({\NiveauC},{(0.5)*\InterFeuilles}) {};
\node[right] (Raaa) at ({\NiveauD},{(0)*\InterFeuilles}) {\cite{degondmotsch08,figalli2018global,gamba2016global}};
\node[right] (Raab) at ({\NiveauD},{(1)*\InterFeuilles}) {\cite{bolley2012mean,degondfrouvelleliu15,degondfrouvelleliu13}};
\node (Rab) at ({\NiveauC},{(2.5)*\InterFeuilles}) {};
\node[right] (Raba) at ({\NiveauD},{(2)*\InterFeuilles}) {\cite{dimarcomotsch16}};
\node[right] (Rabb) at ({\NiveauD},{(3)*\InterFeuilles}) {\textit{In progress}};
\node (Rb) at ({\NiveauB},{(5.5)*\InterFeuilles}) {};
\node (Rba) at ({\NiveauC},{(4.5)*\InterFeuilles}) {};
\node[right] (Rbaa) at ({\NiveauD},{(4)*\InterFeuilles}) {\cite{degondfrouvellemerino17,degondfrouvellemerinotrescasesAlignment18}};
\node[right] (Rbab) at ({\NiveauD},{(5)*\InterFeuilles}) {\textit{In progress}};
\node (Rbb) at ({\NiveauC},{(6.5)*\InterFeuilles}) {};
\node[right] (Rbba) at ({\NiveauD},{(6)*\InterFeuilles}) {\cite{degondfrouvellemerinotrescasesAlignment18}};
\node[right] (Rbbb) at ({\NiveauD},{(7)*\InterFeuilles}) {\textbf{Present work}};
\draw[fleche] (R)--(Ra) node[etiquette] {Vicsek};
\draw[fleche] (Ra)--(Raa) node[etiquette] {Fokker-Planck};
\draw[fleche] (Raa)--(Raaa) node[etiquette] {\scriptsize Normalised};
\draw[fleche] (Raa)--(Raab) node[etiquette] {\scriptsize Non-normalised};
\draw[fleche] (Ra)--(Rab) node[etiquette] {BGK};
\draw[fleche] (Rab)--(Raba) node[etiquette] {\scriptsize Normalised};
\draw[fleche] (Rab)--(Rabb) node[etiquette] {\scriptsize Non-normalised};
\draw[fleche] (R)--(Rb) node[etiquette] {Body-Orientation};
\draw[fleche] (Rb)--(Rba) node[etiquette] {Fokker-Planck};
\draw[fleche] (Rba)--(Rbaa) node[etiquette] {\scriptsize Normalised};
\draw[fleche] (Rba)--(Rbab) node[etiquette] {\scriptsize Non-normalised};
\draw[fleche] (Rb)--(Rbb) node[etiquette] {BGK};
\draw[fleche] (Rbb)--(Rbba) node[etiquette] {\scriptsize Normalised};
\draw[fleche] (Rbb)--(Rbbb) node[etiquette] {\scriptsize Non-normalised};
\end{tikzpicture}
\end{center}
\caption{The map of the different models}
\label{models}
\end{figure}
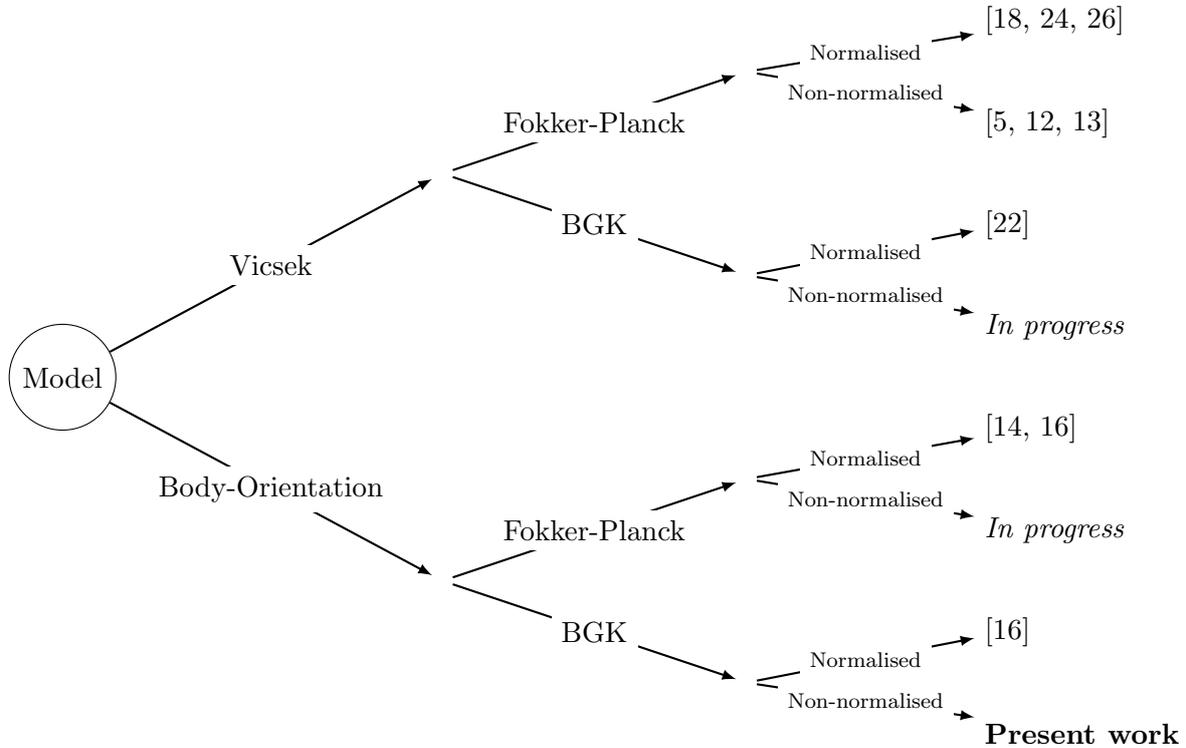

Finally, in Sections \ref{paragraphequilibria} and \ref{convergence}, we will focus on the spatially homogeneous version of the BGK model \eqref{bgk} given by:
\begin{equation}\label{bgkh}\partial_t f = \rho{M_{J_f}}-f,\end{equation}
where the probability distribution $f(t,A)$ only depends on the body-orientation variable and time. In the spatially homogeneous case, the local density of agents previously denoted by $\rho_f$ does not depend on $f$ in the sense that an initial density $\rho_{f_0}\in\R_+$ associated to the initial distribution $f_0$ is preserved by the dynamics: 
\[\forall t\in\R_+,\,\,\,\,\, \rho_f(t)=\rho_{f_0},\]
as it can be seen by integrating the equation over $SO_3(\R)$. We therefore take $\rho\in\R_+$ as a fixed parameter of the problem. Note also that the well-posedness of \eqref{bgkh} directly follows from Duhamel's formula: 
\[f(t)=e^{-t}f_0+\rho\int_0^t e^{-(t-s)}M_{J_{f(s)}}\,ds,\]
since $J_f$ is given as the solution of the following differential equation on $\MM_3(\R)$: 
\[\frac{d}{dt}J_f=\rho\langle A\rangle_{M_{J_f}}-J_f,\,\,\,\,\,\,J_f(t=0)=J_{f_0},\]
as it can be seen by multiplying \eqref{bgkh} by $A$ and integrating over $SO_3(\R)$. Note that it contrasts with the Fokker-Planck case where even the well-posedness of the spatially-homogeneous equation would require further investigations. This will be part of future work. 

\section{Preliminaries: structure and calculus in $SO_n(\R)$}\label{SO3technique}

This paragraph collects the main properties of the Riemannian manifold $SO_n(\R)$ and other technical results. In this paragraph $n\geq3$ denotes the dimension, we will mainly consider the case $n=3$ in the next sections.

\subsection{Structure and Haar measure on $SO_n(\R)$}
\begin{lemma}
The following is an inner product on $\MM_n(\R)$~: 
\begin{equation}\label{produitscalaire}A\cdot B:=\frac{1}{2}\Tr(A^T B),\end{equation}
and the following properties hold:
\begin{enumerate}[$\bullet$]
\item Endowed with this metric, $SO_n(\R)$ is a topological group and a Riemannian manifold.
\item The sets $\mathscr{S}_n(\R)$ and $\mathscr{A}_n(\R)$ of symmetric and skew-symmetric matrices are orthogonal and $\MM_n(\R)=\mathscr{S}_n(\R)\oplus\mathscr{A}_n(\R)$.
\item For $A\in SO_n(\R)$, the tangent space to $SO_n(\R)$ at $A$ is denoted by $T_A$ and 
\[M\in T_A\,\,\,\,\,\text{if and only if there exists}\,\,\,P\in\mathscr{A}_n(\R)\,\,\,\text{such that}\,\,\,M=AP.\]
\end{enumerate}
The norm on $\MM_n(\R)$ associated to the inner product \eqref{produitscalaire} will be denoted by $\|\cdot\|$.
\end{lemma}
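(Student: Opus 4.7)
The plan is to verify the four claims in the order stated, each by a short direct computation, since all of them are classical facts about the Frobenius geometry of $\MM_n(\R)$.

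First I would check that $A\cdot B = \tfrac{1}{2}\Tr(A^T B)$ is an inner product. Bilinearity is immediate from linearity of the trace and transpose. Symmetry follows from $\Tr(A^T B) = \Tr((A^T B)^T) = \Tr(B^T A)$. For positive definiteness I would expand $A\cdot A = \tfrac{1}{2}\sum_{i,j} a_{ij}^2$, which is nonnegative and vanishes only when $A = 0$. Next, for the orthogonal decomposition $\MM_n(\R) = \mathscr{S}_n(\R) \oplus \mathscr{A}_n(\R)$, I would use the standard splitting $M = \tfrac{1}{2}(M + M^T) + \tfrac{1}{2}(M - M^T)$ for existence, and for orthogonality compute, for any $S \in \mathscr{S}_n(\R)$ and $P \in \mathscr{A}_n(\R)$, $2\, S\cdot P = \Tr(S^T P) = \Tr(SP) = -\Tr(SP^T) = -\Tr((PS^T)^T) = -\Tr(PS) = -\Tr(SP)$, whence $S\cdot P = 0$.

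For the Riemannian and topological group structure, I would note that $SO_n(\R)$ is the level set $\{A : A^T A = I_n,\ \det A = 1\}$ of the smooth map $A \mapsto A^T A$ at its regular value $I_n$ (regularity being a short, standard verification), hence a smooth embedded submanifold of the Euclidean space $(\MM_n(\R), \cdot)$; restricting the ambient inner product to each tangent space yields the Riemannian metric. The topological group axioms reduce to continuity of matrix multiplication (a polynomial map) and of inversion, which on $SO_n(\R)$ coincides with transposition and is therefore linear, hence continuous.

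Finally, for the tangent space description, I would differentiate the defining relation along a smooth curve $t \mapsto A(t) \in SO_n(\R)$ with $A(0) = A$ and $\dot A(0) = M$. From $A(t)^T A(t) = I_n$ I get $M^T A + A^T M = 0$, so $P := A^T M = A^{-1} M$ is skew-symmetric and $M = AP$. Conversely, for any $P \in \mathscr{A}_n(\R)$, the curve $t \mapsto A \exp(tP)$ stays in $SO_n(\R)$ and has velocity $AP$ at $t=0$, showing $AP \in T_A$. A dimension count ($\dim \mathscr{A}_n(\R) = n(n-1)/2 = \dim SO_n(\R)$) confirms that no matrices are missed.

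None of the steps is really an obstacle; the only point requiring any care is showing that $I_n$ is a regular value of $A \mapsto A^T A$ (viewed as a map into $\mathscr{S}_n(\R)$), which is the one computation I would present in a little detail rather than merely invoking.
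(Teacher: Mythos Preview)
Your proof is correct and complete; every step is standard and sound. The paper itself does not prove this lemma at all --- it is stated without proof as a collection of classical preliminary facts about the Frobenius inner product and the Lie group structure of $SO_n(\R)$, so there is nothing to compare against.
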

The general theory of locally compact topological groups ensures the existence of a Haar measure $\mu$ on $SO_n(\R)$ which satisfies for all $P\in SO_n(\R)$ and all Borel set $\mathcal{E}$ of the Borel $\sigma$-algebra of $SO_n(\R)$: 
\[\mu(P\mathcal{E})=\mu(\mathcal{E}P)=\mu(\mathcal{E}),\]
where $P\mathcal{E}=\{PA,\,\,A\in\mathcal{E}\}$ and $\mathcal{E}P=\{AP,\,\,A\in\mathcal{E}\}$. We will assume that $\mu$ is the unique Haar measure which is a probability measure and simply write 
\[\int_{SO_n(\R)} f(A)\,d\mu(A)\equiv\int_{SO_n(\R)} f(A)\,dA.\]
As a consequence if $P\in SO_n(\R)$, $A\mapsto PA$ and $A\mapsto AP$ are two changes of variable with unit Jacobian. We will constantly use the following changes of variable : 
\begin{definition}[Useful changes of variable]\label{changeofvariable}
Let us define the following matrices:
\begin{enumerate}[$\bullet$]
\item For $i\ne j\in\{1,\ldots,n\}$, $D^{ij}\in SO_n(\R)$ is the diagonal matrix such that all its coefficients are equal to $1$ except at positions $i$ and $j$ where they are equal to $-1$. 
\item For $i\ne j\in\{1,\ldots,n\}$, $P^{ij}\in SO_n(\R)$ is the matrix such that $P^{ij}_{ii}=P^{ij}_{jj}=0$, $P^{ij}_{kk}=1$ for $k\ne i,j$, $P^{ij}_{ij}=1$ and $P^{ij}_{ji}=-1$. The other coefficients are equal to 0.
\end{enumerate}
Then we define the following changes of variable with unit Jacobian:
\begin{enumerate}[$\bullet$]
\item $A'=D^{ij}A$ multiplies the rows $i$ and $j$ by $-1$. Everything else remains unchanged.
\item $A'=AD^{ij}$ multiplies the columns $i$ and $j$ by $-1$. Everything else remains unchanged. 
\item $A'=D^{ij} AD^{ij}$ multiplies the elements $(k,i)$, $(k,j)$ and $(i,k)$, $(j,k)$ by $-1$ for $k\ne i,j$. Everything else remains unchanged
\item $A'=P^{ij}A$ multiplies row $i$ by $-1$ and permutes the rows $i$ and $j$. 
\item $A'=P^{ij} A(P^{ij})^T$ exchanges the diagonal coefficients $(i,i)$ and $(j,j)$ (and involves other changes). 
\end{enumerate}
\end{definition}
The two following lemmas are important applications of these results.
\begin{lemma}\label{diagonal}
Let $D\in\MM_n(\R)$ be a diagonal matrix and $M_D$ the von Mises distribution with parameter $D$, then
\[\langle A\rangle_{M_D}:=\int_{SO_3(\R)}\,AM_D(A)\,dA\]
is diagonal.
\end{lemma}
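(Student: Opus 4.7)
The approach is to show that every off-diagonal entry of $\langle A\rangle_{M_D}$ vanishes, by exhibiting for each pair $i\neq j$ a measure-preserving change of variable on $SO_n(\R)$ that fixes the density $M_D$ pointwise but negates the coordinate function $A\mapsto a_{ij}$.

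The starting observation is that, since $D$ is diagonal,
\[D\cdot A=\tfrac{1}{2}\Tr(D^T A)=\tfrac{1}{2}\sum_{k=1}^{n} d_{kk}\,a_{kk},\]
depends on $A$ only through its diagonal entries. Consequently any bijection of $SO_n(\R)$ which has unit Jacobian for the Haar measure and which leaves each $a_{kk}$ fixed will automatically leave $A\mapsto M_D(A)$ unchanged.

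To exploit this, I would fix indices $i\neq j$, pick a third index $l\in\{1,\dots,n\}\setminus\{i,j\}$ (available because $n\geq 3$), and use the change of variable $A=D^{il}\,B\,D^{il}$ from Definition \ref{changeofvariable}. By the left and right invariance of the Haar measure, this is a bijection of $SO_n(\R)$ with unit Jacobian. A direct inspection of entries shows that $(D^{il}BD^{il})_{mp}$ differs from $B_{mp}$ by a sign exactly when one of $m,p$ lies in $\{i,l\}$ and the other does not. In particular every diagonal entry is preserved (both indices are equal, hence simultaneously in or out of $\{i,l\}$), while the $(i,j)$-entry is negated because $i\in\{i,l\}$ but $j\notin\{i,l\}$.

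Combining these ingredients gives
\[\int_{SO_n(\R)} a_{ij}\,M_D(A)\,dA=\int_{SO_n(\R)}(-B_{ij})\,M_D(B)\,dB=-\int_{SO_n(\R)} a_{ij}\,M_D(A)\,dA,\]
so the integral must vanish, which is exactly the claim. I do not foresee a genuine obstacle: the whole argument is a one-line symmetry trick, and the only small subtlety is choosing the auxiliary index $l$ so that conjugation by $D^{il}$ flips the sign of $a_{ij}$ while simultaneously fixing every diagonal entry. This choice is available precisely because $n\geq 3$, which is the standing assumption in this section.
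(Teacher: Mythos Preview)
Your proof is correct and is essentially identical to the paper's own argument: the paper also fixes $k\neq\ell$, picks $m\notin\{k,\ell\}$, and uses the conjugation $A\mapsto D^{km}AD^{km}$ to flip the sign of $a_{k\ell}$ while preserving $D\cdot A$ (equivalently, all diagonal entries), concluding that the off-diagonal integral equals its own negative. Your write-up simply spells out in more detail why the diagonal entries are fixed and why the $(i,j)$-entry flips, but the change of variable and the logic are the same.
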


\begin{proof}
Let $k\ne\ell$ and $m\ne k,\ell$. The change of variable $A\mapsto D^{km}AD^{km}$ gives: 
\[\int_{SO_n(\R)} a_{k,\ell}\,e^{D\cdot A}\,dA=-\int_{SO_3(\R)} a_{k,\ell}\,e^{D\cdot A}\,dA=0,\]
where we have used that $D^{k,m}DD^{k,m}=D$.
\end{proof}

\begin{lemma}\label{JAA} For any $n\geq3$ and any $J\in\MM_n(\R)$, 
\[\int_{SO_n(\R)} (J\cdot A)A\,dA= \frac{1}{2n} J.\]
\end{lemma}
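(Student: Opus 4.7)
The plan is to compute the integral entry by entry. Expanding the inner product, $J\cdot A=\tfrac12\sum_{k,\ell}J_{k\ell}\,a_{k\ell}$, so the $(i,j)$-entry of $\int_{SO_n(\R)}(J\cdot A)\,A\,dA$ equals $\tfrac12\sum_{k,\ell}J_{k\ell}\,I_{k\ell,ij}$, where
\[I_{k\ell,ij}:=\int_{SO_n(\R)} a_{k\ell}\,a_{ij}\,dA.\]
It therefore suffices to show that $I_{k\ell,ij}=\tfrac1n\,\delta_{ki}\,\delta_{\ell j}$, since then the $(i,j)$-entry collapses to $\tfrac{1}{2n}J_{ij}$, which is the claimed identity.

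My first step would be to establish the vanishing of $I_{k\ell,ij}$ whenever $(k,\ell)\neq(i,j)$, by exhibiting a change of variable with unit Jacobian that flips the sign of exactly one of the two factors. If $i\neq k$, the hypothesis $n\geq 3$ lets me pick some $m\in\{1,\dots,n\}\setminus\{i,k\}$, and the substitution $A\mapsto D^{im}A$ from Definition \ref{changeofvariable} negates rows $i$ and $m$ simultaneously: this flips $a_{ij}$ but leaves $a_{k\ell}$ unchanged, forcing $I_{k\ell,ij}=-I_{k\ell,ij}=0$. If instead $i=k$ but $j\neq\ell$, the symmetric substitution $A\mapsto AD^{jm}$ with $m\in\{1,\dots,n\}\setminus\{j,\ell\}$ negates columns $j$ and $m$ and handles the remaining case.

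For the diagonal entries, I would first show that the value $I_{ij,ij}$ is independent of $(i,j)$: the change of variable $A\mapsto P^{ik}A$ sends $a_{ij}$ to $\pm a_{kj}$ (squaring to the same value), giving $I_{ij,ij}=I_{kj,kj}$, and a column swap $A\mapsto AP^{j\ell}$ then yields $I_{kj,kj}=I_{k\ell,k\ell}$. On the other hand, the defining identity $A^TA=I_n$ on $SO_n(\R)$ gives the pointwise relation $\sum_{i,j}a_{ij}^2=\Tr(A^TA)=n$; integrating and invoking the equality of all $n^2$ diagonal integrals forces $I_{ij,ij}=\tfrac1n$. Substituting both facts into the componentwise expression completes the proof. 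The main obstacle is purely bookkeeping: checking that the auxiliary matrices $D^{im}$ and $P^{ij}$ actually lie in $SO_n(\R)$ (the choice of pairs of indices in Definition \ref{changeofvariable} is precisely what makes the determinants equal to $+1$), and making visible where the hypothesis $n\geq 3$ is used---namely in guaranteeing the existence of the third index $m$ needed in the vanishing step.
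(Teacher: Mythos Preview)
Your proof is correct and complete; the entrywise computation via the sign-flip changes of variable $A\mapsto D^{im}A$ and $A\mapsto AD^{jm}$ handles the off-diagonal vanishing cleanly, and the permutation-plus-trace argument pins down the common diagonal value.

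The paper argues differently. It views $\Phi(J):=\int_{SO_n(\R)}(J\cdot A)A\,dA$ as a linear endomorphism of $\MM_n(\R)$ and observes that bi-invariance of the Haar measure forces $\Phi(P)=P\Phi(I_n)=\Phi(I_n)P$ for every $P\in SO_n(\R)$; since any matrix commuting with all of $SO_n(\R)$ is scalar (Corollary~\ref{commute}, which in turn rests on Lemma~\ref{spanson} showing $\Span(SO_n(\R))=\MM_n(\R)$), this gives $\Phi(I_n)=\lambda I_n$ and then $\Phi=\lambda\,\mathrm{Id}$ by linearity and the same span lemma. The constant $\lambda$ is then fixed exactly as you do, by summing $\int a_{ij}^2\,dA$ over all $i,j$. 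Your route is more elementary---it avoids the auxiliary span and commutation lemmas and never leaves the level of individual matrix entries---while the paper's structural argument scales more gracefully to the weighted variant in Lemma~\ref{JAAg}, where one wants to decompose $\MM_n(\R)$ into invariant subspaces rather than chase individual coefficients.
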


\begin{lemma}\label{JAAg}
Let $n\geq3$, $n\ne4$. Let $g:SO_n(\R)\to\R$ such that for all $A,P\in SO_n(\R)$, $g(A)=g(A^T)~=~g(PAP^T)$. For all $J\in\MM_n(\R)$ we have: 
\[\int_{SO_n(\R)} (J\cdot A)A\,g(A)\,dA= a\Tr(J) I_n+b J+cJ^T,\]
for given $a, b, c\in\R$ depending on $g$ and on the dimension, the expressions of which can be found in the proof.
\end{lemma}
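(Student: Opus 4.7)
The plan is to view the linear map $T:\MM_n(\R)\to\MM_n(\R)$ defined by
\[T(J):=\int_{SO_n(\R)}(J\cdot A)\,A\,g(A)\,dA\]
as an $SO_n(\R)$-equivariant endomorphism for the conjugation action, and then to apply Schur's lemma together with the decomposition of $\MM_n(\R)$ into irreducible summands.

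First I would establish the equivariance $T(PJP^T)=P\,T(J)\,P^T$ for every $P\in SO_n(\R)$. Using the change of variable $A\mapsto PAP^T$ (which has unit Jacobian as the composition of left and right multiplications by elements of $SO_n(\R)$), the hypothesis $g(PAP^T)=g(A)$, and the identity $J\cdot(PAP^T)=(P^TJP)\cdot A$, a one-line calculation produces $T(J)=P\,T(P^TJP)\,P^T$, which is precisely the claimed equivariance.

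Next I would invoke the orthogonal decomposition $\MM_n(\R)=\R I_n\oplus\SS_n^0(\R)\oplus\AA_n(\R)$, where $\SS_n^0(\R)$ denotes the traceless symmetric matrices. For $n\geq 3$ with $n\neq 4$, each of these three subspaces is absolutely irreducible as an $SO_n$-module under the adjoint action, and they are pairwise non-isomorphic (their dimensions $1$, $\frac{n(n+1)}{2}-1$ and $\frac{n(n-1)}{2}$ being pairwise distinct). By Schur's lemma, $T$ must act on each component as a scalar $\lambda_1$, $\lambda_2$, $\lambda_3$. Decomposing an arbitrary $J$ as $\frac{\Tr J}{n}I_n+\bigl(\frac{J+J^T}{2}-\frac{\Tr J}{n}I_n\bigr)+\frac{J-J^T}{2}$ and regrouping then gives exactly the stated form $T(J)=a\,\Tr(J)I_n+b\,J+c\,J^T$ with
\[a=\frac{\lambda_1-\lambda_2}{n},\qquad b=\frac{\lambda_2+\lambda_3}{2},\qquad c=\frac{\lambda_2-\lambda_3}{2}.\]
The explicit integral expressions for the three $\lambda_i$ (hence for $a,b,c$) would then be obtained by plugging in three well-chosen test matrices, e.g.\ $J=I_n$ to read off $\lambda_1$ via the trace, $J=E_{12}+E_{21}$ to read off $\lambda_2$, and $J=E_{12}-E_{21}$ to read off $\lambda_3$, each time using Lemma~\ref{diagonal}-style sign-change arguments coming from Definition~\ref{changeofvariable} to discard off-diagonal contributions.

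The main obstacle lies in the irreducibility claim, and this is precisely where the hypothesis $n\neq 4$ becomes essential: when $n=4$, the six-dimensional module $\AA_4(\R)\cong\Lambda^2(\R^4)$ splits further as the direct sum of the self-dual and anti-self-dual two-forms (the $\pm 1$ eigenspaces of the Hodge star), so Schur's lemma would allow two independent scalars on $\AA_4(\R)$ and the clean form $bJ+cJ^T$ would no longer be forced. As for the extra hypothesis $g(A)=g(A^T)$, it corresponds at the level of $T$ to the identity $T(J)=T(J^T)^T$, which is automatically satisfied by any operator of the form $a\,\Tr(J)I_n+bJ+cJ^T$; thus it imposes no additional constraint on the structural conclusion and serves only, if at all, to simplify the explicit integral formulas for the $\lambda_i$.
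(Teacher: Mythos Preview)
Your proposal is correct and takes a genuinely different, more representation-theoretic route than the paper. The paper proceeds by hand: it first shows the map $\psi$ is self-adjoint, then uses the hypothesis $g(A)=g(A^T)$ to deduce $\psi(J^T)=\psi(J)^T$ and hence stability of the symmetric and skew-symmetric subspaces, and finally reduces via the spectral theorem (resp.\ the block-diagonal normal form for skew-symmetric matrices) to a computation on diagonal (resp.\ block-diagonal) matrices where explicit changes of variable from Definition~\ref{changeofvariable} pin down the scalars one at a time; the restriction $n\neq4$ appears only at the very end of the skew-symmetric calculation, when a suitable auxiliary index fails to exist. You instead invoke Schur's lemma directly on the irreducible decomposition $\MM_n(\R)=\R I_n\oplus\SS_n^0(\R)\oplus\AA_n(\R)$ under conjugation, which is cleaner and makes the role of $n\neq4$ completely transparent through the splitting $\Lambda^2\R^4=\Lambda^2_+\oplus\Lambda^2_-$. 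Your remark that $g(A)=g(A^T)$ is not needed for the structural conclusion is correct and slightly sharper than the paper: there that hypothesis is used as a shortcut to obtain the symmetric/skew-symmetric stability, whereas in your argument this stability is automatic because the three summands are pairwise non-isomorphic irreducibles. The trade-off is that the paper's approach is fully elementary and self-contained (no appeal to the irreducibility of $\SS_n^0$ and $\mathfrak{so}_n$ as $SO_n$-modules), and it delivers the explicit integral expressions for $a,b,c$ as a by-product rather than requiring a separate evaluation step.
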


The proof of these lemmas and other technical results about $SO_3(\R)$ and $SO_n(\R)$ are postponed to Appendix \ref{moreonson}.

\subsection{Volume forms in $SO_3(\R)$}\label{volumeforms}

When an explicit calculation will be needed, we will use one of the two following parametrisations of $SO_3(\R)$ which give two explicit expressions of the normalised Haar measure in dimension 3. 
\begin{enumerate}[$\bullet$]
\item To a matrix $A\in SO_3(\R)$ there is an associated angle $\theta\in[0,\pi]$ and a vector $\mathbf{n}\in \Sph^2$ such that $A$ is the rotation of angle $\theta$ around the axis $\mathbf{n}$. Rodrigues' formula gives a representation of $A$ knowing $\theta$ and $\mathbf{n}=(n_1,n_2,n_3)$~: 
\begin{equation}\label{rodrigue}
A=A(\theta,\mathbf{n})=I_3+ \sin\theta[\mathbf{n}]_\times+(1-\cos\theta)[\mathbf{n}]_\times^2=\exp(\theta[\mathbf{n}]_\times),
\end{equation}
where 
\[[\mathbf{n}]_\times:=\left(\begin{array}{ccc}
 0 & -n_3 & n_2 \\ 
 n_3 &0 & -n_1 \\ 
 -n_2 & n_1 & 0
 \end{array}\right),\]
 and we have: 
 \[[\mathbf{n}]_\times^2 = \mathbf{n}\otimes\mathbf{n}-I_3.\]
If $f(A(\theta,\mathbf{n}))=\bar{f}(\theta,\mathbf{n})$ the volume form of $SO_3(\R)$ is given by: 
\[\int_{SO_3(\R)} f(A)\,dA= \frac{2}{\pi}\int_0^\pi \sin^2(\theta/2)\int_{\Sph^2} \bar{f}(\theta,\mathbf{n})\,d\mathbf{n}\,d\theta.\]
With the usual parametrisation of the sphere $\Sph^2$ we can take $\mathbf{n}=(n_1,n_2,n_3)^T$ with 
\[\left\{\begin{array}{rcl}
n_1&=&\sin\psi\cos\varphi, \\
n_2&=&\sin\psi\sin\varphi, \\ 
n_3&=& \cos\psi,
\end{array}\right.
\]
where $\psi\in[0,\pi]$ and $\varphi\in[0,2\pi]$. The volume form for the sphere is given by: 
\[d\mathbf{n}=\frac{1}{4\pi}\sin\psi d\psi d\varphi.\]

\item We have the following one to one map : 
\begin{equation}\label{SOSphere}\Psi :\left|\begin{array}{rcl}
 SO_{2}(\R)\times \Sph^{2}&\longrightarrow& SO_3(\R)\\
 (A,p)&\longmapsto& M(p)A^a
 \end{array}\right.
 \end{equation}
where
\[A^a:=\left(\begin{array}{cc}A & 0 \\ 0 & 1\end{array}\right)\in SO_3(\R),\] 
and for $p=(\sin\phi_{1}\,\sin\phi_{2},\cos\phi_{1}\,\sin\phi_{2},\cos{\phi_2})^T$ in spherical coordinates $\phi_{1}\in[0,2\pi]$ and $\phi_2\in[0,\pi]$, we define:
\[M(p):=\left(\begin{array}{ccc}
\cos\phi_{1} & \sin\phi_{1}\,\cos\phi_{2} & \sin\phi_{1}\,\sin\phi_{2} \\ 
-\sin\phi_{1} & \cos\phi_{1}\,\cos\phi_{2} & \cos\phi_{1}\,\sin\phi_{2} \\ 
0 & -\sin\phi_{2} & \cos{\phi_2}\end{array}\right)\in SO_3(\R).
\]
The matrix $A^a$ performs an arbitrary rotation of the first $2$ coordinates and the matrix $M(p)\in SO_3(\R)$ maps the vector $e_3$ to $p\in\Sph^{2}$. A matrix $A\in SO_3(\R)$ can thus be written as the product: 
\[\left(\begin{array}{ccc}
\cos\phi_{1} & \sin\phi_{1}\,\cos\phi_{2} & \sin\phi_{1}\,\sin\phi_{2} \\ 
-\sin\phi_{1} & \cos\phi_{1}\,\cos\phi_{2} & \cos\phi_{1}\,\sin\phi_{2} \\ 
0 & -\sin\phi_{2} & \cos{\phi_2}\end{array}\right)
\left(\begin{array}{ccc}
\cos\theta & \sin\theta & 0 \\ 
-\sin\theta & \cos\theta & 0 \\ 
0 & 0 & 1
\end{array}\right)\]
where $\phi_{1},\theta\in[0,2\pi]$ and $\phi_2\in[0,\pi]$. With this parametrisation: 
\begin{equation}\label{parametrisationSOSphere}\int_{SO_3(\R)}f(A)dA= \frac{1}{2\pi}\int_{0}^{2\pi} \int_{\Sph^2} f\left(M(p)\left(\begin{array}{ccc}
\cos\theta & \sin\theta & 0 \\ 
-\sin\theta & \cos\theta & 0 \\ 
0 & 0 & 1
\end{array}\right)\right)\,d\theta\,dp,\end{equation}
and the volume form on the sphere is given by: 
\[dp=\frac{1}{4\pi}\sin\phi_2\,d\phi_1\,d\phi_2.\]
\end{enumerate}
This parametrisation can be extended in any dimension and comes from the Lie groups quotient:
\[\frac{SO_n(\R)}{SO_{n-1}(\R)}\cong\Sph^{n-1}.\]

\subsection{Singular Value Decomposition (SVD)}

We recall the following classical result proved in \cite[Section 1.9]{quarteroni2010numerical}.
\begin{proposition}[Singular Value Decomposition, SVD]\label{svd} Any square matrix $M\in\MM_n(\R)$ can be written: 
\[M=PDQ\]
where $P,Q\in \OO_n(\R)$ and $D$ diagonal with nonnegative coefficients listed in decreasing order.
\end{proposition}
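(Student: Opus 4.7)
The plan is to reduce the statement to the spectral theorem for symmetric positive semi-definite matrices applied to $S := M^T M$. First I would observe that $S$ is symmetric and positive semi-definite, since $x^T S x = \|Mx\|^2 \geq 0$ for every $x\in\R^n$. By the real spectral theorem, there exists an orthogonal matrix $V\in\OO_n(\R)$ such that $V^T S V = \Lambda$ is diagonal, and up to permuting the eigenvectors (i.e.\ multiplying $V$ by a permutation matrix on the right) we may assume that the diagonal entries of $\Lambda$ are listed in decreasing order: $\Lambda = \diag(\sigma_1^2,\ldots,\sigma_n^2)$ with $\sigma_1\geq\cdots\geq\sigma_n\geq 0$. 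Setting $D := \diag(\sigma_1,\ldots,\sigma_n)$ gives a diagonal matrix with nonnegative entries in decreasing order, which will be the desired middle factor.

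Next I would construct the left orthogonal factor. Let $v_1,\ldots,v_n$ be the columns of $V$ and let $r$ be the largest index with $\sigma_r>0$. For $i\leq r$, define $u_i := \sigma_i^{-1} M v_i$. These vectors form an orthonormal family, because
\begin{equation*}
u_i\cdot u_j \;=\; \frac{1}{\sigma_i\sigma_j}\, v_i^T M^T M v_j \;=\; \frac{1}{\sigma_i\sigma_j}\,v_i^T S v_j \;=\; \frac{\sigma_j^2}{\sigma_i\sigma_j}\,\delta_{ij} \;=\; \delta_{ij}.
\end{equation*}
I would then extend $\{u_1,\ldots,u_r\}$ by Gram--Schmidt (or simply by completing to a basis and orthonormalising) into an orthonormal basis $u_1,\ldots,u_n$ of $\R^n$, and let $U\in\OO_n(\R)$ be the matrix whose columns are the $u_i$.

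Finally I would check the factorisation. By construction $M v_i = \sigma_i u_i$ for $i\leq r$. For $i>r$ we have $\|M v_i\|^2 = v_i^T S v_i = 0$, hence $M v_i = 0 = \sigma_i u_i$. In both cases $M v_i = \sigma_i u_i$, so $M V = U D$ and therefore
\begin{equation*}
M \;=\; U D V^T.
\end{equation*}
Setting $P := U\in\OO_n(\R)$ and $Q := V^T\in\OO_n(\R)$ yields the claim.

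The only delicate point is the possible degeneracy of the singular values: when some $\sigma_i$ vanish, the vectors $u_i$ cannot be defined by the formula $\sigma_i^{-1} M v_i$, and when several $\sigma_i$ coincide the ordering of the eigenvectors is not canonical. Both issues are benign, however: in the first case the relation $M v_i = \sigma_i u_i$ holds trivially for any unit vector $u_i$, so completing the orthonormal family freely is enough; in the second case any orthonormal basis of the corresponding eigenspace of $S$ works, and the decreasing order of the $\sigma_i$ can always be enforced by a preliminary permutation of the eigenvectors. This is the step requiring the most care, but it presents no real obstacle.
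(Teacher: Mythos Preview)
Your proof is correct and is the standard constructive argument via the spectral decomposition of $M^T M$. Note, however, that the paper does not actually prove this proposition: it is quoted as a classical result with a reference to \cite[Section 1.9]{quarteroni2010numerical}. So there is nothing to compare against; your argument simply supplies the details that the paper delegates to the literature.
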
 

In order to use the properties of the Haar measure, we will need the matrices $P$ and $Q$ to belong to $SO_3(\R)$ (not only $\OO_3(\R))$ and we define therefore another decomposition, called the Special Singular Value Decomposition (SSVD) in the following.

\begin{definition}[SSVD in $SO_3(\R)$]\label{ssvd} Let $M\in\MM_3(\R)$. A \textit{Special Singular Value Decomposition} (SSVD) of $M$ is a decomposition of the form 
\[M=PDQ\]
where $P,Q\in SO_3(\R)$ and $D=\diag(d_1,d_2,d_3)$ with 
\[d_1\geq d_2\geq|d_3|.\]
\end{definition}
The existence of a SSVD follows from Proposition \ref{svd}. Let us start from a SVD
\[M=P'D'Q'.\]
\begin{enumerate}[$\bullet$]
\item If $\det M>0$, either $P',Q'\in SO_3(\R)$ and the SVD is a SSVD or $P',Q'$ have both negative determinant and in this case we can take 
\[P=P'\tilde{D},\,\,\,\,Q=\tilde{D}Q'\,\,\,\,\text{and}\,\,\,\,D=D'\]
where $\tilde{D}=\diag(1,1,-1)$.
\item If $\det M<0$, either $P'\in SO_3(\R)$ or $Q'\in SO_3(\R)$ (only one of them). Assume without loss of generality that $Q'\in SO_3(\R)$. Then we can take: 
\[P=P'\tilde{D},\,\,\,\, D=\tilde{D}D\,\,\,\,\text{and}\,\,\,\,Q=Q'.\]
\item If $\det M=0$, then the last coefficient of $D'$ is equal to 0 so $\tilde{D}D'=D'$ and $D'\tilde{D}=D'$. We can take $D=D'$. If $P'\notin SO_3(\R)$ we can take $P=P'\tilde{D}$ and if $Q'\notin SO_3(\R)$ we can take $Q=\tilde{D}Q'$.
\end{enumerate}

\begin{rem} As for the polar decomposition and the standard SVD, the matrix $D$ is always unique. However the matrices $P$ and $Q$ may not be unique.
\end{rem}

The subset $\mathscr{D}\subset\MM_3(\R)$ of the diagonal matrices which are the diagonal part of a SSVD is the cone delimited by the image by the isomorphism $\diag$ of the three planes $\{d_1=d_2\}$, $\{d_2=d_3\}$ and $\{d_2=-d_3\}$ in $\R^3$ and depicted in Figure \ref{dessinequilibres}~:
\begin{equation}\label{diagssvd}D=\diag(d_1,d_2,d_3)\in\mathscr{D}\,\,\,\text{if and only if}\,\,\,d_1\geq d_2\geq |d_3|.\end{equation}

\section{Equilibria of the BGK operator}\label{paragraphequilibria}

In this section we determine the equilibria for the BGK operator:
\begin{equation}\label{bgkoperator}
Q_{BGK}(f):=\rho M_{J_f}-f,
\end{equation}
that is to say the distributions $f$ such that $Q_{BGK}(f)=0$. In Section \ref{characequilibria} we characterise these equilibria (Theorem \ref{solutionscompatibility}) and show that for them to exist, compatibility equations must be fulfilled. These compatibility equations depend on the density $\rho$. Therefore, for different values of the density $\rho$, there exists different equilibria. These will be determined in Section \ref{paragraphdiagonalsolutions} by studying the compatibility equations. A full description of the equilibria of the BGK operator is finally given in Corollary \ref{phasediagramcorollary}.

\subsection{Characterisation of the equilibria and compatibility equations}\label{characequilibria}
The main result of this section is Theorem \ref{solutionscompatibility} which gives all the equilibria of the BGK operator \eqref{bgkoperator}. Before stating and proving it we will need the following lemma which is the analog of lemma 4.4 in \cite{degondfrouvellemerino17}. The proof of this lemma is an application of the results presented in Section \ref{SO3technique}.

\begin{lemma}[Consistency relations]\label{c1c2} The following holds:\begin{enumerate}[(i)]
\item There exists a function $c_1=c_1(\alpha)$ defined for all $\alpha\in\R$ such that for all $\Lambda\in SO_3(\R)$, 
\begin{equation}\label{defc1}c_1(\alpha)\Lambda=\langle A\rangle_{M_{\alpha\Lambda}}.\end{equation}
The function $c_1$ can be explicitly written $c_1(\alpha)=\frac{1}{3}\big\{ (2\cos\theta+1)\big\}_{\alpha}$ where $\{\cdot\}_\alpha$ denotes the mean with respect to the probability density 
\begin{equation}\label{PDF1}\theta\in[0,\pi]\longmapsto \frac{\sin^2(\theta/2) e^{\alpha\cos\theta}}{\int_0^\pi\sin^2(\theta'/2) e^{\alpha\cos\theta'}\,d\theta'}.\end{equation}
\item Consider the set $\mathscr{B}\subset \MM_3(\R)$ defined by:
\[\mathscr{B}:=\left\{B=P\left(\begin{array}{ccc}1 &  &  \\ & 0 &  \\ &  & 0\end{array}\right)Q,\,\,\,\,\,P,Q\in SO_3(\R)\right\}=\{p\otimes q,\,\,\,p,q\in\Sph^2\}.\]
There exists a function $c_2=c_2(\alpha)$ defined for all $\alpha\in\R$ such that for all $B\in\mathscr{B}$, 
\begin{equation}\label{defc2}c_2(\alpha)B=\langle A\rangle_{M_{\alpha B}}.\end{equation}
The function $c_2$ can be explicitly written: $c_2(\alpha)=[ \cos\phi]_\alpha$, where $[\cdot]_\alpha$ denotes the mean with respect to the probability density 
\begin{equation}\label{PDF2}\varphi\in[0,\pi]\,\longmapsto \frac{\sin\varphi\,e^{\frac{\alpha}{2}\cos\varphi}}{\int_0^\pi \sin\varphi'\,e^{\frac{\alpha}{2}\cos\varphi'}\,d\varphi'}.\end{equation}
\end{enumerate}
\end{lemma}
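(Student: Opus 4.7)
The plan is to exploit the bi-invariance of the Haar measure on $SO_3(\R)$ to reduce each identity to the computation of a single scalar integral, and then to evaluate that integral using the explicit volume forms of Section \ref{volumeforms}.

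\textbf{Part (i).} For $\Lambda\in SO_3(\R)$, I would apply the change of variable $A=\Lambda A'$, which has unit Jacobian and transforms $\alpha\Lambda\cdot A$ into $\alpha I_3\cdot A'$. This gives $\langle A\rangle_{M_{\alpha\Lambda}}=\Lambda\,\langle A'\rangle_{M_{\alpha I_3}}$ and reduces matters to showing $\langle A\rangle_{M_{\alpha I_3}}=c_1(\alpha)I_3$. Since $\alpha I_3$ is diagonal, Lemma \ref{diagonal} ensures this average is diagonal; conjugation by the permutation matrices $P^{ij}$ (which fix $I_3$) then forces all diagonal entries to coincide. Taking the trace gives $c_1(\alpha) = \tfrac{1}{3}\langle \Tr A\rangle_{M_{\alpha I_3}}$, and Rodrigues' formula yields $\Tr A = 1+2\cos\theta$, so the density depends only on the angle $\theta$. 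The axis $\mathbf{n}\in\Sph^2$ then integrates out trivially against the volume form $\tfrac{2}{\pi}\sin^2(\theta/2)\,d\theta\,d\mathbf{n}$, producing the announced expression for $c_1(\alpha)$.

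\textbf{Part (ii).} I would factor any $B=p\otimes q\in\mathscr{B}$ as $B = P\,\diag(1,0,0)\,Q$, with $P,Q\in SO_3(\R)$ chosen so that $Pe_1 = p$ and $Qq = e_1$ (possible by transitivity of the $SO_3(\R)$-action on $\Sph^2$). The identity $B\cdot A = \diag(1,0,0)\cdot(P^T A Q^T)$ combined with the change of variable $A'=P^T A Q^T$ reduces the claim to the case $B=\diag(1,0,0)$: $\langle A\rangle_{M_{\alpha B}} = P\,\langle A'\rangle_{M_{\alpha\diag(1,0,0)}}\,Q$. This latter average is diagonal by Lemma \ref{diagonal}; moreover, the change of variable $A\mapsto D^{23}A$ preserves the density $e^{\alpha a_{11}/2}$ but flips the sign of $a_{22}$ and $a_{33}$, so their expectations vanish. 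This yields $\langle A'\rangle_{M_{\alpha\diag(1,0,0)}} = c_2(\alpha)\,\diag(1,0,0)$ with $c_2(\alpha) = \langle a_{11}\rangle_{M_{\alpha\diag(1,0,0)}}$, and substituting back gives $\langle A\rangle_{M_{\alpha B}} = c_2(\alpha)B$.

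For the explicit formula, I would use that $a_{11} = e_1\cdot Ae_1$ depends only on the unit vector $Ae_1 \in \Sph^2$, whose pushforward under the Haar measure is the uniform probability on $\Sph^2$. Writing $Ae_1 = (\cos\varphi,\sin\varphi\cos\psi,\sin\varphi\sin\psi)^T$ with $\varphi\in[0,\pi]$ and $\psi\in[0,2\pi]$, the marginal density of $\varphi$ is $\tfrac{1}{2}\sin\varphi$; integrating out $\psi$ in both numerator and denominator then yields $c_2(\alpha) = [\cos\varphi]_\alpha$ against the density \eqref{PDF2}. The main obstacle, in both parts, is choosing the right discrete symmetries to collapse the matrix-valued integral to a scalar one; once these symmetries are identified, the remaining one-dimensional integrals follow directly from the volume forms of Section \ref{volumeforms}.
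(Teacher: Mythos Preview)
Your proof is correct and follows essentially the same strategy as the paper: reduce via bi-invariance of the Haar measure to a canonical diagonal case, use the discrete symmetries of Definition~\ref{changeofvariable} together with Lemma~\ref{diagonal} to collapse the matrix integral to a scalar, and then read off the explicit formula from a parametrisation of $SO_3(\R)$.

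There are two minor tactical differences in part~(ii). First, to kill the $(2,2)$ and $(3,3)$ entries you use the single left-multiplication $A\mapsto D^{23}A$, which is more direct; the paper instead proves the slightly stronger statement that $\langle A\rangle_{M_{\diag(a,b,\pm b)}}$ has the form $\diag(\tilde a,\tilde b,\pm\tilde b)$ via the composite $A\mapsto P^{23}A(P^{23})^T$ followed by $A\mapsto D^{23}A$ (this stronger form is reused later for the invariant-plane argument in Proposition~\ref{invariantplanes}). Second, for the explicit value of $c_2(\alpha)$ you push the Haar measure forward along $A\mapsto Ae_1$ to the uniform measure on~$\Sph^2$, whereas the paper conjugates by $P^{13}$ to move $a_{11}$ to $a_{33}$ and then invokes the parametrisation~\eqref{parametrisationSOSphere}. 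Both routes land on the same one-dimensional integral; yours is arguably cleaner for this lemma, while the paper's keeps everything inside the $SO_2(\R)\times\Sph^2$ chart it also uses elsewhere.
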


\begin{rem} The relevance of the set $\mathscr{B}$ will become apparent in Proposition \ref{necessarycondition}.
\end{rem}

\begin{proof}
\begin{enumerate}[(i)]
\item Using the left invariance of the Haar measure, it is enough to prove the result for $\Lambda= I_3$, since 
\[\langle A\rangle_{M_{\alpha\Lambda}}=\frac{\int_{SO_3(\R)} A e^{\alpha A\cdot \Lambda}\,dA}{\int_{SO_3(\R)} e^{\alpha\Lambda\cdot A}\,dA}=\Lambda\frac{\int_{SO_3(\R)} \Lambda^T A e^{\alpha \Lambda^TA\cdot I_3}\,dA}{\int_{SO_3(\R)} e^{\alpha \Lambda^TA\cdot I_3}\,dA}=\Lambda\langle A\rangle_{M_{\alpha I_3}}.\]
When $\Lambda=I_3$, Lemma \ref{diagonal} first ensures that $\langle A\rangle_{M_{\alpha I_3}}$ is diagonal, then the change of variable $A'=P^{12}A(P^{12})^T$ (see Definition \ref{changeofvariable}) shows that:
\[\langle a_{11}\rangle_{M_{\alpha I_3}}=\langle a_{22}\rangle_{M_{\alpha I_3}}.\]
Proceeding analogously with the other coefficients we have that $\langle A\rangle_{M_{\alpha I_3}}$ is proportional to $I_3$, i.e. there exists $c_1=c_1(\alpha)\in\R$ such that
\begin{equation}\label{c1eqnannexe}
c_1(\alpha)I_3=\langle A\rangle_{\alpha I_3}.
\end{equation}
The parametrisation of $SO_3(\R)$ using Rodrigues' formula \eqref{rodrigue} then gives the explicit expression of $c_1$ by taking the trace in Equation \eqref{c1eqnannexe} and using that for $A=A(\theta,\mathbf{n})$, $\Tr(A)=2\cos\theta+1$.
\item As before, using the left and right invariance of the Haar measure it is enough to prove the result for $B=\diag(1,0,0)$. Now if $D=\diag(a,b,-b)$ for $a,b\in\R$, then the change of variable $A\mapsto P^{23}A(P^{23})^T$ followed by the change of variable $A\mapsto D^{23}A$ (see Definition \ref{changeofvariable}) show that 
\[\int_{SO_3(\R)} a_{22} e^{D\cdot A}\,dA=-\int_{SO_3(\R)} a_{33} e^{D\cdot A}\,dA,\]
which proves with lemma \ref{diagonal} that $\langle A\rangle_{M_D}$ is diagonal of the form $\diag(\tilde{a},\tilde{b},-\tilde{b})$ for $\tilde{a},\tilde{b}\in\R$. Similarly, if $D=\diag(a,b,b)$ then $\langle A\rangle_{M_D}$ is of the form $\diag(\tilde{a},\tilde{b},\tilde{b})$. These two results prove that $\langle A\rangle_{M_{\alpha B}}$ is proportional to $B$, i.e. there exists $c_2=c_2(\alpha)\in\R$ such that \eqref{defc2} holds. 
The parametrisation of $SO_3(\R)$ coming from the isomorphism \eqref{SOSphere} then gives the explicit expression of $c_2$ by taking $B=\diag(1,0,0)$ in Equation \eqref{defc2}. First, using the change of variable $A\mapsto P^{13}A(P^{13})^T$ it holds that, 
\[c_2(\alpha)=\frac{1}{Z}\int_{SO_3(\R)}a_{11}e^{\frac{\alpha}{2}a_{11}}\,dA=\frac{1}{Z}\int_{SO_3(\R)}a_{33}e^{\frac{\alpha}{2}a_{33}}\,dA\]
where
\[Z=\int_{SO_3(\R)}e^{\frac{\alpha}{2}a_{11}}\,dA=\int_{SO_3(\R)}e^{\frac{\alpha}{2}a_{33}}\,dA.\]
Then, using the parametrisation \eqref{parametrisationSOSphere}, it follows that:
\[c_2(\alpha)=\frac{\int_{0}^\pi \cos\varphi \sin\varphi e^{\frac{\alpha}{2}\cos\varphi}\,d\varphi}{\int_{0}^\pi \sin\varphi e^{\frac{\alpha}{2}\cos\varphi}\,d\varphi}.\]
\end{enumerate}
\end{proof}

\begin{rem}
We could alternatively use one of the two parametrisations of $SO_3(\R)$ given in Section \ref{volumeforms} or the quaternion formulation to prove that $\langle A\rangle_{\alpha I_3}$ and $\langle A\rangle_{\alpha B}$ are proportional to $I_3$ and $B$. However, the proof that we have just presented here holds in any dimension (the value of the constants $c_1(\alpha)$ and $c_2(\alpha)$ depends on the dimension but not the form of the matrices) whereas the volume forms and the quaternion formulation strongly depend on the dimension $n=3$.  
\end{rem}

We can now state the main result of this section:
\begin{theorem}[Equilibria for the homogeneous Body-Orientation BGK equation]\label{solutionscompatibility}
Let $\rho~\in~\R_+$ be a given density. The equilibria of the spatially homogeneous BGK equation \eqref{bgkh} are the distributions of the form $f=\rho M_J$ where $J\in\MM_3(\R)$ is a solution of the matrix compatibility equation:
\begin{equation}\label{compatibility}
J=\rho\langle A\rangle_{M_J}.
\end{equation}
The solutions of the compatibility equation \eqref{compatibility} are:
\begin{enumerate}
\item the matrix $J=0$,
\item the matrices of the form $J=\alpha \Lambda$ with $\Lambda\in SO_3(\R)$ and where $\alpha\in\R$ satisfies the scalar compatibility equation 
\begin{equation}\label{compatibilityc1}\alpha=\rho c_1(\alpha),\end{equation}
\item the matrices of the form $J=\alpha B$ where $B\in \mathscr{B}$ and where $\alpha\in\R$ satisfies the scalar compatibility equation 
\begin{equation}\label{compatibilityc2}\alpha=\rho c_2(\alpha),\end{equation}
\end{enumerate}
where the set $\mathscr{B}$ and the functions $c_1$ and $c_2$ are defined in Lemma \ref{c1c2}.
\end{theorem}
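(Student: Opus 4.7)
The plan is to parametrize equilibria by a matrix $J\in\MM_3(\R)$ and reduce the ensuing matrix compatibility equation to a diagonal problem via the SSVD (Definition \ref{ssvd}). An equilibrium of \eqref{bgkh} satisfies $\rho M_{J_f}=f$, so $f$ is necessarily of the form $f=\rho M_J$ with $J=J_f$. Substituting back into the definition of $J_f$ yields $J=\rho\langle A\rangle_{M_J}$, which is the matrix compatibility equation \eqref{compatibility}; conversely, any $J$ solving \eqref{compatibility} produces an equilibrium $f=\rho M_J$. The real work is then to classify the solutions of \eqref{compatibility}.

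I would next write $J=PDQ$ with $P,Q\in SO_3(\R)$ and $D=\diag(d_1,d_2,d_3)\in\mathscr{D}$ in SSVD form. The left and right invariance of the Haar measure, applied through the change of variable $A\mapsto PAQ$, yields
\[
\langle A\rangle_{M_{PDQ}}=P\,\langle A\rangle_{M_D}\,Q,
\]
so \eqref{compatibility} is equivalent to $D=\rho\,\langle A\rangle_{M_D}$. By Lemma \ref{diagonal}, the right-hand side is diagonal, so this reduces to three scalar equations $d_i=\rho\,\langle a_{ii}\rangle_{M_D}$ for $i=1,2,3$.

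The central step is to classify the diagonal matrices $D\in\mathscr{D}$ solving this reduced system. My plan is to exploit the double cover $\HH\to SO_3(\R)$ by unit quaternions (Appendix \ref{quaternionsandrotations}): the diagonal entries $a_{ii}$ become explicit quadratic forms in the quaternion coordinates $(q_0,q_1,q_2,q_3)\in\Sph^3$, and the exponent $D\cdot A$ becomes a linear combination of the variables $\mu_i:=q_i^2$. The compatibility identities then recast as moment identities for the $\mu_i$ under a Gibbs-type probability density on the simplex $\{\sum\mu_i=1,\ \mu_i\geq 0\}$, placing the problem within the framework of the nematic alignment of polymers \cite{wanghoffman08,wangzhou11}. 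A case analysis on the equality patterns of $d_1,d_2,d_3$, combined with the symmetries supplied by the changes of variable in Definition \ref{changeofvariable}, shows that the only admissible diagonal profiles are $D=0$, $D=\alpha I_3$ (with the negative-$\alpha$ case realized in SSVD form by $|\alpha|\diag(1,1,-1)$), and $D=\alpha\diag(1,0,0)$.

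Translating back: $D=0$ gives $J=0$; $D=\alpha I_3$ gives $J=\alpha(PQ)=\alpha\Lambda$ with $\Lambda\in SO_3(\R)$; and $D=\alpha\diag(1,0,0)=\alpha\,e_1e_1^{T}$ gives $J=\alpha\,(Pe_1)\otimes(Q^{T}e_1)=\alpha\,p\otimes q$ with $p,q\in\Sph^2$, i.e.\ $J=\alpha B$ with $B\in\mathscr{B}$. Substituting these forms into \eqref{compatibility} and using the consistency relations \eqref{defc1} and \eqref{defc2} of Lemma \ref{c1c2} immediately yields the scalar compatibility equations \eqref{compatibilityc1} and \eqref{compatibilityc2}. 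The main obstacle is the classification step: one must rule out the generic diagonal profiles (such as $D=\diag(d_1,d_2,0)$ with $d_1>d_2>0$, or $D$ with three distinct singular values) by carefully exploiting the structure of the constrained moment problem on the simplex.
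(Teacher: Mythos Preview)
Your proposal is correct and follows essentially the same route as the paper: reduce \eqref{compatibility} to diagonal matrices via the SSVD and bi-invariance of the Haar measure (this is the paper's Proposition \ref{equivalence} and Corollary \ref{equivalenceSSVD}), then pass through the quaternion double cover to recast the diagonal problem as a moment problem on $\Sph^3$ in the nematic-alignment form of \cite{wanghoffman08} (the paper's Proposition \ref{SO3quaternions} and Theorem \ref{wanghoffman}). The only point to sharpen is that the ``case analysis on equality patterns of $d_1,d_2,d_3$'' is not done directly on the $d_i$: one invokes the Wang--Hoffman result that the four quaternionic weights $s_i$ take at most two distinct values, and then the explicit linear isomorphism $\phi^{-1}$ of Proposition \ref{SO3quaternions} translates each admissible pattern of the $s_i$ into one of the three diagonal types you list.
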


\begin{rem}
Notice that the existence of a non-zero solution for the scalar compatibility equations \eqref{compatibilityc1} and \eqref{compatibilityc2} is not guaranteed for all values of $\rho>0$ . The existence of non-zero solutions for these equations will be explored in Section \ref{paragraphdiagonalsolutions}. They will determine the existence of equilibria for Equation \eqref{bgkh} for a given value of $\rho$ (Corollary \ref{phasediagramcorollary}).
\end{rem}

\begin{rem} The fact that these matrices are solutions of the matrix compatibility equation \eqref{compatibility} follows directly from the consistency relations \eqref{defc1} and \eqref{defc2} as it will be shown in the proof of Theorem \ref{solutionscompatibility}. The main difficulty of the proof is therefore the necessary condition: we will prove that a solution of the matrix compatibility equation \eqref{compatibility} is necessarily of one of the forms listed in Theorem \ref{solutionscompatibility}.\end{rem}

The proof of this theorem will use the two following propositions. The first one and its corollary (Proposition \ref{equivalence} and Corollary \ref{equivalenceSSVD}) show that the compatibility equation \eqref{compatibility} can be reduced to a compatibility equation on diagonal matrices (equation \eqref{compatibilityD}). The second one (Proposition \ref{necessarycondition}) provides a necessary condition for a diagonal matrix to be a solution of \eqref{compatibilityD}. The proof of Proposition \ref{necessarycondition} is deferred to the next section.
\begin{proposition}[Orbital reduction] \label{equivalence}
The following equivalence holds: $J\in\MM_3(\R)$ is a solution of the matrix compatibility equation \eqref{compatibility} if and only if for all $J'\in\Orb(J)$, $J'$ is a solution of the matrix compatibility equation \eqref{compatibility}.
\end{proposition}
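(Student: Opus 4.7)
The plan is to use the bi-invariance of the Haar measure on $SO_3(\R)$ to establish the equivariance identity
\[
\langle A\rangle_{M_{PJQ}} = P\,\langle A\rangle_{M_J}\,Q \qquad \text{for all } P,Q \in SO_3(\R),
\]
from which the proposition follows immediately.

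The central computation is straightforward: for the inner product \eqref{produitscalaire} one has $PJQ \cdot A = \tfrac{1}{2}\Tr(Q^T J^T P^T A) = \tfrac{1}{2}\Tr(J^T(P^T A Q^T)) = J \cdot (P^T A Q^T)$. Performing the change of variable $A' = P^T A Q^T$, i.e.\ $A = P A' Q$, in both the numerator and the normalising denominator defining $\langle A\rangle_{M_{PJQ}}$, and invoking left-right invariance of the Haar measure (Section \ref{SO3technique}, unit Jacobian for both $A \mapsto PA$ and $A \mapsto AQ$), gives
\[
\int_{SO_3(\R)} A\, e^{PJQ\cdot A}\,dA = P\!\left(\int_{SO_3(\R)} A'\, e^{J\cdot A'}\,dA'\right)\!Q,
\]
and the same substitution shows the partition functions coincide: $\int_{SO_3(\R)} e^{PJQ\cdot A}\,dA = \int_{SO_3(\R)} e^{J\cdot A'}\,dA'$. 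Dividing yields the equivariance identity above.

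With this identity in hand, the equivalence is immediate. If $J$ satisfies \eqref{compatibility}, then for any $J' = PJQ \in \Orb(J)$ we compute
\[
\rho\,\langle A\rangle_{M_{J'}} = \rho P\langle A\rangle_{M_J} Q = P(\rho\langle A\rangle_{M_J})Q = P J Q = J',
\]
so $J'$ solves \eqref{compatibility}. Conversely, $J \in \Orb(J)$ (take $P=Q=I_3$), so the ``for all $J'\in\Orb(J)$'' direction trivially implies $J$ itself is a solution.

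There is no real obstacle here: the whole argument rests on recognising that the dot product is conjugation-friendly under transposition and that the Haar measure absorbs left and right multiplications. The only minor care needed is in the algebraic manipulation of the trace to confirm that $PJQ \cdot A$ becomes $J\cdot(P^T A Q^T)$ rather than some other variant, which is a direct consequence of the cyclicity of the trace.
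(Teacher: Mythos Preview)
Your proof is correct and follows essentially the same route as the paper's: both rest on the bi-invariance of the Haar measure to obtain the equivariance identity $\langle A\rangle_{M_{PJQ}} = P\,\langle A\rangle_{M_J}\,Q$ (the paper writes it as $\langle PAQ\rangle_{M_J}=\langle A\rangle_{M_{PJQ}}$, which is the same statement). You have simply spelled out the trace manipulation and change of variable that the paper leaves implicit.
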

\begin{proof}
This is a consequence of the left and right invariance of the Haar measure which ensures that for any $J\in\MM_3(\R)$ and any $P,Q\in SO_3(\R)$~:
\[\langle PAQ\rangle_{M_J}=\langle A\rangle_{M_{PJQ}.}\]\end{proof}
Since the diagonal part of the SSVD of a matrix $J$ is in the orbit of $J$, we obtain the following corollary:
\begin{corollary}[Reduction to diagonal matrices]\label{equivalenceSSVD}
Let $J\in\MM_3(\R)$ with SSVD given by $J~=~PDQ$. The following equivalence holds: $J$ is a solution of \eqref{compatibility} if and only if $D$ is a solution of \eqref{compatibility}.
\end{corollary}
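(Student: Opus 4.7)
The approach is essentially an immediate consequence of Proposition~\ref{equivalence}: the SSVD exhibits $J$ and $D$ as elements of the same $SO_3(\R)\times SO_3(\R)$-orbit, so orbital reduction yields the equivalence for free. No additional work involving the von Mises integrals or the specific structure of the cone $\mathscr{D}$ from \eqref{diagssvd} is required at this stage; those refinements will only enter in Section~\ref{paragraphdiagonalsolutions}.

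Concretely, the plan is as follows. I would first observe that, by Definition~\ref{ssvd}, the SSVD takes the form $J = PDQ$ with $P,Q \in SO_3(\R)$. Rearranging gives $D = P^T J Q^T$, and since $P^T,Q^T \in SO_3(\R)$, the definition \eqref{orbit} places $D \in \Orb(J)$. Symmetrically, the identity $J = PDQ$ itself shows that $J \in \Orb(D)$. I would then simply invoke Proposition~\ref{equivalence} in both directions: if $J$ solves \eqref{compatibility}, every element of $\Orb(J)$ does, so in particular $D$ does; conversely, if $D$ solves \eqref{compatibility}, every element of $\Orb(D)$ does, so in particular $J$ does.

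The only point worth flagging is that this argument depends crucially on having both factors $P$ and $Q$ in $SO_3(\R)$ rather than merely in $\OO_3(\R)$. If one only had the standard SVD (Proposition~\ref{svd}) at hand, the orthogonal factors could have determinant $-1$, and Proposition~\ref{equivalence}—whose proof relies on the left and right invariance of the \emph{rotationally} invariant Haar measure on $SO_3(\R)$—would no longer apply. This is precisely why Definition~\ref{ssvd} was introduced in parallel with the standard SVD. Beyond this bookkeeping, there is no genuine obstacle to the proof.
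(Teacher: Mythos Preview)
Your proof is correct and matches the paper's approach exactly: the paper simply observes that the diagonal part $D$ of a SSVD lies in $\Orb(J)$ (because $P,Q\in SO_3(\R)$) and then invokes Proposition~\ref{equivalence}. Your additional remark about why the SSVD rather than the ordinary SVD is needed is accurate and complements the paper's brief justification.
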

We will therefore consider only the following problem in dimension 3: find all the diagonal matrices $D\in\MM_3(\R)$ such that
\begin{equation}\label{compatibilityD}
\left\{\begin{array}{l}
D=\rho\langle A\rangle_{M_D}\\
D\in\mathscr{D},\end{array}\right.
\end{equation}
where the set $\mathscr{D}$ is the subset of diagonal matrices which are the diagonal part of a SSVD and is defined by \eqref{diagssvd}. Notice that Equation \eqref{compatibilityD} is just Equation \eqref{compatibility} restricted to the set~$\mathscr{D}$.
\begin{rem}\label{diagonalsolutionsandtheothers} The diagonal part $D\in\MM_3(\R)$ of a SSVD of a matrix $J\in\MM_3(\R)$ is unique so the problems \eqref{compatibility} and \eqref{compatibilityD} are equivalent. Notice that there might be other diagonal matrices in $\Orb(J)$ (take for example $J$ diagonal which does not satisfy the conditions \eqref{diagssvd}). However the diagonal part of any SSVD of these matrices is $D$~: the diagonal part of the SSVD characterises the orbit of a matrix. In the following, we will find all the diagonal solutions of \eqref{compatibility} (i.e. the solutions of \eqref{compatibilityD} without the restriction $D\in\mathscr{D}$) and then only consider the ones which belong to $\mathscr{D}$. For instance we will see that there are solutions of \eqref{compatibility} of the form $\diag(0,-\alpha,0)$ where $\alpha>0$. The  diagonal part of their SSVD is $\diag(\alpha,0,0)$ and is a solution of \eqref{compatibilityD}. 
\end{rem}

\begin{rem} A diagonal solution $D$ of the matrix compatibility equation \eqref{compatibility} verifies that $D/\rho$ belongs to the set: 
\[\Omega=\Big\{D=\diag(d_1,d_2,d_3),\,\,\,\exists\,\,f\in\mathcal{P}(SO_3(\R)),\,\,\,J_f=D\Big\}\subset\mathscr{D}_3(\R),\]
where $\mathcal{P}(SO_3(\R))$ is the set of probability measures on $SO_3(\R)$. The set $\diag^{-1}(\Omega)\subset\R^3$ is exactly the tetrahedron $\mathscr{T}$ defined as the convex hull of the points $(\pm1,\pm1,\pm1)$ with an even number of minuses (which we will call \textit{Horn's tetrahedron}). It is a consequence of Horn's theorem \cite[Theorem 8]{horn54} which states that $\mathscr{T}$ is exactly the set of vectors which are the diagonal of an element of $SO_3(\R)$. It ensures that if $f$ is a probability measure, we have by convexity of $\mathscr{T}$~: 
\[\int_{SO_3(\R)} f(A) A\,dA\in\diag(\mathscr{T})\]
and therefore $\diag^{-1}(\Omega)\subset\mathscr{T}$. Conversely, taking the Dirac deltas $\delta_{I_3}$ and similarly for the other vertices of $\mathscr{T}$, we see that the four vertices of Horn's tetrahedron belong to $\diag^{-1}(\Omega)$. Since $\Omega$ is convex, we conclude that $\mathscr{T}\subset\diag^{-1}(\Omega)$.
\end{rem} 

The diagonal solutions of the matrix compatibility equation \eqref{compatibility} satisfy the following necessary condition.

\begin{proposition}\label{necessarycondition}
The diagonal solutions of the compatibility equation \eqref{compatibility} are necessarily of one of the following the types : 
\begin{enumerate}[(a)]
\item $D=0$.
\item $D=\alpha\diag(\pm1,\pm1,\pm1)$ with an even number of minus signs and where $\alpha\in\R\setminus\{0\}$.\

If $\alpha\in(0,+\infty)$, the diagonal part of the SSVD of these diagonal matrices is equal to $D=\alpha I_3$.\\
If $\alpha\in(-\infty,0)$, the diagonal part of the SSVD of these diagonal matrices is equal to $D=\alpha\diag(-1,-1,1)=|\alpha|\diag(1,1,-1)$.
\item $D=\alpha\diag(\pm1,0,0)$ and the matrices obtained by permutation of the diagonal coefficients and where $\alpha\in\R\setminus\{0\}$.\

The diagonal part of the SSVD of these diagonal matrices is equal to $D=\diag(|\alpha|,0,0)$. 

\end{enumerate}
\end{proposition}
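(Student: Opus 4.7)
The plan is to transport the compatibility equation $D=\rho\langle A\rangle_{M_D}$ to the sphere $\Sph^3$ via the double cover of $SO_3(\R)$ by unit quaternions, recasting it as a Bingham-type self-consistency equation whose solutions can be classified by a symmetry and monotonicity argument. This is the approach used in the theory of nematic liquid crystals (cf.\ the works of Wang--Hoffman and Wang--Zhou cited in the introduction), and the three families (a)--(c) in the statement are exactly the three types of critical points of a Bingham functional on $\Sph^3$.

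More concretely, using the quaternion parametrisation $A=A(q)$, $q=(q_0,q_1,q_2,q_3)\in\Sph^3$, one has $a_{11}=q_0^2+q_1^2-q_2^2-q_3^2$, $a_{22}=q_0^2-q_1^2+q_2^2-q_3^2$, $a_{33}=q_0^2-q_1^2-q_2^2+q_3^2$. Hence, for diagonal $D=\diag(d_1,d_2,d_3)$,
\[
D\cdot A=\tfrac12\sum_{k=0}^{3}\lambda_k\,q_k^2,
\]
where $\lambda_0=d_1+d_2+d_3$, $\lambda_1=d_1-d_2-d_3$, $\lambda_2=-d_1+d_2-d_3$, $\lambda_3=-d_1-d_2+d_3$ satisfy $\sum_k\lambda_k=0$, and the map $(d_i)\mapsto(\lambda_k)$ is an invertible linear correspondence onto this hyperplane. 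Introducing the moments $m_k(\lambda):=\langle q_k^2\rangle$ under the probability measure on $\Sph^3$ proportional to $e^{\tfrac12\sum_j\lambda_j q_j^2}$, suitable linear combinations of the three scalar equations $d_i=\rho\langle a_{ii}\rangle_{M_D}$ recast the matrix compatibility equation as the Bingham self-consistency system
\[
\lambda_k=\rho\bigl(4\,m_k(\lambda)-1\bigr),\qquad k=0,1,2,3.
\]

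The core of the proof is then to show that in any solution the $\lambda_k$ take at most two distinct values. The symmetry ingredient is: if $\lambda_i=\lambda_j$, the change of variable $q_i\leftrightarrow q_j$ in the integrals defining $m_i,m_j$ yields $m_i(\lambda)=m_j(\lambda)$; conversely, pairing each $q$ with its swapped image and invoking a direct correlation estimate shows that $\lambda_i>\lambda_j$ strictly implies $m_i(\lambda)>m_j(\lambda)$. Hence the partition of $\{0,1,2,3\}$ into equivalence classes of equal $\lambda_k$'s coincides with that of equal $m_k$'s. The main obstacle is ruling out partitions with three or four distinct classes; I would handle this by rewriting the system in simplex coordinates $x_k=q_k^2$ (the push-forward of the uniform measure on $\Sph^3$ being $\mathrm{Dirichlet}(\tfrac12,\tfrac12,\tfrac12,\tfrac12)$) and identifying its solutions with critical points of a suitable convex--entropy functional on the $3$-simplex, whose rigidity, as established in the nematic-polymer setting by Wang and Hoffman, forces at most two active blocks.

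The translation back to $D$ is then purely algebraic. The one-block (trivial) partition combined with $\sum_k\lambda_k=0$ forces $\lambda\equiv 0$ and hence $D=0$, which is case~(a). A two-block partition of type $(1,3)$ or $(3,1)$, say $\lambda_{k_0}=\beta$ and $\lambda_k=-\beta/3$ for $k\ne k_0$, translates to $D=(\beta/3)\diag(\varepsilon_1,\varepsilon_2,\varepsilon_3)$ with $\varepsilon_i\in\{\pm1\}$ and an even number of minus signs (determined by $k_0$), giving case~(b). A two-block partition of type $(2,2)$ translates to $D=\alpha\diag(\pm 1,0,0)$ up to permutation of the coordinates, giving case~(c). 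The explicit description of the diagonal part of the SSVD claimed in each sub-case then follows from Definition~\ref{ssvd} by direct inspection of the relative order of $|d_1|,|d_2|,|d_3|$.
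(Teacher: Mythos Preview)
Your approach is essentially the same as the paper's: both transport the diagonal compatibility equation to $\Sph^3$ via the quaternion double cover, obtain a Bingham-type self-consistency system $s_i'=\langle q_i^2\rangle_{g_{\mathbf s',2\rho}}$ (equivalently your $\lambda_k=\rho(4m_k-1)$), invoke the Wang--Hoffman theorem to conclude that at most two distinct values occur among the $\lambda_k$, and then translate back case by case to recover (a), (b), (c). The paper packages the change of variables abstractly via the linear isomorphism $\phi:\MM_3(\R)\to\SS_4^0(\R)$ and the relation $\phi(\Phi(q))=q\otimes q-\tfrac14 I_4$, whereas you compute the entries $a_{ii}$ in quaternion coordinates directly; these are the same computation. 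Your outline of the symmetry and monotonicity ingredients is accurate but not itself a proof of the ``at most two values'' claim---you correctly note that the real work is the rigidity argument in Wang--Hoffman, which the paper likewise black-boxes as a cited theorem.
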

Section \ref{paragraphdiagonalsolutions} will be devoted to the proof of this proposition.
We are now ready to prove Theorem \ref{solutionscompatibility}.

\begin{proof}[\sc Proof (of Theorem \ref{solutionscompatibility})] An equilibria of the BGK equation is of the form 
\[f=\rho M_J,\]
where 
\[J=J_f =\rho\langle A\rangle_{M_J}.\]
It is straightforward to check that $J=0$ is a solution of \eqref{compatibility}. Now, let $D$ a matrix of one the form described in Proposition \ref{necessarycondition} with a parameter $\alpha\in\R$. For instance, for a matrix of type \textit{(c)} like $D=\alpha\diag(0,-1,0)$, thanks to Lemma \ref{c1c2} we have:
\[D=\rho\langle A\rangle_{M_D}\,\Longleftrightarrow\,D=\rho c_2(\alpha) \diag(0,-1,0)\,\Longleftrightarrow\,\alpha=\rho c_2(\alpha).\]
Similarly for the other diagonal matrices of type \textit{(c)}, we prove that they are solution of the matrix compatibility equation \eqref{compatibility} if and only if their parameter $\alpha\in\R$ is solution of the scalar compatibility equation \eqref{compatibilityc2}. Analogously one can check that the diagonal matrices of type \textit{(b)} are solutions of the matrix compatibility equation \eqref{compatibility} if and only if their parameters $\alpha\in\R$ are solutions of the scalar compatibility equation \eqref{compatibilityc1}. This yields all the diagonal solutions of \eqref{compatibility}. Now, the solutions of \eqref{compatibility} are exactly the matrices $J\in \Orb(D)$ where $D$ is a diagonal solution of \eqref{compatibility} and the set $\Orb(D)\subset\MM_3(\R)$ is the orbit of $D$ defined in the introduction. We conclude by noticing that if $D$ is of type $(b)$ then $\Orb(D)= SO_3(\R)$ and if $D$ is of type $(c)$ then $\Orb(D)=\mathscr{B}$.
\end{proof}

\begin{rem}\label{sufficientcondition} When applied to diagonal matrices, the last part of Theorem \ref{solutionscompatibility} states that the diagonal solutions of \eqref{compatibility} are necessarily of one of the types (a), (b) or (c) defined in Proposition \ref{necessarycondition} and that, it holds that
\begin{enumerate}
\item the matrix $0$ is always a solution of \eqref{compatibility}, 
\item a matrix of type \textit{(b)} is a solution of \eqref{compatibility} iff its parameter $\alpha\in\R\setminus\{0\}$ satisfies \eqref{compatibilityc1},
\item a matrix of type \textit{(c)} is a solution of \eqref{compatibility} iff its parameter $\alpha\in\R\setminus\{0\}$ satisfies \eqref{compatibilityc2}.
\end{enumerate}
\end{rem}

\subsection{Proof of Proposition \ref{necessarycondition}}\label{paragraphdiagonalsolutions}

The proof of Proposition \ref{necessarycondition} is based on two results. The first one
has been proved in \cite[Section 4]{wanghoffman08} to study the nematic alignment of polymers in higher dimensional spaces:
\begin{theorem}[\cite{wanghoffman08}]\label{wanghoffman}
Let $n\geq3$, $b\in\R_+$ and $\mathbf{s}=(s_1,s_2,\ldots,s_n)\in \R^n$ a solution of the nonlinear system 
\begin{equation}\label{compatibilitywanghoffman}s_j=\langle m_j^2\rangle_{g_{\mathbf{s},b}},\,\,\,\,\,j=1,\ldots,n,\end{equation}
where the average is taken with respect to the PDF on the sphere $\Sph^{n-1}$~:
\begin{equation}\label{gsb}g_{\mathbf{s},b}(m_1,\ldots,m_n):=\frac{1}{Z}\exp\left(b\sum_{j=1}^{n} s_j m_j^2\right),\end{equation}
where $Z$ is the normalisation constant which ensures that $g_{\mathbf{s},b}$ is a PDF on the sphere $\Sph^{n-1}$. Then $\Card\{s_1,s_2,\ldots,s_n\}\leq2$.\end{theorem}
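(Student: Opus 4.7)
The plan is to recast \eqref{compatibilitywanghoffman} variationally, exploit the obvious permutation symmetry of the model to reduce the equations to the list of distinct coordinate values, and then prove a rigidity statement that excludes three or more distinct values. Define the log-partition function
\[\psi(\mathbf{t}) := \log \int_{\Sph^{n-1}} \exp\Big(b\sum_{j=1}^n t_j m_j^2\Big)\,dm,\]
which is smooth and convex on $\R^n$ and satisfies $\partial_{t_j}\psi(\mathbf{t}) = b\langle m_j^2\rangle_{g_{\mathbf{t},b}}$. Thus $\mathbf{s}$ solves \eqref{compatibilitywanghoffman} if and only if it is a critical point of $F(\mathbf{t}) := \tfrac{b}{2}\|\mathbf{t}\|^2 - \psi(\mathbf{t})$. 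The identity $\sum_j m_j^2 \equiv 1$ on $\Sph^{n-1}$ both forces $\sum_j s_j = 1$ (sum the equations) and gives the translation invariance $\psi(\mathbf{t}+c\mathbf{1}) = bc + \psi(\mathbf{t})$, so the problem effectively lives on the hyperplane $\{\sum_j t_j = 1\}$.

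Suppose now that $\mathbf{s}$ takes $k$ distinct values $v_1 > \cdots > v_k$ with multiplicities $d_i := |I_i|$, where $I_i = \{j : s_j = v_i\}$. Because the weight $e^{b\sum_j s_j m_j^2}$ is invariant under the subgroup $O(d_1)\times\cdots\times O(d_k) \leq O(n)$ acting blockwise, I would push the problem onto the simplex: substitute $x_j := m_j^2$, so the uniform measure on $\Sph^{n-1}$ transports to the Dirichlet $(\tfrac12,\ldots,\tfrac12)$ on $\Delta^{n-1}$, then aggregate $u_i := \sum_{j\in I_i} x_j$. The push-forward on the $(k-1)$-simplex is the tilted Dirichlet
\[\tilde g(u) \propto \Big(\prod_{i=1}^k u_i^{d_i/2 - 1}\Big)\exp\Big(b\sum_{i=1}^k v_i u_i\Big),\qquad u_i \geq 0,\ \ \sum_i u_i = 1,\]
and within each block $I_i$ the individual moments $\langle m_j^2\rangle_{g_{\mathbf{s},b}}$ are all equal and coincide with $\langle u_i\rangle_{\tilde g}/d_i$. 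System \eqref{compatibilitywanghoffman} therefore collapses to the $k$ scalar equations $d_i v_i = \langle u_i\rangle_{\tilde g}$, coupled by $\sum_i d_i v_i = 1$.

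The remaining step, which I expect to be the main obstacle, is to show that this reduced system has no solution as soon as $k \geq 3$. My approach is pairwise on the sphere: for two indices $j \ne k$, parametrise $m_j = r\cos\theta$, $m_k = r\sin\theta$ with $r^2 = m_j^2 + m_k^2$; conditionally on the remaining coordinates the angular law is proportional to $e^{c\cos(2\theta)}$ with $c = b r^2 (s_j - s_k)/2$, so integrating out $\theta$ yields the self-consistency
\[s_j - s_k = \E\Big[r^2\, \frac{I_1(c)}{I_0(c)}\Big],\]
an equation for the gap $s_j - s_k$ alone (with the outer expectation over the remaining spherical coordinates). If three distinct values existed one would obtain three such relations for the gaps $\sigma_{j_1 j_2}$, $\sigma_{j_2 j_3}$ and $\sigma_{j_1 j_3} = \sigma_{j_1 j_2} + \sigma_{j_2 j_3}$; combining them with the strict monotonicity and concavity of the Bessel ratio $c \mapsto I_1(c)/I_0(c)$ and with the global constraint $\sum_i d_i v_i = 1$ should produce the contradiction. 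This Bessel-type rigidity argument is the technical heart of the theorem (and of Wang--Hoffman's proof), and this is where most of the work lies; once it is carried out, $k \leq 2$ follows.
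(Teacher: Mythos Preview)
The paper does not give its own proof of this theorem: it is quoted as an external result from Wang and Hoffman \cite{wanghoffman08}, with an explicit pointer to Section~4 of that article. There is therefore nothing in the present paper to compare your argument against.

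On the proposal itself: the preliminary reductions (variational reformulation via the log-partition function, the translation invariance from $\sum_j m_j^2=1$, the block symmetry under $O(d_1)\times\cdots\times O(d_k)$, and the push-forward to a tilted Dirichlet on the simplex) are correct and standard. But the only nontrivial assertion --- that $k\geq3$ is impossible --- is not proved; you explicitly defer it, writing that the Bessel-ratio concavity ``should produce the contradiction''. This is precisely the content of the theorem, and your sketch does not make it clear why it works. Each pairwise gap relation
\[
s_j-s_k=\E\!\left[r^2\,\frac{I_1\!\big(\tfrac{b}{2}r^2(s_j-s_k)\big)}{I_0\!\big(\tfrac{b}{2}r^2(s_j-s_k)\big)}\right]
\]
involves an expectation over \emph{all} remaining coordinates, and the law of $r^2$ itself depends on which pair $(j,k)$ you have chosen and on the full vector $\mathbf{s}$. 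The three relations for $\sigma_{j_1 j_2}$, $\sigma_{j_2 j_3}$, $\sigma_{j_1 j_3}$ are therefore taken with respect to three different distributions, so one cannot simply add them and invoke concavity of $c\mapsto I_1(c)/I_0(c)$. As written, this is a genuine gap rather than a routine detail; if you want a self-contained proof you will need to supply the actual rigidity argument (or follow Wang--Hoffman's original one).
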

The second tool that we will use to prove Proposition \ref{necessarycondition} is an isomorphism between $SO_3(\R)$ and the space of unitary quaternions which transforms the compatibility equation \eqref{compatibilityD} into the compatibility equation \eqref{compatibilitywanghoffman} studied in Theorem \ref{wanghoffman}.
\begin{proposition}\label{SO3quaternions} \
\begin{enumerate}
\item There is an isomorphism between the group $SO_3(\R)$ and the quotient group $\HH/\pm1$, where $\HH$ is the group of unit quaternions. Since $\HH$ is homeomorphic to $\Sph^3$, there is an isomorphism $\Phi$~: 
\[\Phi : \Sph^3/\pm1 \longrightarrow SO_3(\R).\]
Moreover $\Phi$ is an isometry in the sense that it maps the volume form of $\Sph^3/\pm1$ (defined as the image measure of the usual measure on $\Sph^3$ by the projection on the quotient space) to the volume form on $SO_3(\R)$: for all measurable function $f$ on $SO_3(\R)$,
\[\int_{\Sph^3/\pm1} f\big(\Phi(q)\big)\,dq = \int_{SO_3(\R)} f(A)\,dA.\]
\item There is a linear isomorphism between the vector space $\MM_3(\R)$ and the vector space $\SS_4^0(\R)$ of trace free symmetric matrices of dimension 4:
\[\phi~: \MM_3(\R) \longrightarrow \SS_4^0(\R),\]
such that 
for all $J\in\MM_3(\R)$, and $q\in \HH/\pm1$, 
\[\frac{1}{2} J\cdot \Phi(q) = q\cdot \phi(J)q.\]
The first dot product is defined by Equation \eqref{produitscalaire} and the second one is the usual dot product in $\R^4$. 
 \item For all $q\in\HH/\pm1$, it holds that $\phi\big(\Phi(q)\big)=q\otimes q-\frac{1}{4}I_4$.
 \item The isomorphism $\phi$ preserves the diagonal structure: if $D=\diag(d_1,d_2,d_3)$ then, 
\[\phi(D)=\frac{1}{4}\left(\begin{array}{cccc}d_1+d_2+d_3 & 0 & 0 & 0 \\ 0 & d_1-d_2-d_3 & 0 & 0 \\ 0 & 0 & -d_1+d_2-d_3 &  \\ & 0 & 0 & -d_1-d_2+d_3\end{array}\right)\]
and if $Q=\diag(s_1,s_2,s_3,s_4)$ with $s_1+s_2+s_3+s_4=0$, then 
\[\phi^{-1}(Q)=2\left(\begin{array}{ccc}
{s_1+s_2} & 0 & 0\\ 
 0 & {s_1+s_3} & 0 \\
 0 & 0 & {s_1+s_4}
 \end{array}
 \right).\]

\end{enumerate}
\end{proposition}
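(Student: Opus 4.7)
The plan is to build everything from the classical quaternion-to-rotation correspondence: identify $\HH$ with $\R^4$ via $q = q_0 + q_1 i + q_2 j + q_3 k \leftrightarrow (q_0, q_1, q_2, q_3)$ and embed $\R^3 \hookrightarrow \HH$ as pure quaternions. For $q \in \Sph^3$ the map $v \mapsto q v q^{-1}$ preserves pure quaternions and the Euclidean norm, so it defines $\Phi(q) \in SO_3(\R)$ whose matrix entries are homogeneous degree-$2$ polynomials in $(q_0, \ldots, q_3)$. A standard computation shows that $\Phi : \Sph^3 \to SO_3(\R)$ is a surjective group homomorphism with kernel $\{\pm 1\}$, giving the claimed isomorphism. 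For the isometry, the uniform measure on $\Sph^3$ is bi-invariant under unit-quaternion multiplication, so its image on $\Sph^3/\pm 1$ is well-defined and bi-invariant; its pushforward under $\Phi$ is therefore a bi-invariant probability measure on $SO_3(\R)$, which must coincide with $dA$ by uniqueness of the Haar measure.

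For point (2), since each $\Phi(q)_{ij}$ is quadratic homogeneous in $q$, the scalar $\tfrac{1}{2} J \cdot \Phi(q)$ is a quadratic form in $q \in \R^4$ for every $J \in \MM_3(\R)$, and therefore defines a unique symmetric matrix $\phi(J) \in \SS_4(\R)$. The assignment $J \mapsto \phi(J)$ is manifestly linear. To see trace-freeness, evaluate on the quaternion basis $1, i, j, k$:
\begin{equation*}
\Tr \phi(J) = \tfrac{1}{2} J \cdot \bigl( \Phi(1) + \Phi(i) + \Phi(j) + \Phi(k) \bigr),
\end{equation*}
and a direct check using $\Phi(1) = I_3$, $\Phi(i) = \diag(1, -1, -1)$, $\Phi(j) = \diag(-1, 1, -1)$, $\Phi(k) = \diag(-1, -1, 1)$ shows the right-hand side vanishes. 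Since $\dim \MM_3(\R) = 9 = \dim \SS_4^0(\R)$, bijectivity reduces to injectivity: $\phi(J) = 0$ forces $J \cdot A = 0$ for every $A \in \Phi(\Sph^3) = SO_3(\R)$, and since $SO_3(\R)$ linearly spans $\MM_3(\R)$ we conclude $J = 0$.

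For point (3), I would combine the group property $\Phi(q)^T \Phi(q') = \Phi(q^{-1} q')$ with the trace formula $\Tr \Phi(p) = 4 p_0^2 - 1$ for unit $p$ (read off the diagonal entries of $\Phi(p)$) and with the identity $(q^{-1} q')_0 = q \cdot q'$ (the scalar part of $\bar q q'$ equals the Euclidean inner product in $\R^4$ when $|q| = |q'| = 1$). This yields $\tfrac{1}{2} \Phi(q) \cdot \Phi(q') = \tfrac{1}{4} \Tr \Phi(q^{-1} q') = (q \cdot q')^2 - \tfrac{1}{4} = q' \cdot (q \otimes q - \tfrac{1}{4} I_4) q'$, and uniqueness of $\phi$ identifies $\phi(\Phi(q)) = q \otimes q - \tfrac{1}{4} I_4$. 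For point (4), specialising to $D = \diag(d_1, d_2, d_3)$ and expanding $\tfrac{1}{2} D \cdot \Phi(q) = \tfrac{1}{4} \sum_i d_i \Phi(q)_{ii}$ in powers of $q_0, q_1, q_2, q_3$ yields the diagonal entries of $\phi(D)$ displayed in the statement; the inverse formula then follows by solving the resulting linear system together with the trace-free constraint $s_1 + s_2 + s_3 + s_4 = 0$.

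The main obstacle is the isometry claim in (1): the rest reduces to transparent algebraic identities once $\Phi$ is written down explicitly. The cleanest route I see is the uniqueness argument for bi-invariant probability measures on the compact group $SO_3(\R)$ sketched above, but an alternative is an explicit change of variable via Rodrigues' formula, using that the unit quaternion $(\cos(\theta/2), \sin(\theta/2) \mathbf{n})$ encodes the rotation of angle $\theta$ around $\mathbf{n}$ and converts the $SO_3(\R)$ volume form $\sin^2(\theta/2) \, d\theta \, d\mathbf{n}$ from Section \ref{volumeforms} into the round measure on $\Sph^3/\pm 1$.
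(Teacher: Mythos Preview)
Your argument is correct and follows the same overall structure as the paper's proof, but with a few methodological differences worth noting. For the isometry in part~(1), the paper simply cites \cite[Proposition~A.3]{degondfrouvellemerinotrescases18} and \cite[Lemma~4.2]{degondfrouvellemerino17}, whereas your Haar-uniqueness argument is self-contained and more conceptual. For part~(2), the paper writes out the full $4\times4$ matrix $\phi(J)$ explicitly in terms of the entries $J_{ij}$ and reads off trace-freeness and invertibility directly; your route via $\Tr\phi(J)=\tfrac12 J\cdot\sum_{e\in\{1,i,j,k\}}\Phi(e)=0$ and injectivity through $\Span(SO_3(\R))=\MM_3(\R)$ avoids the full computation and ties in neatly with Lemma~\ref{spanson}. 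For part~(3), the paper parametrises both rotations by angle--axis pairs $(\theta,\mathbf n)$ and $(\theta',\mathbf n')$ and computes the composite angle $\tilde\theta$ via Rodrigues' formula; your derivation from $\Phi(q)^T\Phi(q')=\Phi(q^{-1}q')$, $\Tr\Phi(p)=4p_0^2-1$, and $(\bar q q')_0=q\cdot q'$ is the same computation in quaternion language and slightly slicker. Part~(4) is identical in both. In short, your proof trades the paper's explicit matrix calculations for more structural arguments, at the cost of invoking the spanning lemma and Haar uniqueness; both approaches are equally valid.
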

The proof of this proposition can be found in appendix \ref{quaternionsandrotations}. We are now ready to prove Proposition \ref{necessarycondition}.
\begin{proof}[\sc Proof (of Proposition \ref{necessarycondition})]
Using the first and second points of Proposition \ref{SO3quaternions}, it holds that
\[\int_{SO_3(\R)} A e^{A\cdot D}\,dA = \int_{\Sph^3/\pm1} \Phi(q)e^{\Phi(q)\cdot D}\,dq = \int_{\Sph^{3}/\pm1}\Phi(q)e^{2q\cdot\phi(D)q}\,dq.\]
The compatibility equation \eqref{compatibilityD} then becomes:
\[D=\frac{\rho}{Z}\int_{\Sph^{3}/\pm1}\Phi(q)e^{2q\cdot\phi(D)q}\,dq.\]
 Applying the isomorphism $\phi$ defined in Proposition \ref{SO3quaternions} to this last equation, we obtain thanks to the third point of Proposition \ref{SO3quaternions}~:
 \[\phi(D)=\frac{\rho}{Z}\int_{\Sph^{3}/\pm1}\phi\big(\Phi(q)\big)e^{2q\cdot\phi(D)q}\,dq=\frac{\rho}{Z}\int_{\Sph^{3}/\pm1}\left(q\otimes q-\frac{1}{4}I_4\right)e^{2q\cdot\phi(D)q}\,dq.\]
Using the fourth point of Proposition \ref{SO3quaternions}, we then obtain the following equivalent problem: find all the trace-free diagonal matrices $Q=\diag(s_1,s_2,s_3,s_4)$ of dimension 4 such that 
\[Q=\rho\frac{\int_{\Sph^3/\pm1} e^{\sum_{i=1}^{4} 2s_iq_i^2}(q\otimes q-\frac{1}{4} I_4)\,dq}{Z},\]
where $Z$ is a normalisation constant:
\[Z:=\int_{\Sph^3/\pm1} e^{\sum_{i=1}^{4} 2s_iq_i^2}\,dq.\]
Equivalently, defining for $i\in\{1,2,3,4\}$~:
\[s_i':=\frac{s_i}{\rho}+\frac{1}{4},\]
we want to solve the system of compatibility equations:~
\begin{equation}\label{compatibilitysi}s_i' = \int_{\Sph^3/\pm1} q_i^2g_{\mathbf{s}',2\rho}(q)\,dq,\,\,\,\,\,\,\,i=1,2,3,4,\end{equation}
where $\mathbf{s}'=(s_1',s_2',s_3',s_4')$ and $g_{\mathbf{s}',2\rho}$ is given by \eqref{gsb}. Thanks to Theorem \ref{wanghoffman}, we conclude that if $\mathbf{s}'$ is a solution of \eqref{compatibilitysi}, then the coefficients $s_1',s_2',s_3',s_4'$ can take at most two distinct values. So, the same result holds for the coefficients $s_1,s_2,s_3,s_4$. Now thanks to the fourth point of Proposition \ref{SO3quaternions}, we only have the following possibilities:
\begin{enumerate}[$\bullet$]
\item if $s_1=s_2=s_3=s_4=0$, then 
\[D=\phi^{-1}(Q)=0,\]
\item if $s_1=3\alpha/4$ and $s_2=s_3=s_4=-\alpha/4$ for $\alpha\in\R$, then 
\[D=\phi^{-1}(Q)=\alpha I_3,\]
\item if $s_2=3\alpha/4$ and $s_1=s_3=s_4=-\alpha/4$ for $\alpha\in\R$, then 
\[D=\phi^{-1}(Q)=\alpha\left(\begin{array}{ccc}1 & 0 & 0 \\0 & -1 & 0 \\0 & 0 & -1\end{array}\right),\]
and similarly by permuting the diagonal elements when $s_3=3\alpha/4$ and when the other elements are equal $s_1=s_2=s_4=-\alpha/4$ or when $s_4=3\alpha/4$ and $s_1=s_2=s_3=-\alpha/4$,
\item if $s_1=s_2=\alpha/4$ and $s_3=s_4=-\alpha/4$ for $\alpha\in\R$, then 
\[D=\phi^{-1}(Q)=\alpha\left(\begin{array}{ccc}1 & 0 & 0 \\0 & 0 & 0 \\0 & 0 & 0\end{array}\right),\] 
and similarly by permuting the diagonal elements when $s_1=s_3=\alpha/4$ and when $s_2=s_4=-\alpha/4$ or $s_1=s_4=\alpha/4$ and $s_2=s_3=-\alpha/4$.
\end{enumerate}
The computation of the SSVD for these matrices is an easy computation.
This concludes the proof of Proposition \ref{necessarycondition}.\end{proof}

\subsection{Determination of the equilibria for each density $\rho$}

In Theorem \ref{solutionscompatibility} we saw that the BGK operator can have three types of equilibria. The uniform equilibria $f=\rho$ (corresponding to $J=0$) is always an equilibrium. However, the existence of the other two types of equilibria depends on Equations \eqref{compatibilityc1} and \eqref{compatibilityc2} having a solution for a given $\rho$. Therefore the existence of these types of equilibria will depend on the value of $\rho$. In this section we will determine the existing equilibria for each value of $\rho$. In particular, we will draw the phase diagram for $\rho$ and $\alpha$, that is to say the parametrised curves defined by Equations \eqref{compatibilityc1} and \eqref{compatibilityc2} in the plane $(\rho,\alpha)$ (see Figure \ref{phasediagram}). We first prove the following proposition.

\begin{proposition}\label{alphasurc} Let $\rho_c:=6$. 
\begin{enumerate}[(i)]
\item The function $\alpha\mapsto \alpha/c_1(\alpha)$ is well-defined on $\R$, its value at zero is $\rho_c$. Moreover, there exists $\alpha^*>0$ such that this function is decreasing on $(-\infty,\alpha^*]$ and increasing on $[\alpha^*,+\infty)$. Defining  $\rho^*:=\alpha^*/c_1(\alpha^*)$, it holds that $\rho^*<\rho_c$.
\item The function $\alpha\mapsto \alpha/c_2(\alpha)$ is even. It is decreasing on $(-\infty,0)$, increasing on $(0,\infty)$ and its value at zero is $\rho_c$.
\item We have the following asymptotic behaviours:
\[\frac{\alpha}{c_1(\alpha)}\underset{\alpha\to+\infty}{\sim}\alpha+1,\]
\[\frac{\alpha}{c_2(\alpha)}\underset{\alpha\to+\infty}{\sim}\alpha+2.\]

\end{enumerate}
\end{proposition}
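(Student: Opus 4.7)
I would treat the two parts separately, as they have different flavors.

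\textbf{Part (ii) and the $c_2$ asymptotic in (iii).} The function $c_2$ admits a closed form. Performing the change of variable $u = \cos\phi$ in both the numerator and denominator of $c_2(\alpha) = [\cos\phi]_\alpha$ cancels the $\sin\phi$ weight with $|du|$ and reduces the integrals to elementary moments of $u \mapsto e^{\alpha u / 2}$ on $[-1,1]$, yielding
\[
c_2(\alpha) = \coth(\alpha/2) - \frac{2}{\alpha},
\]
extended by continuity at $0$. All the claims about $c_2$ follow directly: oddness of $c_2$ gives evenness of $\alpha/c_2(\alpha)$; the Taylor expansion $\coth x = 1/x + x/3 + O(x^3)$ gives $c_2(\alpha) = \alpha/6 + O(\alpha^3)$ and hence $\alpha / c_2(\alpha) \to 6$ at $0$; strict monotonicity of $\alpha / c_2(\alpha)$ on $(0, \infty)$ reduces to a $\sinh$/$\cosh$ inequality that can be checked by power series; and $c_2(\alpha) = 1 - 2/\alpha + O(e^{-\alpha})$ at $+\infty$ yields $\alpha / c_2(\alpha) \sim \alpha + 2$.

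\textbf{Value at $0$, monotonicity and asymptotic for $c_1$.} No closed form is available; the structural input is the variance identity obtained by differentiating under the integral in \eqref{PDF1},
\[
c_1'(\alpha) = \tfrac{2}{3}\,\Var_\alpha(\cos\theta),
\]
which shows that $c_1$ is strictly increasing on $\R$. Explicit integration against the limit density $(1 - \cos\theta)/\pi$ gives $c_1(0) = 0$ and $c_1'(0) = 1/6$, so $\alpha / c_1(\alpha) \to 6$ at $0$. For the asymptotic at $+\infty$, the density \eqref{PDF1} concentrates at $\theta = 0$; the Laplace-type rescaling $\theta = s/\sqrt{\alpha}$ combined with $\cos\theta = 1 - \theta^2/2 + O(\theta^4)$ and $\sin^2(\theta/2) = \theta^2/4 + O(\theta^4)$ turns the numerator and denominator of $c_1(\alpha)$ into Gaussian integrals on $[0, \infty)$; keeping the first two orders yields $c_1(\alpha) = 1 - 1/\alpha + o(1/\alpha)$ and hence $\alpha/c_1(\alpha) = \alpha + 1 + o(1)$. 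The analogous Laplace expansion around $\theta = \pi$ as $\alpha \to -\infty$ gives $c_1(\alpha) = -1/3 - 1/(3\alpha) + O(\alpha^{-2})$, so $\alpha / c_1(\alpha) \to +\infty$.

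\textbf{Existence and uniqueness of $\alpha^*$, the main obstacle.} Setting $h(\alpha) := \alpha / c_1(\alpha)$ on $\R \setminus \{0\}$, the sign of $h'$ equals that of $F(\alpha) := c_1(\alpha) - \alpha c_1'(\alpha)$. A direct computation of the third central moment of $\cos\theta$ under $(1-\cos\theta)/\pi$ gives $c_1''(0) = 1/12 > 0$, so $F(\alpha) = -\alpha^2/24 + O(\alpha^3)$ near $0$; together with the asymptotics above ($F \to 1$ at $+\infty$, $F \to -1/3$ at $-\infty$) this shows that $h$ is strictly decreasing on a neighbourhood of $0$, that $F$ changes sign at least once on $(0,\infty)$ at some $\alpha^* > 0$, and therefore that $\rho^* = h(\alpha^*) < h(0) = 6 = \rho_c$. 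The delicate point I anticipate is proving that $F$ changes sign \emph{exactly} once on $(0,\infty)$ and stays strictly negative on $(-\infty, 0)$. Since $F'(\alpha) = -\alpha c_1''(\alpha)$, this reduces to a sign control on the third cumulant $\partial_\alpha \Var_\alpha(\cos\theta)$ along the Gibbs family \eqref{PDF1}. My plan would be to translate the integrals on $SO_3(\R)$ into integrals on $\Sph^3/\pm 1$ through the quaternion isomorphism $\Phi$ of Section \ref{paragraphdiagonalsolutions} (so that the relevant cumulants become moments of $q_1^2$ under a one-parameter exponential family on the sphere), and then to handle the resulting one-variable problem in the spirit of the Vicsek analogue treated in \cite{degondfrouvelleliu15}.
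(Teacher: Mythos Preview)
Your treatment of (ii) and of both asymptotics in (iii) is correct and, for $c_2$, cleaner than the paper: you exploit the closed form $c_2(\alpha)=\coth(\alpha/2)-2/\alpha$, whereas the paper uses the integration-by-parts identity $\alpha/c_2(\alpha)=4/[\sin^2\varphi]_\alpha$ and argues qualitatively. Both give the same conclusions; yours is shorter here.

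The gap you flag in (i) is exactly where the paper takes a different and much simpler route, and your proposed plan (quaternion lift plus sign control on the third cumulant $c_1''$) is not needed. The paper's key observation is the identity
\[
\frac{\alpha}{c_1(\alpha)}=\frac{3}{\{\sin^2\theta\}_\alpha},
\]
obtained by the single integration by parts $\frac{d}{d\theta}\bigl(\sin^2(\theta/2)\sin\theta\bigr)=\sin^2(\theta/2)(1+2\cos\theta)$. Writing $m(\alpha):=\{\sin^2\theta\}_\alpha$, one has
\[
m'(\alpha)=\{\sin^2\theta\cos\theta\}_\alpha-\{\sin^2\theta\}_\alpha\{\cos\theta\}_\alpha,\qquad
m''(\alpha)=-\Var_\alpha(\cos^2\theta)-2\{\cos\theta\}_\alpha\,m'(\alpha),
\]
so that $m'(\alpha)=0$ forces $m''(\alpha)=-\Var_\alpha(\cos^2\theta)<0$: \emph{every} critical point of $m$ is a strict local maximum, hence there is at most one, and $\alpha/c_1(\alpha)=3/m(\alpha)$ has exactly one critical point, a global minimum. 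A direct check that $m'(0)>0$ places this minimum at some $\alpha^*>0$, giving $\rho^*<\rho_c$ immediately. Compared with your strategy (controlling $F'=-\alpha c_1''$ through the third cumulant, with a further reduction via $\Sph^3$), the paper trades a difficult sign analysis for an algebraic rewriting that makes the unimodality automatic; the same trick simultaneously handles the negative half-line, which your asymptotic $F\to-1/3$ at $-\infty$ alone does not settle.
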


\begin{proof} The idea of the proof is taken from \cite{wanghoffman08}.
\begin{enumerate}[(i)]
\item Since
\[\frac{d}{d\theta}\Big\{\sin^2(\theta/2)\sin\theta\Big\}=\sin^2(\theta/2)(1+2\cos\theta),\]
an integration by parts shows that: 
\[\frac{\alpha}{c_1(\alpha)}=3\frac{\int_0^\pi \sin^2(\theta/2)e^{\alpha\cos\theta}\,d\theta}{\int_0^\pi \sin^2(\theta/2)\sin^2\theta e^{\alpha\cos\theta}\,d\theta}=\frac{3}{\{ \sin^2\theta\}_\alpha}.\]
It proves that $\alpha$ and $c_1(\alpha)$ have the same sign for all $\alpha\in\R$. Then we define function $m~:\alpha\mapsto \{ \sin^2\theta\}_\alpha=3 c_1(\alpha)/\alpha$ which satisfies the property: 
\[m'(\alpha)=0\,\,\Longrightarrow m''(\alpha)<0,\]
since
\[m'(\alpha)=\{ \sin^2\theta\cos\theta\}_\alpha-\{\sin^2\theta\}_\alpha\{ \cos\theta\}_\alpha,\]
and
\[m''(\alpha)=-\Var_\alpha(\cos^2\theta)-2\{\cos\theta\}_\alpha m'(\alpha),\]
where $\Var_\alpha$ is the variance for the probability density \eqref{PDF1}. This property implies that $\alpha/c_1(\alpha)$ has only one critical point which is a global minimum. This minimum is attained at a point $\alpha^*>0$ as a simple computation shows that $m'(0)>0$ and consequently $\rho^*<\rho_c$. A simple computation gives $m(0)=\frac{1}{2}$ so $\rho_c=6$.
\item We have similarly: 
\begin{equation}\label{PierreN}\frac{\alpha}{c_2(\alpha)} = 4\frac{\int_0^\pi \sin\varphi e^{\frac{\alpha}{2}\cos\varphi}\,d\varphi}{\int_0^\pi \sin^3\varphi e^{\frac{\alpha}{2}\cos\varphi}\,d\varphi}=\frac{4}{[\sin^2\varphi]_\alpha},\end{equation}
from which we can easily see that $\alpha\mapsto \alpha/c_2(\alpha)$ is even and has only one minimum attained at $\alpha=0$. A simple computation shows that its value at 0 is $\rho_c=6$. 
\item The behaviour at infinity is obtained by Laplace's method: with the change of variable $s=1-\cos\theta$ on $[0,\pi]$, we get
\[\frac{\alpha}{c_1(\alpha)}=3\frac{e^{\alpha}\int_0^2 e^{-\alpha s}\frac{s}{2\sqrt{1-(1-s)^2}}\,ds}{e^\alpha\int_0^2 e^{-\alpha s}\frac{s}{2}\sqrt{1-(1-s)^2}\,ds}\underset{\alpha\to+\infty}{\sim}\alpha+1.\]
With the same method we have:
\[\frac{\alpha}{c_2(\alpha)}=4\frac{\int_0^2 e^{-\frac{\alpha}{2}s}\,ds}{\int_0^2 e^{-\frac{\alpha}{2}s}(1-(1-s)^2)\,ds}\underset{\alpha\to+\infty}{\sim}\alpha+2.\]
\end{enumerate}
\end{proof}

Thanks to Proposition \ref{alphasurc} and Theorem \ref{solutionscompatibility} we can now fully describe the equilibria of the BGK operator. A graphical representation of this result is given by the phase diagram depicted in Figure \ref{phasediagram}~:
\begin{corollary}[Equilibria of the BGK operator, depending on the density $\rho$]\label{phasediagramcorollary} The set of equilibria of the BGK operator \eqref{bgkoperator} depends on the value of $\rho$. In particular we need to distinguish three regions $\rho\in(0,\rho^*)$, $\rho\in(\rho^*,\rho_c)$ and $\rho>\rho_c$ where $\rho^*$ and $\rho_c$ are defined in Proposition \ref{alphasurc}. We have the following equilibria in each region:
\begin{enumerate}[$\bullet$]
\item For $0<\rho<\rho^*$, $\alpha=0$ is the unique solution of Equations \eqref{compatibilityc1} and \eqref{compatibilityc2} and therefore the only equilibrium is the uniform equilibrium $f^\mathrm{eq}=\rho$.
\item For $\rho=\rho^*$, in addition to the uniform equilibrium, there is a family of anisotropic equilibria given by $f^\mathrm{eq}=\rho^* M_{\alpha^* \Lambda}$ where $\Lambda\in SO_3(\R)$ and $\alpha^*=\rho^* c_1(\alpha^*)$.
\item For $\rho^*<\rho<\rho_c$, the compatibility equation \eqref{compatibilityc1} has two solutions $\alpha_+$ and $\alpha_-$ with $0<\alpha_-<\alpha_+$ which give, in addition to the uniform equilibrium, two families of anisotropic equilibria : $f^\mathrm{eq}=\rho M_{\alpha_+\Lambda}$ and $f^\mathrm{eq}=\rho M_{\alpha_-\Lambda}$ with $\Lambda\in SO_3(\R)$.
\item For $\rho=\rho_c$, we have $\alpha_-=0$.
\item For $\rho>\rho_c$, Equation \eqref{compatibilityc1} has two solutions $\alpha_3<0<\alpha_1$ which give two families of anisotropic equilibria $f^\mathrm{eq}=\rho M_{\alpha_3\Lambda}$ and $f^\mathrm{eq}=\rho M_{\alpha_1\Lambda}$ with $\Lambda\in SO_3(\R)$. Moreover, Equation \eqref{compatibilityc2} has two solutions $-\alpha_2<0<\alpha_2$ which give another family of equilibria: $f^\mathrm{eq}=\rho M_{\alpha_2 B}$ where $B\in\mathscr{B}$. The uniform equilibrium is always an equilibrium. 
\end{enumerate}
\end{corollary}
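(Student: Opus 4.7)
The corollary is essentially a bookkeeping consequence of Theorem~\ref{solutionscompatibility} combined with the qualitative information in Proposition~\ref{alphasurc}. My plan is to determine, for each value of $\rho$, the number and sign of the nonzero solutions of each of the two scalar compatibility equations \eqref{compatibilityc1} and \eqref{compatibilityc2}, then translate this back to equilibria via Theorem~\ref{solutionscompatibility}. For $\alpha\ne0$, the equations are equivalent to
\[\rho=\frac{\alpha}{c_1(\alpha)}\qquad\text{and}\qquad \rho=\frac{\alpha}{c_2(\alpha)},\]
so the count reduces to how many times the horizontal line $\{y=\rho\}$ meets the graphs of these two functions.

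First I would treat the $c_1$ equation. By Proposition~\ref{alphasurc}(i), the function $\alpha\mapsto\alpha/c_1(\alpha)$ is strictly decreasing on $(-\infty,\alpha^*]$, attains its global minimum $\rho^*$ at $\alpha=\alpha^*>0$, and is strictly increasing on $[\alpha^*,+\infty)$. It equals $\rho_c$ at $\alpha=0$, and goes to $+\infty$ as $\alpha\to+\infty$ by Proposition~\ref{alphasurc}(iii). Its behavior as $\alpha\to-\infty$ is not explicitly stated in Proposition~\ref{alphasurc}, so I would record a short lemma: from the explicit formula for $c_1$ (Lemma~\ref{c1c2}) the density~\eqref{PDF1} concentrates at $\theta=\pi$ as $\alpha\to-\infty$, whence $c_1(\alpha)\to-\tfrac13$ and hence $\alpha/c_1(\alpha)\to+\infty$. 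The graph is therefore U-shaped with a strict minimum $\rho^*$ at $\alpha^*$ and passes through $(0,\rho_c)$ on the descending branch. Cutting by $\{y=\rho\}$ then yields: no nonzero solution for $\rho<\rho^*$; a single double root $\alpha^*$ for $\rho=\rho^*$; two roots $0<\alpha_-<\alpha^*<\alpha_+$ for $\rho^*<\rho<\rho_c$ (both positive because the descending branch lies above $\rho_c$ on $(-\infty,0]$); two roots with $\alpha_-=0$ and $\alpha_+>\alpha^*$ for $\rho=\rho_c$; and two roots $\alpha_3<0<\alpha_1$ for $\rho>\rho_c$.

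Next I would treat the $c_2$ equation. Proposition~\ref{alphasurc}(ii) says $\alpha\mapsto\alpha/c_2(\alpha)$ is even with unique global minimum $\rho_c$ at $\alpha=0$, and Proposition~\ref{alphasurc}(iii) gives divergence at $+\infty$ (hence at $-\infty$ by symmetry). Thus $\{y=\rho\}$ intersects its graph at no nonzero point when $\rho\le\rho_c$, and at exactly two symmetric points $\pm\alpha_2$ when $\rho>\rho_c$. Before concluding I would observe that these two sign-opposite solutions yield the same family of equilibria: the set $\mathscr{B}$ is stable under $B\mapsto -B$ (writing $-p\otimes q=(-p)\otimes q$ with $-p\in\Sph^2$), so $\{M_{-\alpha_2 B}:B\in\mathscr{B}\}=\{M_{\alpha_2 B}:B\in\mathscr{B}\}$. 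In contrast, the $c_1$ equation's solutions $\alpha_3<0<\alpha_1$ give genuinely distinct families, since $SO_3(\R)$ is not stable under $\Lambda\mapsto-\Lambda$.

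Assembling these two counts inside the five regimes $\rho\in(0,\rho^*)$, $\rho=\rho^*$, $\rho\in(\rho^*,\rho_c)$, $\rho=\rho_c$, $\rho>\rho_c$, and adding in the uniform equilibrium $f^{\mathrm{eq}}=\rho$ corresponding to $J=0$ (which is always present by Theorem~\ref{solutionscompatibility}), gives precisely the statement of the corollary. There is no real difficulty in the argument, the only nontrivial input beyond Proposition~\ref{alphasurc} being the limit $c_1(\alpha)\to-\tfrac13$ at $-\infty$ needed to locate $\alpha_3$ on the negative half-line for $\rho>\rho_c$; this is a one-line Laplace-method computation on the integral formula for $c_1$.
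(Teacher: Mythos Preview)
Your proposal is correct and follows precisely the approach the paper intends: the paper does not give an explicit proof of this corollary but simply presents it as a direct consequence of Theorem~\ref{solutionscompatibility} and Proposition~\ref{alphasurc}, which is exactly the route you take. Your argument is in fact more careful than the paper in one respect: you correctly note that the behaviour of $\alpha/c_1(\alpha)$ as $\alpha\to-\infty$ is not stated in Proposition~\ref{alphasurc} and supply the missing one-line Laplace computation $c_1(\alpha)\to-\tfrac13$, which is indeed needed to guarantee the existence of the negative root $\alpha_3$ for every $\rho>\rho_c$.
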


When an equilibrium is of the form $f^\mathrm{eq}=\rho M_{\alpha\Lambda}$ with parameters $\alpha>0$ and $\Lambda\in SO_3(\R)$ then these parameters can respectively be interpreted as a concentration parameter and a mean body-orientation. They are analogous to the equilibria found in \cite{degondfrouvelleliu15} in the Vicsek case. However, in $SO_3(\R)$, there exist other equilibria which are not of this form. We will see in Section \ref{convergence} that these latter equilibria are always unstable.

\begin{figure}[H]
\centering
\includegraphics[width=0.85\textwidth]{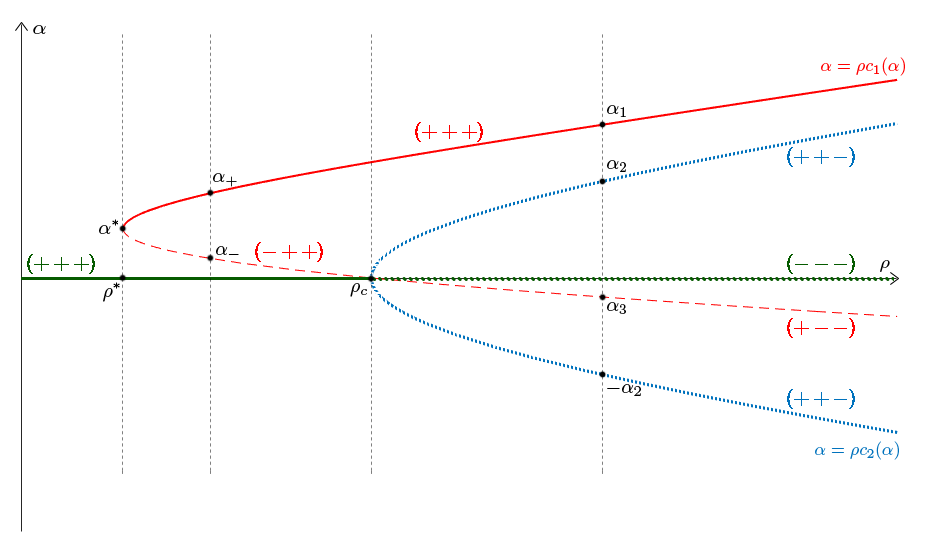}
\caption{Phase diagram for the equilibria of the BGK operator \eqref{bgkoperator}. Depending on the density, there are one, two, three or four branches of equilibria ($\alpha_2$ and $-\alpha_2$ give the same orbit). The uniform equilibrium $f^\mathrm{eq}=\rho$ is always an equilibrium (corresponding to $\alpha=0$, depicted in green). The equilibria of the form $f^\mathrm{eq}=\rho M_{\alpha\Lambda}$, $\Lambda\in SO_3(\R)$ exist for $\rho>\rho^*$ and correspond to the two branches of the red curve $\alpha=\rho c_1(\alpha)$. Finally the equilibria of the form $f^\mathrm{eq}=\rho M_{\alpha B}$, $B\in\mathscr{B}$ exist for $\rho>\rho_c$ and correspond to the two branches of the blue curve $\alpha=\rho c_2(\alpha)$. The dotted and dashed lines correspond to unstable equilibria (as shown in Section \ref{convergence}). The signs are the signature of the Hessian matrix $\Hess V(D)$ defined in Section \ref{convergence} taken at an equilibrium point. The elements $\alpha^*$, $\rho^*$ and $\rho_c$ are defined in Proposition \ref{alphasurc}; the elements $\alpha_+$, $\alpha_-$, $\alpha_1$, $\alpha_2$ and $\alpha_3$ are given in Corollary \ref{phasediagramcorollary}.}
\label{phasediagram}
\end{figure}

Finally the following picture (Figure \ref{dessinequilibres}) is a representation in the space $\R^3$ of the diagonal parts of the SSVDs of the solution of the matrix compatibility equation \eqref{compatibility} when $\rho>\rho_c$. They all belong to the domain $\mathscr{D}$ defined by \eqref{diagssvd} and depicted in orange in Figure \ref{dessinequilibres}.

\begin{figure}[H]
\centering
\includegraphics[width=0.75\textwidth]{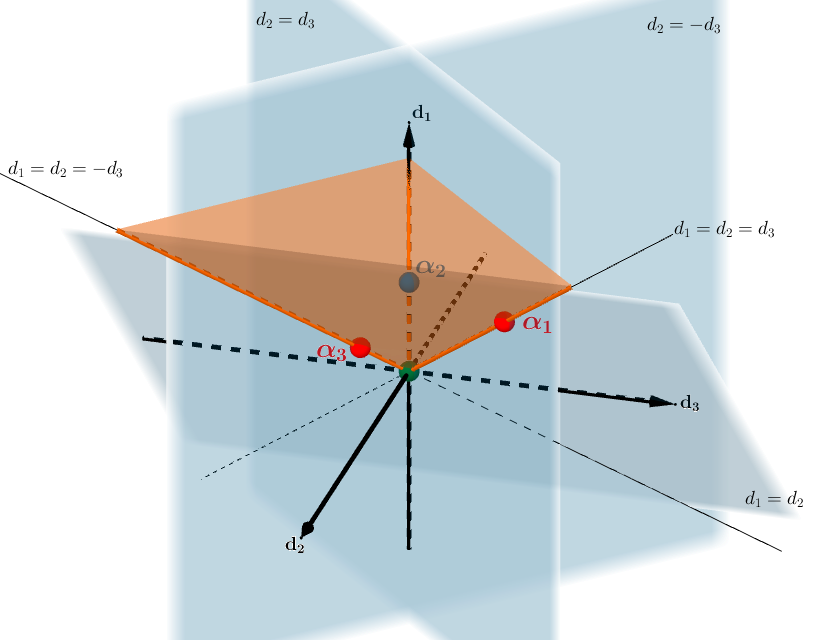}
\caption{The 4 diagonal parts of the SSVDs of the diagonal equilibria seen as elements of the space $\R^3$ for $\rho>\rho_c$, as described in Corollary \ref{phasediagramcorollary}. The ones with non zero determinant are in red (type \textit{(b)} in Proposition \ref{necessarycondition}), the non-zero one with determinant equal to zero is in blue (type \textit{(c)}) and the matrix 0 is in green. They all lie in the domain $\diag^{-1}(\mathscr{D})$ depicted in orange and delimited by the three blue planes $\{d_1=d_2\}$, $\{d_2=d_3\}$ and $\{d_2=-d_3\}$.}
\label{dessinequilibres}
\end{figure}

\section{Convergence to equilibria}\label{convergence}
Now that we know all the equilibria of the spatially homogeneous BGK equation \eqref{bgkh} we proceed to investigate the asymptotic behaviour of $f(t,A)$ as $t\to+\infty$. This problem can be reduced to looking at the asymptotic behaviour of $J_f$ since, if $J_f\to J_\infty\in\MM_3(\R)$, then $f(t)$ will converge as $t\to+\infty$ towards $\rho M_{J_\infty}$ as it can be seen by writing Duhamel's formula for equation \eqref{bgkh}~: 
\begin{equation}\label{duhamel}f(t)=e^{-t}f_0+\rho\int_0^t e^{-(t-s)}M_{J_{f(s)}}\,ds.\end{equation}
The asymptotic behaviour of $J_f$ is much simpler than the one of $f$ since $J_f$ is the solution of the following ODE
\begin{equation}\label{ODEJ}
\frac{d}{dt}J_f=\rho\langle A\rangle_{M_{J_f}}-J_f,\,\,\,\,\,\,J_f(t=0)=J_{f_0},
\end{equation}
as it can be seen by multiplying \eqref{bgkh} by $A\in SO_3(\R)$ and integrating over $SO_3(\R)$. Since $J\in\MM_3(\R)\mapsto M_J\in L^\infty(SO_3(\R))$ is locally Lipschitz, the flow of Equation \eqref{ODEJ} is defined globally in time as a bounded Lipschitz perturbation of the linear system $\frac{d}{dt}J=-J$. \\

Notice that the solutions of the compatibility equation \eqref{compatibility} are exactly the equilibria of the dynamical system \eqref{ODEJ}. We therefore obtain the following proposition:
\begin{proposition}[Equilibria of the BGK operator, equilibria of the ODE \eqref{ODEJ}] A distribution $f^\mathrm{eq}=\rho M_J$ is an equilibrium of the BGK operator \eqref{bgkoperator} if and only if $J\in\MM_3(\R)$ is an equilibrium of the dynamical system \eqref{ODEJ}.\end{proposition}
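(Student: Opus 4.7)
The plan is to reduce the proposition directly to Theorem \ref{solutionscompatibility}, which has already characterized all BGK equilibria through the matrix compatibility equation \eqref{compatibility}. Indeed, the two equilibrium notions are literally governed by the same condition on $J$: the ODE \eqref{ODEJ} is stationary at $J$ if and only if $\rho\langle A\rangle_{M_J} - J = 0$, which is exactly \eqref{compatibility}; and Theorem \ref{solutionscompatibility} states that $\rho M_J$ is a BGK equilibrium if and only if $J$ solves \eqref{compatibility}. The result is therefore a rereading of definitions once Theorem \ref{solutionscompatibility} is in hand.

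Concretely, I would proceed as follows. Given $f = \rho M_J$, one first computes the flux
\[J_f = \rho \int_{SO_3(\R)} A\,M_J(A)\,dA = \rho\langle A\rangle_{M_J},\]
which is an identity independent of any equilibrium hypothesis. The BGK stationarity condition $Q_{BGK}(f) = 0$ then reads $\rho M_{\rho\langle A\rangle_{M_J}} = \rho M_J$. If $J$ is an ODE equilibrium, i.e. $J = \rho\langle A\rangle_{M_J}$, the two sides are trivially equal, so $\rho M_J$ is a BGK equilibrium. For the converse, Theorem \ref{solutionscompatibility} tells us that a BGK equilibrium of the form $\rho M_J$ forces $J$ to solve the compatibility equation, which is precisely the ODE stationarity condition.

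No real obstacle is expected, since the statement is essentially tautological given the work already done. If one wished to bypass Theorem \ref{solutionscompatibility} and argue directly for the nontrivial direction, the single ingredient needed would be the injectivity of the map $J \mapsto M_J$, which allows passing from $M_{\rho\langle A\rangle_{M_J}} = M_J$ to $\rho\langle A\rangle_{M_J} = J$. This injectivity is elementary: if $M_{J_1} = M_{J_2}$, then $(J_1 - J_2)\cdot A$ is constant over $A \in SO_3(\R)$, and taking differences $(J_1 - J_2)\cdot(A - A') = 0$ together with the fact that $\{A - A' : A,A' \in SO_3(\R)\}$ spans $\MM_3(\R)$ (for instance via $I_3$, the $D^{ij}$, and planar rotations) forces $J_1 = J_2$.
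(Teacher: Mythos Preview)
Your proposal is correct and matches the paper's approach exactly: the paper simply observes, in the sentence preceding the proposition, that ``the solutions of the compatibility equation \eqref{compatibility} are exactly the equilibria of the dynamical system \eqref{ODEJ},'' and the proposition then follows immediately from Theorem \ref{solutionscompatibility}. Your additional remark on the injectivity of $J\mapsto M_J$ is a valid alternative for the converse direction but, as you note, is not needed once Theorem \ref{solutionscompatibility} is available.
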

We will call stable/unstable an equilibrium of the BGK operator \eqref{bgkoperator} such that the associated matrix $J\in\MM_3(\R)$ is a stable/unstable equilibrium of the ODE \eqref{ODEJ}. This section is devoted to the proof of the following theorem:
\begin{theorem}[Convergence towards equilibria]\label{theoremconvergence} Let $\rho\in\R_+$ be such that $\rho\ne\rho^*$ and $\rho~\ne~\rho_c$ (as defined in Proposition \ref{alphasurc}). Let $f_0$ be an initial condition for \eqref{bgkh} and let $J_{f_0}~=~PD_0Q$ be a SSVD. Let $f(t)$ be the solution at time $t\in\R_+$ of the spatially homogeneous BGK equation \eqref{bgkh} with initial condition $f_0$. Let $D(t)$ be the solution at time $t\in\R_+$ of the ODE \eqref{ODEJ} with initial condition $D_0\in\mathscr{D}$. It holds that: 
\[J_{f(t)}=PD(t)Q\]
is a SSVD and there exists a subset $\mathscr{N}_\rho\subset\R^3$ of zero Lebesgue measure such that:
\begin{enumerate}
\item if $D_0\notin\mathcal{N}_\rho$, then $f(t)$ converges as $t\to\infty$ towards an equilibrium $f^\mathrm{eq}$ of the BGK operator \eqref{bgkoperator} of the form $f^\mathrm{eq}=\rho M_{J^\mathrm{eq}}$, where $J^\mathrm{eq}\in\MM_3(\R)$ is of one of the forms described in Theorem \ref{solutionscompatibility}. The convergence is locally exponentially fast in the sense that there exists constants $\delta,K,\mu>0$ such that if $\|J_{f_0}-J^\mathrm{eq}\|\leq\delta$ then for all $t>0$,
\[\forall A\in SO_3(\R),\,\,\,|f(t,A)-f^\mathrm{eq}(A)|\leq e^{-\mu t}\Big(K\rho+|f_0(A)-f^\mathrm{eq}(A)|\Big).\]
\item If $D_0\notin\mathcal{N}_\rho$, we have the following asymptotic behaviours depending on the density $\rho$~:
\begin{enumerate}[(i)]
\item if $0<\rho<\rho^*$, then $D(t)\to0$ as $t\to+\infty$ and, consequently, $f^\mathrm{eq}=\rho$,
\item if $\rho^*<\rho<\rho_c$, then $D(t)\to0$ or $D(t)\to\alpha_+ I_3$ as $t\to+\infty$ and, consequently, $f^\mathrm{eq}=\rho$ or $f^\mathrm{eq}=\rho M_{\alpha_+\Lambda}$ respectively, where $\alpha_+>0$ is defined in corollary \ref{phasediagramcorollary} and $\Lambda:=PQ\in SO_3(\R)$,
\item if $\rho>\rho_c$, then $D(t)\to\alpha_1 I_3$ as $t\to+\infty$ and, consequently, $f^\mathrm{eq}=\rho M_{\alpha_1\Lambda}$ where $\alpha_1>0$ is defined in corollary \ref{phasediagramcorollary} and $\Lambda:=PQ\in SO_3(\R)$.
\end{enumerate}
\end{enumerate}
\end{theorem}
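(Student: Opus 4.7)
By Duhamel's formula \eqref{duhamel} and the local Lipschitz dependence of $J \mapsto M_J \in L^\infty(SO_3(\R))$, exponential convergence of $J_{f(t)}$ to $J^{\mathrm{eq}}$ in $\MM_3(\R)$ implies the claimed pointwise bound on $f(t,A) - f^{\mathrm{eq}}(A)$. So the heart of the matter is the ODE \eqref{ODEJ} on $\MM_3(\R)$, whose equilibria coincide with those of $Q_{BGK}$. The key observation is that this ODE is a gradient flow:
\[
\frac{d}{dt} J_f = -\nabla V(J_f), \qquad V(J) := \tfrac{1}{2}\|J\|^2 - \rho\,\log Z(J), \qquad Z(J) := \int_{SO_3(\R)} e^{J\cdot A}\,dA,
\]
since $\nabla_J \log Z(J) = \langle A\rangle_{M_J}$. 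The potential $V$ is real-analytic, and because $J \cdot A \leq \sqrt{3/2}\,\|J\|$ uniformly for $A \in SO_3(\R)$, one has $V(J) \geq \tfrac{1}{2}\|J\|^2 - \rho\sqrt{3/2}\,\|J\|$, so $V$ is coercive and trajectories stay bounded.

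\textbf{Diagonal reduction.} Using the bi-invariance of the Haar measure (Proposition \ref{equivalence}) together with Lemma \ref{diagonal}, I would show that if $J_{f_0} = P D_0 Q$ is an SSVD, then $J_{f(t)} = P D(t) Q$ for all $t \geq 0$, where $D(t) \in \mathscr{D}_3(\R)$ solves $\frac{d}{dt} D = \rho \langle A\rangle_{M_D} - D$. A short argument on the boundary of $\mathscr{D}$, comparing diagonal coefficients of $\langle A\rangle_{M_D}$ by means of the changes of variable of Definition \ref{changeofvariable}, confirms that $\mathscr{D}$ is positively invariant, so the SSVD form persists and $J_{f(t)} = P D(t) Q$ remains an SSVD. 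The problem is thus reduced to a three-dimensional gradient flow on $\mathscr{D}_3(\R) \cong \R^3$ driven by $V$ restricted to diagonal matrices, whose critical points are classified by Proposition \ref{necessarycondition}.

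\textbf{Hessian signatures and identification of stable equilibria.} To match cases $(i)$--$(iii)$ and the dotted/dashed branches of Figure \ref{phasediagram}, I would compute $\Hess V$ at each diagonal equilibrium. At $D = 0$, Lemma \ref{JAA} gives $\nabla^2 \log Z(0) = \tfrac{1}{6}I_3$, hence $\Hess V(0) = (1 - \rho/\rho_c) I_3$ with $\rho_c = 6$: positive definite for $\rho < \rho_c$, negative definite for $\rho > \rho_c$. At a type-$(b)$ equilibrium $D = \alpha I_3$, expanding $D \mapsto \langle A\rangle_{M_D}$ to first order via Lemma \ref{JAAg} (applied to $g = M_{\alpha I_3}$, which has the required bi-invariance), the eigenvalue along the isotropic direction $I_3$ is proportional to $\frac{d}{d\alpha}(\alpha/c_1(\alpha))$, positive exactly on the branches $\alpha_+$ and $\alpha_1$ by Proposition \ref{alphasurc}(i), while the transverse eigenvalues are positive whenever $\alpha > 0$. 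For the negative-$\alpha$ and type-$(c)$ equilibria, a similar expansion produces at least one strictly negative eigenvalue and hence a saddle. This yields precisely the list of stable equilibria: $f^{\mathrm{eq}} = \rho$ for $\rho < \rho_c$, and $f^{\mathrm{eq}} = \rho M_{\alpha \Lambda}$ with $\alpha \in \{\alpha_+,\alpha_1\}$ and $\Lambda = PQ$ otherwise.

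\textbf{Convergence, exceptional set and transfer to $f$.} Real-analyticity of $V$ and precompactness of $\{D(t)\}$ allow the \L{}ojasiewicz--Simon inequality to force $D(t)$ to converge to a single critical point of $V$. Standard stable-manifold theory for gradient flows at hyperbolic critical points then shows that the basin of attraction of any equilibrium with $\Hess V$ having a negative eigenvalue is locally contained in an embedded submanifold of $\mathscr{D}_3(\R)$ of dimension strictly less than $3$; taking the union over the finitely many such equilibria yields a Lebesgue null set $\mathscr{N}_\rho \subset \R^3$. For $D_0 \notin \mathscr{N}_\rho$, the flow converges to a local minimum of $V$, and the spectral gap of $\Hess V$ at this minimum provides local exponential convergence of $D(t)$, which pulls back through the SSVD to exponential convergence of $J_{f(t)}$ and, via Duhamel, to the pointwise estimate on $f(t,A) - f^{\mathrm{eq}}(A)$. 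The main technical obstacle is the Hessian computation at the non-isotropic equilibria of types $(b)$ and $(c)$: expanding $\langle A\rangle_{M_D}$ is not covered by Lemma \ref{JAA} and requires the more delicate invariance arguments of Lemma \ref{JAAg}, together with the symmetries of Definition \ref{changeofvariable}, to block-diagonalise $\Hess V$ in the right basis and relate its eigenvalues to the monotonicity of $\alpha/c_1(\alpha)$ and $\alpha/c_2(\alpha)$ established in Proposition \ref{alphasurc}.
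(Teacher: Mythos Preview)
Your proposal is correct and follows essentially the same route as the paper: reduce to a three-dimensional gradient flow via the SSVD and Haar bi-invariance, classify the stable equilibria by Hessian signature, use stable-manifold theory to get the null exceptional set, and transfer the exponential rate back to $f$ via Duhamel and the Lipschitz dependence of $J\mapsto M_J$. Two minor remarks. First, you invoke the \L{}ojasiewicz--Simon inequality to force convergence to a single critical point; the paper instead uses that for $\rho\ne\rho^*,\rho_c$ all diagonal equilibria are hyperbolic and hence isolated, which already forces the $\omega$-limit set to be a point --- your argument is valid but heavier than needed here. Second, you assert that the transverse eigenvalues of $\Hess V(\alpha I_3)$ are positive for $\alpha>0$; this is precisely the content of the paper's Lemma~\ref{complique}, and it does \emph{not} follow from Lemma~\ref{JAAg} or Proposition~\ref{alphasurc} alone --- it requires an explicit sign analysis of an integral over $[0,\pi]$ (splitting at $\theta_0=\arccos(-2/3)$), which is the one genuinely non-formal step you have deferred.
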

\begin{rem}
The subset $\mathscr{N}_\rho\subset\R^3$ depends on the density $\rho$. This subset will be made explicit in the three cases $\rho<\rho^*$, $\rho^*<\rho<\rho_c$ and $\rho>\rho_c$ in Section \ref{conclusioncompliquee}. If $D_0\in\mathscr{N}_\rho$, then there is convergence towards an unstable equilibrium at a rate which may not be exponential.
\end{rem}

\begin{rem}[Phase transitions] 
Theorem \ref{theoremconvergence} demonstrates a phase transition phenomenon triggered by the density of agents $\rho$~: when $\rho<\rho^*$, the system is disordered (asymptotically in time) in the sense that $J_f\to0$ and we therefore cannot define a mean body-attitude. When the density increases and exceeds the critical value $\rho_c$, the system is self-organised (asymptotically in time and for almost every initial data), in the sense that $J_f\to\alpha\Lambda$ where $\alpha\in\R_+$ and $\Lambda\in SO_3(\R)$  can be respectively interpreted as a concentration parameter and a mean body attitude. When $\rho^*<\rho<\rho_c$ the self-organised and disordered states are both asymptotically stable and the convergence towards one or the other state depends on the initial data. Such ``transition region'' also appears in the Vicsek model, as studied in \cite{degondfrouvelleliu15}, and gives rise to an hysteresis phenomenon. 
\end{rem}

The proof of this theorem can be found at the end of Section \ref{paragraphstability}. It is based on a gradient flow structure for the flux $J_f$ studied in Section \ref{gradientflowstructure}. This structure ensures the convergence of $J_f$ towards a matrix $J^\mathrm{eq}\in\MM_3(\R)$ as $t\to+\infty$ and consequently the convergence of $f(t)$ as $t\to+\infty$ towards an equilibrium. The stability of the equilibria determines which equilibrium can be attained. This question is addressed in Section \ref{paragraphstability}. Additional details about the subset $\mathscr{N}_\rho$ as well as the study of the critical cases $\rho=\rho^*$ and $\rho=\rho_c$ are provided in Section \ref{conclusioncompliquee}.

\subsection{A gradient-flow structure in $\R^3$}\label{gradientflowstructure}

In this section we show that the ODE \eqref{ODEJ} can be reduced to a gradient-flow ODE in $\R^3$. We first show (Section \ref{subsubreduction}) how \eqref{ODEJ} can be reduced to an ODE in $\R^3$, the equilibria of which are linked to the equilibria of \eqref{ODEJ} (and therefore of \eqref{bgkh}). Then we show that this ODE in $\R^3$ has a gradient-flow structure which will allow us to conclude on the asymptotic behaviour of the solution of \eqref{bgkh} (Section \ref{subsubgradient}).

\subsubsection{Reduction to a nonlinear ODE in $\R^3$ and equilibria}\label{subsubreduction}
The ODE \eqref{ODEJ} is a matrix-valued nonlinear ODE (in dimension 9) but, as in the previous section (Proposition \ref{equivalence} and Corollary \ref{equivalenceSSVD}), we will use the left and right invariance of the Haar measure and the SSVD to reduce the problem to a vector-valued nonlinear ODE in dimension 3, as explained in Proposition \ref{reductionODE} and Corollary \ref{ODEssvd}.
\begin{proposition}[Reduction to a nonlinear ODE in dimension 3]\label{reductionODE} Let $J_0\in\MM_3(\R)$ be a given matrix and let $D_0\in\Orb(J_0)$ be diagonal. Let $P,Q\in SO_3(\R)$ such that $J_0=PD_0Q$. Let $J:[0,\infty)\to\MM_3(\R)$ be a $C^1$ curve in $\MM_3(\R)$ with $J(0)=J_0$. For all $t>0$, let $D(t)\in\MM_3(\R)$ be the matrix such that $J(t)=PD(t)Q$. It holds that:
\begin{enumerate}[(i)]
\item $J=J(t)$ is the solution of the ODE \eqref{ODEJ} with initial condition $J(t=0)=J_0$ if and only if $D=D(t)$ is the solution of the same ODE \eqref{ODEJ} with initial condition $D(t=0)=D_0$. 
\item Moreover, if \textit{(i)} holds, then the matrix $D(t)\in\MM_3(\R)$ is diagonal for all time.
\end{enumerate}
\end{proposition}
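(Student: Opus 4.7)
The plan is to prove both parts by exploiting the left--right invariance of the Haar measure, which was already used to establish the orbital reduction in Proposition \ref{equivalence}. The key covariance property to extract first is
\[
\langle A\rangle_{M_{PJQ}}=P\,\langle A\rangle_{M_J}\,Q\qquad\text{for all }P,Q\in SO_3(\R),\,J\in\MM_3(\R),
\]
which follows from the change of variable $A\mapsto P^{T}AQ^{T}$ together with the identity $PJQ\cdot PA'Q=J\cdot A'$ coming from \eqref{produitscalaire} and the cyclicity of the trace.

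For part \textit{(i)}, since $P$ and $Q$ are constants in $SO_3(\R)$, differentiating $J(t)=PD(t)Q$ gives $\frac{d}{dt}J=P\,\frac{d}{dt}D\,Q$. Substituting into the ODE \eqref{ODEJ} and using the covariance above yields
\[
P\,\frac{d}{dt}D\,Q=\rho P\langle A\rangle_{M_D}Q-PDQ=P\bigl(\rho\langle A\rangle_{M_D}-D\bigr)Q.
\]
Left-multiplying by $P^{T}$ and right-multiplying by $Q^{T}$ (using $P^{T}P=Q\,Q^{T}=I_3$) produces exactly the ODE \eqref{ODEJ} for $D$, with initial condition $D(0)=P^{T}J_0Q^{T}=D_0$. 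The reverse implication is identical, read backwards.

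For part \textit{(ii)}, the strategy is to observe that the vector field $F(M):=\rho\langle A\rangle_{M_M}-M$ leaves the subspace $\mathscr{D}_3(\R)$ of diagonal matrices invariant: indeed, Lemma \ref{diagonal} states precisely that $\langle A\rangle_{M_D}$ is diagonal whenever $D$ is diagonal, so $F$ maps $\mathscr{D}_3(\R)$ into itself. Since $F$ is (locally) Lipschitz on $\MM_3(\R)$ (a fact already used right after \eqref{ODEJ} to obtain global existence), the Cauchy--Lipschitz theorem guarantees uniqueness of the solution of \eqref{ODEJ} through $D_0$. The restricted ODE on $\mathscr{D}_3(\R)$ admits a solution starting at $D_0$, and by uniqueness in $\MM_3(\R)$ this must coincide with $D(t)$; hence $D(t)\in\mathscr{D}_3(\R)$ for all $t\geq0$.

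I do not expect any real obstacle here: the whole argument is essentially an equivariance statement plus invariance of a linear subspace under a Lipschitz vector field. The only point that deserves a line of explicit verification is the computation $PJQ\cdot PA'Q=J\cdot A'$ underlying the covariance formula, but this is immediate from the definition of the inner product \eqref{produitscalaire}.
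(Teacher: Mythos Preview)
Your proof is correct and follows essentially the same approach as the paper: part \textit{(i)} via the left--right invariance of the Haar measure (the paper states the equivariance without writing out the covariance formula $\langle A\rangle_{M_{PJQ}}=P\langle A\rangle_{M_J}Q$ as you do), and part \textit{(ii)} via Lemma~\ref{diagonal}. Your version is simply more explicit, in particular by invoking Cauchy--Lipschitz uniqueness to conclude that the solution remains in the invariant subspace $\mathscr{D}_3(\R)$, a step the paper leaves implicit.
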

\begin{proof}
\begin{enumerate}[(i)]
\item Using the left and right invariance of the Haar measure, we see that if the matrix $J(t)\in\MM_3(\R)$ is a solution of \eqref{ODEJ}, then for any $P,Q\in SO_3(\R)$, $PJ(t)Q$ is also a solution (and conversely).
\item Since $D_0$ is diagonal, the fact that $D(t)$ is also diagonal is a consequence of lemma \ref{diagonal} which states that $\langle A\rangle_{M_D}$ is diagonal when $D$ is diagonal.
\end{enumerate}
\end{proof}
For any matrix $J_0$, such a diagonal matrix $D_0$ always exists: we can take the diagonal part of the SSVD of $J_0$. We therefore only have to study the following ODE for diagonal matrices $\mathscr{D}_3(\R)$. Since this vector space is isomorphic to $\R^3$ through the isomorphism $\diag$ defined in the introduction, we obtain two equivalent ODEs: 
\begin{subequations}
\begin{align}
\label{ODED}\frac{d}{dt}D(t)&=\rho\langle A\rangle_{M_{D(t)}}-D(t),\,\,\,\,\,\,D(t=0)=D_0,\\
\label{ODEDR3}\frac{d}{dt}\widehat{D}(t)&=\rho\diag^{-1}\left(\langle A\rangle_{M_{\diag(\widehat{D}(t))}}\right)-\widehat{D}(t),\,\,\,\,\,\,\widehat{D}(t=0)=\widehat{D}_0,
\end{align}
\end{subequations}
where $\widehat{D}_0=\diag^{-1}(D_0)$ and the following equivalence holds : $\widehat{D}(t)=\diag^{-1}(D(t))$ is the solution of \eqref{ODEDR3} if and only if $D(t)=\diag(\widehat{D}(t))$ is the solution of \eqref{ODED}.\\

Note that it is not clear that if $J_0=PD_0 Q$ is a SSVD, then $J(t)=PD(t)Q$ is a SSVD for all $t>0$. The following proposition and corollary ensure that the SSVD is preserved by the dynamical system which will allow us to restrict the domain on which the ODE \eqref{ODED} is posed. 
\begin{proposition}[Invariant manifolds]\label{invariantplanes}
The following subsets of $\R^3$ are invariant manifolds of the dynamical system \eqref{ODEDR3}~:
\begin{enumerate}[$\bullet$]
\item the planes $\big\{(d_1,d_2,d_3)\in\R^3,\,\,d_i+d_j=0\big\}$ for $i\ne j\in\{1,2,3\}$,
\item the planes $\big\{(d_1,d_2,d_3)\in\R^3,\,\,d_i-d_j=0\big\}$ for $i\ne j\in\{1,2,3\}$,
\item the intersections of two of these planes and in particular the lines 
\[\R\left(\begin{array}{c}1\\1\\1\end{array}\right),\,\,\,\,\,\R\left(\begin{array}{c}1\\0\\0\end{array}\right),\,\,\,\,\,\R\left(\begin{array}{c}1\\1\\-1\end{array}\right)\ldots\]
\end{enumerate}
\end{proposition}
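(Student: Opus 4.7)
The plan is to verify in each case that the vector field of \eqref{ODEDR3} is tangent to the invariant set. Writing the ODE componentwise as $\dot d_k = \rho\langle a_{kk}\rangle_{M_D}-d_k$, invariance of a plane $\{L(d_1,d_2,d_3)=0\}$ reduces to the implication: if $L(d_1,d_2,d_3)=0$, then $L(\langle a_{11}\rangle_{M_D},\langle a_{22}\rangle_{M_D},\langle a_{33}\rangle_{M_D})=0$. For each family of planes I would exhibit a Haar-measure-preserving change of variable on $SO_3(\R)$, built from the elementary bijections of Definition \ref{changeofvariable}, that preserves $M_D$ on the plane while permuting (possibly with a sign) the diagonal entries of $A$ in the right way.

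For the planes $\{d_i=d_j\}$, I would use $\Phi(A)=P^{ij}A(P^{ij})^T$. Cyclicity of the trace gives $D\cdot\Phi(A)=\widetilde D\cdot A$, where $\widetilde D$ is obtained from $D$ by swapping the $(i,i)$ and $(j,j)$ entries, and a direct computation shows $\Phi(A)_{ii}=a_{jj}$. On the plane one has $\widetilde D=D$, so the change of variable preserves $M_D$ and exchanges $a_{ii}$ with $a_{jj}$ inside the integral, yielding $\langle a_{ii}\rangle_{M_D}=\langle a_{jj}\rangle_{M_D}$.

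The key case is $\{d_i+d_j=0\}$, where a pure conjugation by $P^{ij}$ no longer preserves $M_D$. I would combine the swap with a sign flip and take $\Phi(A)=D^{ij}P^{ij}A(P^{ij})^T$; since $D^{ij}P^{ij}\in SO_3(\R)$, the Haar measure is again preserved. The same cyclicity calculation now yields $D\cdot\Phi(A)=\widetilde D\cdot A$ with $\widetilde D$ diagonal, equal to $-d_j,-d_i,d_k$ at positions $i,j,k$, while $\Phi(A)_{ii}=-a_{jj}$. Restricting to $\{d_i+d_j=0\}$ one has $\widetilde D=D$ and we obtain $\langle a_{ii}\rangle_{M_D}+\langle a_{jj}\rangle_{M_D}=0$, which is the desired tangency condition.

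Finally, being an invariant manifold is trivially stable under finite intersection: a trajectory starting in $\mathcal{M}_1\cap\mathcal{M}_2$ remains in each $\mathcal{M}_k$, hence in their intersection. The lines displayed in the statement are each the intersection of two of the planes treated above, so they inherit invariance. The only genuinely non-routine step is identifying the right symmetry for the anti-diagonal planes $\{d_i+d_j=0\}$: one must insert $D^{ij}$ on a single side of the conjugation by $P^{ij}$ in order to match both the antisymmetry in $d_i,d_j$ and the required change of sign of the diagonal entries of $A$.
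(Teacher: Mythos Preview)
Your proof is correct and follows essentially the same approach as the paper. The paper's proof simply refers back to the second point of Lemma~\ref{c1c2}, where the case $i=2$, $j=3$ is handled by the composed change of variable $A\mapsto P^{23}A(P^{23})^T$ followed by $A\mapsto D^{23}A$---which is exactly your map $\Phi(A)=D^{ij}P^{ij}A(P^{ij})^T$---and declares the other cases similar; you have merely written out in full what the paper leaves implicit.
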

\begin{proof}
For $i=2$ and $j=3$, the result has already been proved in the second point of Lemma \ref{c1c2}. The other cases are similar.\end{proof}

\begin{corollary}\label{ODEssvd} Let $J_0\in\MM_3(\R)$ and $J_0=PD_0Q$ be a SSVD with $P,Q\in SO_3(\R)$ and $D_0\in\mathscr{D}_3(\R)$ diagonal. Let $J(t)$ be the solution of the ODE \eqref{ODEJ} with initial condition $J(t=0)=J_0$. Let $D(t)$ the solution of the ODE \eqref{ODED} with initial condition $D(t=0)=D_0$. Then the decomposition $J(t)=PD(t)Q$ is a SSVD for $J(t)$.
\end{corollary}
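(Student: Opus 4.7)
The plan is to show that the subset $\mathscr{D}\subset\mathscr{D}_3(\R)$ defined by \eqref{diagssvd} is invariant under the flow of the reduced ODE \eqref{ODEDR3}. Combined with Proposition~\ref{reductionODE}, which already provides the factorisation $J(t)=PD(t)Q$ with $D(t)$ diagonal for all $t\geq 0$, this will immediately yield the SSVD property for $J(t)$. So the only thing left to verify is that the ordering $d_1(t)\geq d_2(t)\geq |d_3(t)|$ characterising $\mathscr{D}$ is preserved by the dynamics whenever it is satisfied initially.

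The key observation is that the (relative) boundary of $\mathscr{D}$ inside $\mathscr{D}_3(\R)\cong\R^3$ is contained in the union of the three planes $\{d_1=d_2\}$, $\{d_2=d_3\}$ and $\{d_2=-d_3\}$, each of which is an invariant manifold of \eqref{ODEDR3} by Proposition~\ref{invariantplanes}. Because the right-hand side of \eqref{ODEDR3} depends smoothly on $D$ (the map $D\mapsto\langle A\rangle_{M_D}$ is even real-analytic in the entries of $D$), the ODE has unique solutions both forward and backward in time. Hence a trajectory starting in the interior $\mathscr{D}^\circ=\{d_1>d_2>|d_3|\}$ cannot ever touch any of these three planes: if it did at some $t_0>0$, backward uniqueness would force the whole trajectory, and in particular $D_0=D(0)$, to lie on that plane, contradicting $D_0\in\mathscr{D}^\circ$. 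Thus for $D_0\in\mathscr{D}^\circ$ the solution $D(t)$ stays in $\mathscr{D}^\circ\subset\mathscr{D}$ for all $t\geq 0$.

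For the boundary case $D_0\in\partial\mathscr{D}$, I would use a density argument: approximate $D_0$ by a sequence $D_0^n\in\mathscr{D}^\circ$ with $D_0^n\to D_0$ (possible since $\mathscr{D}^\circ$ is dense in the closed convex cone $\mathscr{D}$), let $D^n(t)$ be the corresponding solutions of \eqref{ODED} starting from $D_0^n$, each of which belongs to $\mathscr{D}$ by the previous step, and pass to the limit using continuous dependence on initial data on compact time intervals. Since $\mathscr{D}$ is closed, the limit $D(t)$ also lies in $\mathscr{D}$. The main conceptual point is the passage from the forward-invariance of the three planes to a non-crossing property for trajectories, which is provided by the backward uniqueness of the smooth ODE; this is the only ingredient needed beyond Propositions~\ref{reductionODE} and~\ref{invariantplanes}, and is not a serious obstacle.
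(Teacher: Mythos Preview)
Your proof is correct and follows the same route as the paper: use Proposition~\ref{reductionODE} for the factorisation and Proposition~\ref{invariantplanes} to conclude that the boundary planes of $\mathscr{D}$ are invariant, hence $\mathscr{D}$ is forward-invariant. The paper's proof simply asserts this last implication, whereas you spell out the reason (backward uniqueness of the smooth ODE prevents interior trajectories from reaching an invariant plane) and add a density argument for initial data on $\partial\mathscr{D}$; these additions are correct and make the argument more explicit, but the underlying idea is identical.
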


\begin{proof} The fact that $J(t)=PD(t)Q$ is a consequence of the first point of Proposition \ref{reductionODE}. The fact that it is a SSVD is a consequence of Proposition \ref{invariantplanes} which ensures that the conditions \eqref{diagssvd} remain true for all $t>0$~: $\mathscr{D}$ is stable in the sense that if $D_0\in\mathscr{D}$, then $D(t)\in\mathscr{D}$ for all time $t>0$. This follows from the fact that the image by the isomorphism $\diag$ of the invariant manifolds of \eqref{ODEDR3} described in Proposition \ref{invariantplanes} are invariant manifolds of the dynamical system \eqref{ODED}. These manifolds in $\mathscr{D}_3(\R)$ form the boundary of the subset $\mathscr{D}$.\end{proof}

In conclusion, the study of the asymptotic behaviour of $f(t)$ as $t\to+\infty$  can be reduced to the study of the asymptotic behaviour of the solutions of the ODE \eqref{ODED} posed on the domain $\mathscr{D}$ (see Figure \ref{dessinequilibres}).\\

The following Proposition describes the equilibria of the dynamical system \eqref{ODEG} and is a consequence of the results of Section \ref{paragraphequilibria}.   
\begin{proposition}[Equilibria of the dynamical system \eqref{ODED}]\label{equilibriaODED} The equilibria of the dynamical system \eqref{ODED} are, depending on the density $\rho$~:
\begin{enumerate}[$\bullet$]
\item the matrix $D=0$ for any density $\rho$, 
\item the diagonal matrices of type (b) with parameters $\alpha_+$ and $\alpha_-$ when $\rho^*<\rho<\rho_c$,
\item the diagonal matrices of type (b) with parameters $\alpha_1$ and $\alpha_3$ and the diagonal matrices of type (c) with parameters $\pm\alpha_2$ when $\rho>\rho_c$,
\end{enumerate}
where the types (b) and (c) are defined in Proposition \ref{necessarycondition} and the elements $\alpha_+$, $\alpha_-$, $\alpha_1$, $\alpha_2$ and $\alpha_3$ are defined in Corollary \ref{phasediagramcorollary}.
\end{proposition}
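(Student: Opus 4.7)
The plan is to observe that this proposition is essentially a direct combination of results already established in Section \ref{paragraphequilibria}, with only some careful bookkeeping required. The starting observation is that $D\in\mathscr{D}_3(\R)$ is an equilibrium of the ODE \eqref{ODED} if and only if $D=\rho\langle A\rangle_{M_D}$, which is precisely the matrix compatibility equation \eqref{compatibility} restricted to the subspace of diagonal matrices. Thus the task reduces to listing all diagonal solutions of \eqref{compatibility} for each value of $\rho$.

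First I would invoke Proposition \ref{necessarycondition} (together with the converse statement of Remark \ref{sufficientcondition}), which asserts that a diagonal matrix $D$ solves \eqref{compatibility} if and only if it is of one of the three types: (a) the zero matrix, (b) a matrix $\alpha\,\diag(\pm1,\pm1,\pm1)$ with an even number of minus signs whose parameter $\alpha\in\R\setminus\{0\}$ satisfies the scalar compatibility equation \eqref{compatibilityc1}, or (c) a matrix $\alpha\,\diag(\pm1,0,0)$ (up to permutation of diagonal entries) whose parameter $\alpha\in\R\setminus\{0\}$ satisfies \eqref{compatibilityc2}. This immediately accounts for the zero equilibrium, which exists unconditionally.

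Next I would apply Corollary \ref{phasediagramcorollary}, which describes exactly when the scalar equations \eqref{compatibilityc1} and \eqref{compatibilityc2} admit non-zero solutions as functions of $\rho$. Splitting on the three density regimes: for $0<\rho<\rho^\ast$ only $\alpha=0$ satisfies either equation, so only $D=0$ is an equilibrium; for $\rho^\ast<\rho<\rho_c$, equation \eqref{compatibilityc1} has the two non-zero solutions $\alpha_-<\alpha_+$ (yielding type (b) equilibria with these parameters) while \eqref{compatibilityc2} still has only $\alpha=0$; and for $\rho>\rho_c$, equation \eqref{compatibilityc1} has the two non-zero solutions $\alpha_3<0<\alpha_1$ (type (b)) and \eqref{compatibilityc2} has the two non-zero solutions $\pm\alpha_2$ (type (c)), together with $D=0$.

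The main subtle point, rather than an obstacle, is to make sure that in each regime I enumerate all diagonal matrices of types (b) and (c) — not just those in the reduced domain $\mathscr{D}$ — since the ODE \eqref{ODED} is posed on all of $\mathscr{D}_3(\R)$ and different permutations/sign choices give genuinely distinct equilibria of the dynamical system, even though they share the same SSVD diagonal part (as pointed out in Remark \ref{diagonalsolutionsandtheothers}). Once this is done, no further computation is needed and the proposition follows by concatenating the three cases.
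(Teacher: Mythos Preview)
Your proposal is correct and follows exactly the same approach as the paper: identify equilibria of \eqref{ODED} as diagonal solutions of the compatibility equation \eqref{compatibility}, then invoke the classification (Proposition \ref{necessarycondition}, Remark \ref{sufficientcondition}) together with the density-dependent solvability of the scalar equations (Corollary \ref{phasediagramcorollary}). Your discussion of the distinction between all diagonal equilibria and those in $\mathscr{D}$ is a valid clarification that the paper relegates to a separate remark immediately after the proof.
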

\begin{proof} The equilibria of \eqref{ODED} are the diagonal matrices $D$ such that 
\[\rho\langle A\rangle_{M_D}-D=0,\]
which is exactly Equation \eqref{compatibility} for the diagonal matrices. This equation has been solved in Theorem \ref{solutionscompatibility}, Remark \ref{sufficientcondition} and Corollary \ref{phasediagramcorollary}. 
\end{proof}
\begin{rem} As in the previous section, we have found all the diagonal equilibria of \eqref{ODED}. However, thanks to Corollary \ref{ODEssvd}, only the ones which belong to $\mathscr{D}$ are needed.\end{rem}
The following proposition is a straightforward consequence of the previous results.
\begin{proposition}[Equilibria of the BGK operator, equilibria of the ODE \eqref{ODED}]\label{equilibriaBGKODE} Let $J\in~\MM_3(\R)$ with a SSVD given by $J=PDQ$. The following assertions are equivalent.
\begin{enumerate}[(1)]
\item The distribution $f^\mathrm{eq}=\rho M_J$ is an equilibrium of the BGK operator \eqref{bgkoperator}.
\item The matrix $D$ is an equilibrium of the dynamical system \eqref{ODED} on the domain $\mathscr{D}$.
\item The matrix $D$ is of one of the forms:
\begin{enumerate}[$\bullet$]
\item $D=0$, when $\rho<\rho^*$,
\item $D=0$ or $D=\alpha_- I_3$ or $D=\alpha_+ I_3$, when $\rho^*\leq\rho\leq\rho_c$,
\item $D=0$ or $D=\alpha_1 I_3$ or $D=\alpha_3 \diag(-1,-1,1)$ or $D=\alpha_2\diag(1,0,0)$, when $\rho>\rho_c$,
\end{enumerate}
where $\rho^*$, $\rho_c$, $\alpha_-$, $\alpha_+$, $\alpha_1$, $\alpha_2$ and $\alpha_3$ are defined in Proposition \ref{alphasurc} and Corollary \ref{phasediagramcorollary}.
\end{enumerate}
\end{proposition}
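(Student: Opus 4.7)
The plan is to obtain the three-way equivalence by chaining together results already established in Sections \ref{paragraphequilibria} and the first part of Section \ref{convergence}, so the work here is essentially bookkeeping: matching the diagonal equilibria of the ODE \eqref{ODED} (when restricted to the admissible cone $\mathscr{D}$) with the orbits described in Corollary \ref{phasediagramcorollary}.

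First I would establish $(1)\Leftrightarrow(2)$. By Theorem \ref{solutionscompatibility}, $f^\mathrm{eq}=\rho M_J$ is an equilibrium of the BGK operator \eqref{bgkoperator} if and only if $J$ solves the matrix compatibility equation \eqref{compatibility}. Writing a SSVD $J=PDQ$, Corollary \ref{equivalenceSSVD} (orbital reduction) says that $J$ solves \eqref{compatibility} if and only if its diagonal part $D$ does. But \eqref{compatibility} is exactly the stationarity condition $\rho\langle A\rangle_{M_D}-D=0$ of the ODE \eqref{ODED}, which gives $(1)\Leftrightarrow(2)$. Note the invariance stated in Corollary \ref{ODEssvd} is what makes the reduction to $\mathscr{D}$ legitimate: diagonal matrices belonging to $\mathscr{D}$ form an invariant set of the flow.

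Next I would prove $(2)\Leftrightarrow(3)$ by invoking Proposition \ref{equilibriaODED}, which already lists all diagonal equilibria of \eqref{ODED}, and then filtering those which lie in~$\mathscr{D}$. For $\rho<\rho^*$ only $D=0$ survives. For $\rho^*\le\rho\le\rho_c$ the solutions $\alpha_\pm$ of \eqref{compatibilityc1} are positive by Proposition \ref{alphasurc}, so the type (b) matrices $\alpha_\pm \diag(\pm1,\pm1,\pm1)$ with even number of minuses all lie in the same orbit whose SSVD diagonal part is $\alpha_\pm I_3\in\mathscr{D}$. For $\rho>\rho_c$ the same argument with $\alpha_1>0$ yields $\alpha_1 I_3$; for $\alpha_3<0$ the SSVD diagonal part of $\alpha_3\diag(\pm1,\pm1,\pm1)$ is $|\alpha_3|\diag(1,1,-1)$, which can be written $\alpha_3\diag(-1,-1,1)$ as in the statement. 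Finally, the type (c) equilibria with parameter $\pm\alpha_2$ both yield the SSVD diagonal $\alpha_2\diag(1,0,0)\in\mathscr{D}$ (this is the reason the two signs produce the same orbit, as noted in the caption of Figure \ref{phasediagram}).

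The only slightly delicate bookkeeping is checking that each candidate of the form described in Proposition \ref{necessarycondition} actually admits the listed representative in $\mathscr{D}$ (requiring $d_1\ge d_2\ge|d_3|$), and conversely that no element of $\mathscr{D}$ is lost by this normalisation. This is immediate from the explicit computation of the SSVD carried out at the end of the proof of Proposition \ref{necessarycondition}, so no new analytical difficulty arises: all the substantial work has been done in Theorem \ref{solutionscompatibility}, Proposition \ref{alphasurc} and Corollary \ref{phasediagramcorollary}.
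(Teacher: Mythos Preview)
Your proposal is correct and matches the paper's intent: the paper does not give an explicit proof of Proposition \ref{equilibriaBGKODE}, stating only that it ``is a straightforward consequence of the previous results.'' Your argument carries out precisely this bookkeeping, chaining Theorem \ref{solutionscompatibility}, Corollary \ref{equivalenceSSVD}, Proposition \ref{equilibriaODED}, and the SSVD normalisations from Proposition \ref{necessarycondition} in the expected way.
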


\subsubsection{A gradient-flow structure}\label{subsubgradient}

\begin{lemma}[Gradient-flow structure]\label{gradientflowlemma}
We define the \textit{partition function} of a matrix $J~\in~\MM_3(\R)$~:
\begin{equation}\label{partitionfunction}\mathcal{Z}(J)~:=\int_{SO_3(\R)} e^{J\cdot A}\,dA,\end{equation}
and the potentials $V(J)$ and $\widehat{V}(\widehat{D})$ respectively on $\mathscr{M}_3(\R)$ and on $\R^3$~:
\begin{subequations}
\begin{align}
\label{potential}V(J)&:=\frac{1}{2}\|J\|^2-\rho\log\mathcal{Z}(J),\\
\label{potentialR3}\widehat{V}(\widehat{D})&:=\frac{1}{2}|\widehat{D}|^2-2\rho\log\mathcal{Z}(\diag(\widehat{D}))
\end{align}
\end{subequations}
where $\|\cdot\|$ and $|\cdot |$ are the Euclidean norms respectively on $\MM_3(\R)$ and on $\R^3$. Then we can rewrite equations \eqref{ODED} and \eqref{ODEDR3} into a gradient flow structure as follows: 
\begin{subequations}
\begin{align}
\label{ODEG}\frac{d}{dt}D(t)&=\rho\langle A\rangle_{M_{D(t)}}-D(t)=-\nabla V(D),\\
\label{ODEGR3}\frac{d}{dt}\widehat{D}(t)&=\rho\diag^{-1}\left(\langle A\rangle_{M_{\diag(\widehat{D}(t))}}\right)-\widehat{D}(t)=-\nabla \widehat{V}(\widehat{D}),
\end{align}
\end{subequations}
where $\nabla$ is the gradient operator in $\MM_3(\R)$ endowed with the Riemaniann structure \eqref{produitscalaire} or the gradient operator in $\R^3$ endowed with the usual Euclidean structure.
\end{lemma}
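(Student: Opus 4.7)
The plan is to verify directly that the stated potentials have gradients equal (up to sign) to the right-hand sides of the respective ODEs. Both parts boil down to differentiating the log-partition function under the integral sign; the only subtlety is bookkeeping with the factor $\tfrac12$ in the inner product $A\cdot B=\tfrac12\Tr(A^TB)$ from \eqref{produitscalaire}.

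For the matrix version \eqref{ODEG}, I would work on $\MM_3(\R)$ with the Riemannian structure \eqref{produitscalaire}, so the gradient $\nabla f(J)$ is characterised by $df(J)(H)=\nabla f(J)\cdot H$ for every $H\in\MM_3(\R)$. Differentiating $J\mapsto\tfrac12\|J\|^2$ immediately yields $\nabla(\tfrac12\|J\|^2)=J$. For the log-partition term, since $J\mapsto e^{J\cdot A}$ is smooth and $SO_3(\R)$ is compact, one can differentiate under the integral:
\[
d\big(\log \mathcal{Z}\big)(J)(H)=\frac{1}{\mathcal{Z}(J)}\int_{SO_3(\R)} (H\cdot A)\,e^{J\cdot A}\,dA=H\cdot\langle A\rangle_{M_J},
\]
which identifies $\nabla\log\mathcal{Z}(J)=\langle A\rangle_{M_J}$. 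Combining these two computations gives $\nabla V(J)=J-\rho\langle A\rangle_{M_J}$, and therefore $-\nabla V(D)=\rho\langle A\rangle_{M_D}-D$, which is exactly the right-hand side of \eqref{ODED}.

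For the $\R^3$ version \eqref{ODEGR3}, I would instead compute partial derivatives with respect to the standard Euclidean structure. Clearly $\nabla(\tfrac12|\widehat D|^2)=\widehat D$. For the second term, I would use that, for any diagonal $D=\diag(\widehat D)$ and $A\in SO_3(\R)$,
\[
\diag(\widehat D)\cdot A=\tfrac{1}{2}\Tr(\diag(\widehat D)^T A)=\tfrac{1}{2}\sum_{i=1}^3 d_i\,a_{ii},
\]
so that $\partial_{d_i}(\diag(\widehat D)\cdot A)=\tfrac12 a_{ii}$. Differentiating under the integral again gives
\[
\partial_{d_i}\log\mathcal{Z}(\diag(\widehat D))=\tfrac{1}{2}\,\big\langle a_{ii}\big\rangle_{M_{\diag(\widehat D)}},
\]
i.e.\ $\nabla\log\mathcal{Z}(\diag(\widehat D))=\tfrac12\,\diag^{-1}\!\big(\langle A\rangle_{M_{\diag(\widehat D)}}\big)$, where the application of $\diag^{-1}$ is legitimate because $\langle A\rangle_{M_{\diag(\widehat D)}}$ is diagonal by Lemma \ref{diagonal}. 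The factor $2\rho$ in the definition \eqref{potentialR3} was chosen precisely to cancel this $\tfrac12$, yielding $\nabla\widehat V(\widehat D)=\widehat D-\rho\,\diag^{-1}\!\big(\langle A\rangle_{M_{\diag(\widehat D)}}\big)$, which matches \eqref{ODEDR3} with a minus sign.

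There is no real obstacle here beyond careful tracking of the inner-product normalisation: the factor $\tfrac12$ in $A\cdot B$ is what forces the coefficient $2\rho$ in the Euclidean potential $\widehat V$ while allowing the coefficient $\rho$ in the Riemannian potential $V$. Everything else follows from smoothness of the integrand and compactness of $SO_3(\R)$, which legitimise differentiation under the integral sign.
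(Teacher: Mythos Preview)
Your proof is correct and follows essentially the same approach as the paper: both compute $\nabla\log\mathcal{Z}(J)=\langle A\rangle_{M_J}$ by differentiating under the integral, and both explain the factor $2\rho$ in $\widehat V$ via the relation between the two inner products (the paper phrases this as $w_1\cdot w_2=2\diag(w_1)\cdot\diag(w_2)$, while you unpack it coordinate-wise as $\partial_{d_i}(\diag(\widehat D)\cdot A)=\tfrac12 a_{ii}$). Your version is slightly more explicit in invoking Lemma~\ref{diagonal} to justify applying $\diag^{-1}$, which the paper leaves implicit.
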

\begin{proof}
The partition function satisfies that for all $J\in\MM_3(\R)$,
\[\nabla (\log\mathcal{Z})(J)=\langle A\rangle_{M_J},\]
since $\nabla(e^{J\cdot A})=Ae^{J\cdot A}$. The result follows in $\MM_3(\R)$. The result in $\R^3$ follows from the fact that for any $w_1,w_2\in\R^3$, it holds that:
\[w_1\cdot w_2=2\diag(w_1)\cdot\diag(w_2)\]
where $\cdot$ denotes the dot product on $\R^3$ and on $\MM_3(\R)$ as defined in \eqref{produitscalaire}.
\end{proof}

\begin{rem}
This gradient-flow structure on $J_f$ is specific to the BGK equation and does not hold for the Fokker-Planck operator (as shown in \cite{degondfrouvelleliu15}, the differential equation satisfied by $J_f$ in the Vicsek case involves the spherical harmonics of degree 2 and higher of $f$; here the equation for $J_f$ is closed).\end{rem}

In particular the gradient-flow structure \eqref{ODEG} implies that the system \eqref{ODEG} will converge towards an equilibrium. When all the equilibria of the dynamical system \eqref{ODED} are hyperbolic the convergence towards the stable equilibria is exponentially fast (see \cite[Section 9.3]{hirsch2012differential}). The goal of the next section is to find which equilibria among the ones found in Section \ref{paragraphequilibria} are stable and to completely describe the asymptotic behaviour of the system depending on the initial condition and the local density $\rho$. We will see that phase transitions appear between ordered and disordered dynamics when the density $\rho$ increases. 

\subsection{Stability of the equilibria and conclusion}\label{paragraphstability}

Since the flow of \eqref{ODEDR3} is the image by the isomorphism $\diag^{-1}$ of the flow of \eqref{ODED}, an equilibria $D^\mathrm{eq}$ of \eqref{ODED} is stable (resp. unstable) if and only if $\diag^{-1}(D^\mathrm{eq})$ is a stable (resp. unstable) equilibria of \eqref{ODEDR3}. The stability properties of the equilibria of \eqref{ODEDR3} are much simpler to study than the stability properties of the equilibria of \eqref{ODED} since they are given by the signature of the Hessian matrix  $\Hess \widehat{V}(\widehat{D}^\mathrm{eq})\in\mathscr{S}_3(\R)$ of the potential $\widehat{V}$ given in equation \eqref{potentialR3} (the linearisation of ODE \eqref{ODEDR3} around an equilibrium $\widehat{D}^\mathrm{eq}$ is indeed $\frac{d}{dt}\widehat{H}=-\Hess \widehat{V}(\widehat{D}^\mathrm{eq})\widehat{H}$). In particular, an equilibrium $\widehat{D}^\mathrm{eq}$ is stable if and only if the signature of $\Hess \widehat{V}(\widehat{D}^\mathrm{eq})$ is $(+++)$.\\

Note that in the matrix framework \eqref{ODEG}, the Hessian of the potential \eqref{potential} in the Euclidean space $\MM_3(\R)$ would be a rank 4 tensor. Here we are reduced to the computation of the signature of $3\times3$ symmetric matrices. For a diagonal matrix $D\in\mathscr{D}_3(\R)$ and $\widehat{D}=\diag^{-1}(D)$ we will write with a slight abuse of notations:
\[\Hess V(D)\equiv\Hess\widehat{V}(\widehat{D}).\] 
When $D\in\mathscr{D}_3(\R)$ is an equilibrium of \eqref{ODED} we call signature of the Hessian matrix $\Hess V(D)$ the signature of $\Hess\widehat{V}(\widehat{D})$ where $\widehat{D}=\diag^{-1}(D)$. A simple computation shows that the Hessian matrix $\Hess V(D)$ is given by : 
\[\Hess V(D)=I_3-\frac{1}{2}\rho\Gamma,\]
where $\Gamma=(\Gamma_{ij})_{i,j}$ with : 
\[\Gamma_{ij}=\langle a_{ii} a_{jj}\rangle_{M_D}-\langle a_{ii}\rangle_{M_D}\langle a_{jj}\rangle_{M_D}.\]
The following theorem is an extension of Corollary \ref{phasediagramcorollary} and gives a full description of the equilibria of the BGK operator with their stability. 
\begin{theorem}[Stability of the equilibria of the ODE \eqref{ODED}]\label{equilibriaBGKoperatorstability}\
\begin{enumerate}[$\bullet$]
\item For $0<\rho<\rho^*$, the only equilibrium is $D=0$. This equilibrium is stable.
\item For $\rho^*<\rho<\rho_c$, the equilibrium $D=0$ and the equilibria of type (b) with parameter $\alpha_+$  are stable. The equilibria of type (b) with parameter $\alpha_-$ are unstable and the signature of the Hessian matrix is $(-++)$.
\item For $\rho>\rho_c$, the stable equilibria are the equilibria of type (b) with parameter $\alpha_1$. The other equilibria ($D=0$, type (b) with parameter $\alpha_3$ and type (c) with parameter $\pm\alpha_2$) are unstable and the signatures of the Hessian matrix are respectively $(---)$, $(+--)$ and $(++-)$.
\end{enumerate}
\end{theorem}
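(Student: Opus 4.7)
Stability at a diagonal equilibrium $D^{\mathrm{eq}}\in\mathscr{D}$ will be read off from the signature of
\[\Hess \widehat V(\widehat D^{\mathrm{eq}}) = I_3 - \frac{\rho}{2}\Gamma(D^{\mathrm{eq}}),\qquad \Gamma_{ij}=\langle a_{ii}a_{jj}\rangle_{M_{D^{\mathrm{eq}}}}-\langle a_{ii}\rangle_{M_{D^{\mathrm{eq}}}}\langle a_{jj}\rangle_{M_{D^{\mathrm{eq}}}}.\]
My strategy is, in each case, to exploit the symmetry group preserving $M_{D^{\mathrm{eq}}}$ to block-diagonalise $\Gamma$, then determine the sign of each scalar eigenvalue either by identifying it with a quantity coming from the scalar compatibility equations $\alpha=\rho c_1(\alpha)$ or $\alpha=\rho c_2(\alpha)$ (and invoking the monotonicity of Proposition \ref{alphasurc}), or by direct integration using the Rodrigues parametrisation \eqref{rodrigue} or the decomposition \eqref{SOSphere}.

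For the uniform equilibrium $D=0$ I would combine full $SO_3\times SO_3$-invariance with Lemma \ref{JAA} applied to $J = 2\, e_i e_i^T$ to get $\langle a_{ii}\rangle=0$ and $\langle a_{ii}a_{jj}\rangle=\tfrac{1}{3}\delta_{ij}$, hence $\Hess V(0)=(1-\rho/6)\,I_3$, which flips sign exactly at $\rho_c=6$. For a type-\emph{(b)} equilibrium $D=\alpha I_3$, conjugation invariance of $M_{\alpha I_3}$ forces $\Gamma = u\,I_3+v(\mathbf{1}\mathbf{1}^T-I_3)$, with eigenvalues $u+2v$ along $\mathbf{1}$ and $u-v$ (double) on $\mathbf{1}^\perp$. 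Writing $\Tr A = 1+2\cos\theta$ and using the explicit form of $c_1$ from Lemma \ref{c1c2}, $u+2v = \tfrac{1}{3}\Var_{\alpha}(\Tr A) = 2\, c_1'(\alpha)$, so the longitudinal eigenvalue equals $1-\rho c_1'(\alpha)$. The chain-rule identity $\tfrac{d}{d\alpha}[\alpha/c_1(\alpha)] = \tfrac{\rho}{\alpha}(1-\rho c_1'(\alpha))$ evaluated at equilibrium, together with the monotonicity from Proposition \ref{alphasurc}(i), then pins down this sign along each branch $\alpha_\pm,\alpha_1,\alpha_3$. For the transverse eigenvalue I would use Rodrigues to write $a_{ii}=\cos\theta+(1-\cos\theta)n_i^2$ with $\mathbf{n}$ uniform on $\Sph^2$ and integrate out $\mathbf{n}$ (via $\int_{\Sph^2}n_i^4 = 1/5$, $\int_{\Sph^2}n_i^2n_j^2=1/15$), obtaining $u-v = \tfrac{2}{15}\{(1-\cos\theta)^2\}_\alpha$; the sign of $1-\tfrac{\rho}{15}\{(1-\cos\theta)^2\}_\alpha$ along each branch (using the compatibility constraint to eliminate $\rho$) then yields the transverse signs stated in the theorem.

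For a type-\emph{(c)} equilibrium $D=\alpha\diag(1,0,0)$, the changes of variable $A\mapsto P^{23}A(P^{23})^T$, $A\mapsto D^{23}A$, $A\mapsto AD^{23}$ from Definition \ref{changeofvariable} force $\langle a_{22}\rangle=\langle a_{33}\rangle=0$, $\Gamma_{12}=\Gamma_{13}=0$ and $\Gamma_{22}=\Gamma_{33}$, so that $\Gamma$ is block-diagonal with a scalar $\Gamma_{11}$ and a symmetric $2\times 2$ block with diagonal $\Gamma_{22}$ and off-diagonal $\Gamma_{23}$. Restricting the dynamics to the invariant line $\{d_2=d_3=0\}$ identifies $\Gamma_{11}$ with $2c_2'(\alpha_2)$, and Proposition \ref{alphasurc}(ii) gives $1-\rho c_2'(\alpha_2)>0$ since $\alpha/c_2(\alpha)$ is strictly increasing on $(0,+\infty)$. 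I would then compute $\langle a_{22}^2\rangle$ and $\langle a_{22}a_{33}\rangle$ explicitly via the $SO_3\cong SO_2\times\Sph^2$ parametrisation \eqref{SOSphere} and verify that exactly one of the block eigenvalues $\Gamma_{22}\pm\Gamma_{23}$ exceeds $2/\rho$, producing the claimed signature $(++-)$.

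\textbf{Main obstacle.} The delicate step is the analysis of the transverse eigenvalues in the type-\emph{(b)} and type-\emph{(c)} cases: these are not dictated by the scalar compatibility equations and have to be tracked along the four branches through their explicit integral representations, using $\alpha = \rho c_i(\alpha)$ to eliminate $\rho$ and expose the sign change between branches---in particular, establishing that the transverse eigenvalue in the type-\emph{(b)} case changes sign only at $\alpha_3$ and that exactly one of the $2\times 2$-block eigenvalues in the type-\emph{(c)} case becomes negative.
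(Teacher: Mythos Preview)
Your plan is correct and follows essentially the same route as the paper: symmetry-based block-diagonalisation of $\Gamma$, identification of the ``longitudinal'' eigenvalue with $1-\rho c_i'(\alpha)$ via differentiation of the compatibility relation, and explicit integration (Rodrigues for type \emph{(b)}, the $SO_2\times\Sph^2$ decomposition for type \emph{(c)}) for the transverse eigenvalues. Your use of Lemma~\ref{JAA} with $J=2e_ie_i^T$ to read off $\langle a_{ii}a_{jj}\rangle=\tfrac13\delta_{ij}$ at $D=0$ is a slight streamlining over the paper's direct changes of variable; otherwise the computations (your $u-v=\tfrac{2}{15}\{(1-\cos\theta)^2\}_\alpha$, and the $2\times2$ block eigenvalues $\Gamma_{22}\pm\Gamma_{23}$ in the type-\emph{(c)} case) coincide exactly with what the paper obtains.

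The obstacle you flag is the right one, and the paper resolves it by a dedicated integral inequality (its Lemma~\ref{complique}): after an integration by parts that eliminates the factor $\alpha$ from $\frac{\rho}{15}\{(1-\cos\theta)^2\}_\alpha$ (using $\tfrac{d}{d\theta}[\sin^2(\theta/2)\sin\theta]=\sin^2(\theta/2)(1+2\cos\theta)$), the transverse eigenvalue becomes $1-\tfrac15\{(1-\cos\theta)^2\}_\alpha/\{\sin^2\theta\}_\alpha$, and its sign is obtained by rewriting the numerator as $-\tfrac15(3\cos\theta+2)(1-\cos\theta)^2$ and splitting the integral at $\theta_0=\arccos(-2/3)$; the analogous argument handles the two type-\emph{(c)} block eigenvalues. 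One phrasing nit: the transverse eigenvalue in the type-\emph{(b)} case changes sign as a function of $\alpha$ at $\alpha=0$, not ``at $\alpha_3$''---it is negative precisely on the branch $\alpha_3<0$ because $\alpha_3$ is the unique negative root.
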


The proof of Theorem \ref{equilibriaBGKoperatorstability} will be based on the following lemma which states an important orbital invariance principle for the signature of the Hessian matrix.
\begin{lemma}[Orbital invariance of the signature]\label{orbitalinvariancesignature} Let $\widehat{D}\in\R^3$ be an equilibrium of \eqref{ODEDR3}. The signature of $\Hess \widehat{V}(\widehat{D})$ depends only on the type (a), (b) or (c) of $\diag(\widehat{D})$ as defined in Proposition \ref{necessarycondition}.
\end{lemma}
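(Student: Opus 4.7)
The plan is to show that any two equilibria of \eqref{ODEDR3} belonging to the same type (a), (b), or (c) of Proposition \ref{necessarycondition} are linked by the action of a signed permutation in $SO_3(\R)$, acting either by left multiplication or by conjugation, and that such an action induces an orthogonal similarity on $\Hess\widehat{V}$, which preserves its signature. Concretely, the matrices $D^{ij}$ and $P^{ij}$ from Definition \ref{changeofvariable} provide exactly the operations needed to connect representatives within each type: type (a) is a single point; the four type (b) matrices are obtained from $\alpha I_3$ by left multiplication by $I_3, D^{12}, D^{13}, D^{23}$; and any type (c) matrix can be reached from $\alpha\,\diag(1,0,0)$ by conjugation with a suitable $P^{ij}$ (to permute the location of the nonzero entry), possibly followed by left multiplication by some $D^{ik}$ (to flip its sign).

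The central computation is to track how the covariance matrix $\Gamma(D)$ with entries $\Gamma_{ij}(D) := \langle a_{ii}a_{jj}\rangle_{M_D} - \langle a_{ii}\rangle_{M_D}\langle a_{jj}\rangle_{M_D}$ transforms under these actions. Using the left-right invariance of the Haar measure, two changes of variable suffice. If $P \in SO_3(\R)$ is a diagonal $\pm 1$ matrix, the substitution $A = PB$ yields $\Gamma(PD) = P\,\Gamma(D)\,P^T$. If $P \in SO_3(\R)$ is a signed permutation associated to a permutation $\sigma$ of $\{1,2,3\}$, the substitution $A = PBP^T$ yields $\Gamma_{ij}(PDP^T) = \Gamma_{\sigma^{-1}(i),\sigma^{-1}(j)}(D)$, so that $\Gamma(PDP^T) = \Pi\,\Gamma(D)\,\Pi^T$ for the unsigned permutation matrix $\Pi$ associated to $\sigma$. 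In both cases one obtains $\Gamma(D') = R\,\Gamma(D)\,R^T$ for some orthogonal matrix $R$, and since $R I_3 R^T = I_3$, this gives $\Hess\widehat{V}(\widehat{D}') = R\,\Hess\widehat{V}(\widehat{D})\,R^T$. Orthogonal similarity preserves eigenvalues, hence the signature.

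The main technical obstacle is the bookkeeping in the conjugation case: one must verify that the signs carried by $P^{ij}$ combine in pairs on the diagonal entries $(PAP^T)_{ii}$, the cross products becoming $\epsilon_i^2 = 1$, so that the conjugation action reduces to a clean permutation on the indices of $\Gamma$. Beyond this, the chain of reasoning is straightforward and the invariance of the signature follows immediately.
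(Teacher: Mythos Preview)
Your proof is correct and follows essentially the same route as the paper's. Both arguments identify that two diagonal equilibria of the same type are related by the action of signed permutations from $SO_3(\R)$, and that this action induces an orthogonal congruence $\Hess\widehat{V}(\widehat{D}')=R\,\Hess\widehat{V}(\widehat{D})\,R^T$ with $R\in O_3(\R)$, so the signature is preserved. The only stylistic difference is that the paper phrases the key step at the level of the potential---observing that for the relevant $P,Q\in SO_3(\R)$ the diagonal of $PMQ$ equals $R$ applied to the diagonal of $M$ for \emph{all} $M$, which immediately yields $\widehat{V}\circ R=\widehat{V}$ and hence the Hessian relation by differentiation---whereas you compute the transformation directly on the covariance matrix $\Gamma$; the content is the same.
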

\begin{proof} Let $\widehat{D}_1$ and $\widehat{D}_2$ be two equilibria of \eqref{ODEDR3} such that $\diag(\widehat{D}_1)$ and $\diag(\widehat{D}_2)$ are of the same type. Then there exist $P,Q\in SO_3(\R)$ and $R\in\OO_3(\R)$ such that 
\[\widehat{D}_2=\diag^{-1}\left(P\diag(\widehat{D}_1)Q\right)=R\widehat{D}_1.\]
Moreover for all $M\in\MM_3(\R)$, the matrices $P,Q,R$ satisfy (permutation of the diagonal coefficients):
\[\left(
\begin{array}{c}
(PMQ)_{11}\\
(PMQ)_{22}\\
(PMQ)_{33}
\end{array} 
\right)
=R
\left(
\begin{array}{c}
M_{11}\\
M_{22}\\
M_{33}
\end{array}
\right)\]
where $M_{ii}$ is the $(i,i)$-th coefficient of the matrix $M$. Therefore, one can check that:
\[\nabla\widehat{V}(\widehat{D}_2)=R\nabla\widehat{V}(\widehat{D}_1)\]
and
\[\Hess\widehat{V}(\widehat{D}_2)=R\Hess\widehat{V}(\widehat{D}_1)R^T.\]
The conclusion follows from Sylvester's law of inertia (\cite[Chapter 8 Theorem 1]{laxlinearalgebra}).\end{proof}

\begin{proof}[\sc Proof (of Theorem \ref{equilibriaBGKoperatorstability})] Thanks to lemma \ref{orbitalinvariancesignature}, we therefore do not have to compute the signatures of the Hessian matrix taken at all equilibria, it is enough to choose one matrix in each orbit: for the equilibria of type (b) (see Proposition \ref{necessarycondition}) we will compute the signature of $\Hess(\alpha I_3)$ where $\alpha=\rho c_1(\alpha)$ and for the equilibria of type (c) we will compute the signature of $\Hess(\alpha \diag(1,0,0))$ where $\alpha=\rho c_2(\alpha)$. We first start with the case of the uniform equilibrium.\\

\noindent\textbf{\textit{Case 1. Uniform equilibrium $D=0$}}\\

For the uniform equilibrium $D=0$, $\langle a_{ij}\rangle=0$ for all $(i,j)$. Moreover, by the change of variable $A'= D^{ik}A$ where $k\ne i,j$, we have when $i\ne j$ : 
\[\langle a_{ii}a_{jj}\rangle=-\langle a_{ii}a_{jj}\rangle=0.\]
It proves that $\Gamma$ is diagonal. Then with the changes of variables $A'=P^{ij}A$ or $A'=AP^{ij}$ it can be seen that all the $3^2$ quantities $\langle a_{ij}^2\rangle$ are equal. Since their sum is equal to $n=3$ we get that 
\[\Hess V(0)=\left(1-\frac{1}{2}\rho\langle a_{11}^2\rangle\right)I_3= \left(1-\frac{\rho}{\rho_c}\right)I_3,\]
where $\rho_c=6$. In conclusion, the signature of $\Hess V(0)$ is $(+++)$ if $\rho<\rho_c$ and $(---)$ if $\rho>\rho_c$. \\

\noindent\textit{\textbf{Case 2. Equilibria of type (b) : $D=\alpha I_3$}}\\

Let $D=\alpha I_3$ with $\alpha=\rho c_1(\alpha)$. We have $c_1(\alpha)=\langle a_{11}\rangle_{M_D}=\langle a_{22}\rangle_{M_D}=\langle a_{33}\rangle_{M_D}$ and a change of variable of the type $A'=P^{ij}A(P^{ij})^T$ shows that all the $\langle a_{kk} a_{\ell\ell}\rangle_{M_D}$ are equal. The Hessian matrix is therefore equal to : 
\[\Hess V(\alpha I_3)=I_3-\frac{1}{2}\rho\left(\begin{array}{ccc}
\nu & \gamma & \gamma \\ 
\gamma & \nu & \gamma \\ 
\gamma & \gamma &\nu
\end{array}\right),\]
with $\nu=\langle a_{11}^2\rangle_{M_D}-\langle a_{11}\rangle_{M_D}^2$ and $\gamma=\langle a_{11}a_{22}\rangle_{M_D}-\langle a_{11}\rangle_{M_D}\langle a_{22}\rangle_{M_D}$. The eigenvalues of $\Hess V(D)$ are : 
\begin{enumerate}[$\bullet$]
\item $1-\frac{1}{2}\rho\nu-\rho\gamma$ of order 1 with eigenvector $(1,1,1)^T$. But taking the derivative with respect to $\alpha$ of \eqref{defc1} with $\Lambda=I_3$ we obtain the relation : 
\[c_1'(\alpha)=\frac{1}{2}\langle a_{11}^2\rangle_{M_D}+\langle a_{11}a_{22}\rangle_{M_D}-\frac{3}{2}\langle a_{11}\rangle_{M_D}^2=\frac{1}{2}\nu+\gamma,\]
and using $\rho=\alpha/c_1(\alpha)$ we can rewrite : 
\[1-\frac{1}{2}\rho\nu-\rho\gamma=1-\frac{\alpha c_1'(\alpha)}{c_1(\alpha)}=c_1(\alpha)\left(\frac{id}{c_1}\right)'(\alpha).\]
Its sign is then given by Proposition \ref{alphasurc} and the fact that $c_1(\alpha)$ has the same sign as $\alpha$~: $c_1(\alpha)\left(\frac{id}{c_1}\right)'(\alpha)>0$ when $\alpha<0$, $c_1(\alpha)\left(\frac{id}{c_1}\right)'(\alpha)<0$ when $0\leq\alpha<\alpha^*$ and $c_1(\alpha)\left(\frac{id}{c_1}\right)'(\alpha)>0$ when $\alpha>\alpha^*$.
\item $1-\frac{1}{2}\rho\nu+\frac{1}{2}\rho\gamma$ of order 2 with eigenvectors $(1,-1,0)^T$ and $(0,1,-1)^T$. It can be rewritten as:
\[1-\frac{1}{2}\rho\nu+\frac{1}{2}\rho\gamma=1-\frac{\alpha}{4c_1(\alpha)}\big\langle(a_{11}-a_{22})^2\big\rangle_{M_D}.\]
To determine this sign, we use the explicit volume form of $SO_3(\R)$ given by Rodrigues' formula \eqref{rodrigue} to see that : 
\[\frac{\alpha}{4c_1(\alpha)}\Big\langle(a_{11}-a_{22})^2\Big\rangle_{M_D} = \frac{\alpha}{5}\cdot\frac{\int_0^\pi \sin^2(\theta/2)(1-\cos\theta)^2e^{\alpha\cos\theta}\,d\theta}{\int_0^\pi\sin^2(\theta/2)(1+2\cos\theta)e^{\alpha\cos\theta}\,d\theta}.
\]

\begin{lemma}\label{complique} The function 
\[f:x\longmapsto 1- \frac{x}{5}\cdot\frac{\int_0^\pi \sin^2(\theta/2)(1-\cos\theta)^2e^{x\cos\theta}\,d\theta}{\int_0^\pi\sin^2(\theta/2)(1+2\cos\theta)e^{x\cos\theta}\,d\theta},\]
satisfies $f(0)=0$, $f(x)\geq0$ if $x\geq0$ and $f(x)\leq0$ if $x\leq0$\end{lemma}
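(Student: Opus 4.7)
The plan is to reformulate $f(x)$ as $1 - N(x)/(5\widetilde D(x))$ with a strictly positive denominator $\widetilde D(x)$, and then control the sign of the numerator $5\widetilde D(x) - N(x)$ by exhibiting it as an integral of a function with exactly one sign change on $[0,\pi]$ and with zero mean.

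First I would integrate the denominator by parts to remove the apparent singularity at $x=0$. Using the identity $\frac{d}{d\theta}[\sin^2(\theta/2)\sin\theta] = \sin^2(\theta/2)(1+2\cos\theta)$ (a direct computation via $\frac{d}{d\theta}\sin^2(\theta/2)=\tfrac12\sin\theta$) together with $\sin\theta|_0^\pi=0$, I obtain
\[\int_0^\pi \sin^2(\theta/2)(1+2\cos\theta)\,e^{x\cos\theta}\,d\theta \;=\; x\int_0^\pi \sin^2(\theta/2)\sin^2\theta\,e^{x\cos\theta}\,d\theta \;=:\; x\,\widetilde D(x),\]
with $\widetilde D(x)>0$ for every $x\in\R$. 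Hence $f(x)=1-N(x)/(5\widetilde D(x))$ extends continuously to $x=0$ and $\sgn f(x)=\sgn(5\widetilde D(x)-N(x))$.

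Next, substituting $\sin^2(\theta/2)=(1-\cos\theta)/2$ into $N$ and $\widetilde D$ and collecting the polynomial factors in $\cos\theta$, a short manipulation gives
\[5\widetilde D(x)-N(x) \;=\; \int_0^\pi \mu(\theta)\,e^{x\cos\theta}\,d\theta,\qquad \mu(\theta):=(1-\cos\theta)^2(2+3\cos\theta).\]
Expanding $\mu$ in powers of $\cos\theta$ and using $\int_0^\pi \cos^{2k+1}\theta\,d\theta=0$ and $\int_0^\pi \cos^2\theta\,d\theta=\pi/2$ shows $\int_0^\pi\mu(\theta)\,d\theta=0$ (which independently recovers $f(0)=0$). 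Moreover $(1-\cos\theta)^2\geq 0$ and $2+3\cos\theta$ vanishes only at $\theta_0:=\arccos(-2/3)\in(0,\pi)$, so $\mu\geq 0$ on $[0,\theta_0]$ and $\mu\leq 0$ on $[\theta_0,\pi]$.

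The conclusion then follows from the classical \emph{subtract-a-constant} trick: since $\int_0^\pi \mu\,d\theta=0$,
\[5\widetilde D(x)-N(x)\;=\;\int_0^\pi \mu(\theta)\bigl[e^{x\cos\theta}-e^{x\cos\theta_0}\bigr]\,d\theta.\]
For $x>0$, the map $\theta\mapsto e^{x\cos\theta}$ is strictly decreasing in $\theta$, so $e^{x\cos\theta}-e^{x\cos\theta_0}$ has the same sign as $\mu(\theta)$ on both $[0,\theta_0)$ and $(\theta_0,\pi]$; the integrand is therefore pointwise nonnegative and $f(x)\geq 0$. For $x<0$ the monotonicity reverses and the two factors are of opposite sign throughout, giving $f(x)\leq 0$. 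The case $x=0$ is already handled. There is no substantive obstacle: the only effort is in verifying the integration-by-parts identity and the algebraic reduction to the kernel $\mu$, both of which are routine trigonometric polynomial manipulations.
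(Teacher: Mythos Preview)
Your proof is correct and follows essentially the same route as the paper: the same integration by parts to replace the denominator by $x\widetilde D(x)$, the same algebraic reduction to the kernel $(1-\cos\theta)^2(2+3\cos\theta)$ with a single sign change at $\theta_0=\arccos(-2/3)$ and zero mean, and the same comparison against $e^{x\cos\theta_0}$. The only cosmetic difference is that the paper splits the integral at $\theta_0$ and bounds $e^{x\cos\theta}$ by $e^{-2x/3}$ on each piece, whereas you subtract the constant $e^{x\cos\theta_0}$ under the integral; these are two phrasings of the identical estimate.
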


\begin{proof} The value of $f(0)$ is given by the expansion of $\exp$. Note that : 
\[\frac{d}{d\theta}\Big\{\sin^2(\theta/2)\sin\theta\Big\}=\sin^2(\theta/2)(1+2\cos\theta),\]
so that an integration by parts shows : 
\[\int_0^\pi\sin^2(\theta/2)(1+2\cos\theta)e^{x\cos\theta}\,d\theta=x\int_0^\pi\sin^2(\theta/2)\sin^2(\theta)e^{x\cos\theta}d\theta.\]
We get that : 
\[f(x)=1-\frac{1}{5}\frac{\int_0^\pi \sin^2(\theta/2)(1-\cos\theta)^2e^{x\cos\theta}\,d\theta}{\int_0^\pi\sin^2(\theta/2)\sin^2(\theta)e^{x\cos\theta}d\theta}.\]
We have : 
\[f(x)\geq0\Longleftrightarrow \int_0^\pi \sin^2(\theta/2) \left(\frac{1}{5}(1-\cos\theta)^2-\sin^2(\theta)\right)e^{x\cos\theta}\,d\theta\leq0.\]
Linearizing $\sin^2(\theta/2)$ and expanding everything gives : 
\[\sin^2(\theta/2) \left(\frac{1}{5}(1-\cos\theta)^2-\sin^2(\theta)\right)=\frac{-1}{5}(3\cos\theta+2)(1-\cos\theta)^2,\]
so that : 
\[f(x)\geq0\Longleftrightarrow \int_0^\pi (3\cos\theta+2)(1-\cos\theta)^2 e^{x\cos\theta}\,d\theta\geq0.\]
Now for $x\geq0$, let $\theta_0=\arccos(-2/3)$. We cut the integral at $\theta_0$ and we get 
\[\int_0^{\theta_0} (3\cos\theta+2)(1-\cos\theta)^2 e^{x\cos\theta}\,d\theta\geq e^{-\frac{2}{3}x}\int_0^{\theta_0} (3\cos\theta+2)(1-\cos\theta)^2\,d\theta,\]
since the integrand is nonnegative and $x\geq0$. Similarly when the integrand is nonpositive 
\[\int_{\theta_0}^\pi (3\cos\theta+2)(1-\cos\theta)^2 e^{x\cos\theta}\,d\theta \geq e^{-\frac{2}{3}x}\int_{\theta_0}^\pi (3\cos\theta+2)(1-\cos\theta)^2 \,d\theta.\]
And finally : 
\[\int_0^\pi (3\cos\theta+2)(1-\cos\theta)^2 e^{x\cos\theta}\,d\theta \geq e^{-\frac{2}{3}x}\int_0^\pi (3\cos\theta+2)(1-\cos\theta)^2\,d\theta=0.\]
We find similarly that $f(x)\leq0$ when $x\leq0$.
\end{proof}
And therefore, we can deduce the sign of the eigenvalue: $1-\frac{\alpha}{4c_1(\alpha)}\big\langle(a_{11}-a_{22})^2\big\rangle_{M_D}>0$ when $\alpha>0$ and $1-\frac{\alpha}{4c_1(\alpha)}\Big\langle(a_{11}-a_{22})^2\big\rangle_{M_D}<0$ when $\alpha<0$.
 \end{enumerate}\
 
 \noindent\textit{\textbf{Case 3. Equilibria of type (c) : $D=\alpha\diag(1,0,0)$}}\\

For $D=\alpha\diag(1,0,0)$ with $\alpha=\rho c_2(\alpha)$, using the parametrisation \eqref{parametrisationSOSphere}, it holds that :
\begin{align*}
&\int_{SO_3(\R)}a_{22}e^{\frac{\alpha}{2}a_{11}}\,dA=\int_{SO_3(\R)}a_{22}e^{\frac{\alpha}{2}a_{33}}\,dA\\
&\hspace{1cm}=\frac{1}{8\pi^2}\int_{\theta=0}^{2\pi}\int_{\phi_2=0}^\pi e^{\frac{\alpha}{2}\cos\phi_2}\sin\phi_2\int_{\phi_1=0}^{2\pi}(-\sin\theta\sin\phi_1+\cos\theta\cos\phi_1\cos\phi_2)d\phi_1d\phi_2d\theta \\
&\hspace{1cm}=0.
\end{align*}
where the first equality comes from the change of variable $A\mapsto P^{13}A(P^{13})^T$. Similarly,
\[\langle a_{22}\rangle_{M_D}=\langle a_{33}\rangle_{M_D}=\langle a_{11} a_{22}\rangle_{M_D}=\langle a_{11} a_{33}\rangle_{M_D}=0.\]
The Hessian matrix is therefore equal to : 
\[I_3-\frac{1}{2}\rho \left(\begin{array}{ccc}
\langle a_{11}^2\rangle_{M_D}-\langle a_{11}\rangle_{M_D}^2 &0&0 \\ 
0 & \langle a_{22}^2\rangle_{M_D} & \langle a_{22}a_{33}\rangle_{M_D} \\
0 & \langle a_{22}a_{33}\rangle_{M_D} & \langle a_{33}^2\rangle_{M_D}
\end{array}\right),\]
and since $\langle a_{22}^2\rangle_{M_D}=\langle a_{33}^2\rangle_{M_D}$ as it can be seen with the change of variable $A\mapsto P^{23}A(P^{23})^T$, its eigenvalues are : 
\[1-\frac{1}{2}\rho\Big(\langle a_{11}^2\rangle_{M_D}-\langle a_{11}\rangle_{M_D}^2\Big),\]
with eigenvector $(1,0,0)^T$, 
\[1-\frac{1}{2}\rho\Big( \langle a_{22}^2\rangle_{M_D}- \langle a_{22}a_{33}\rangle_{M_D}\Big),\]
with eigenvector $(0,1,-1)^T$ and
\[1-\frac{1}{2}\rho\Big( \langle a_{22}^2\rangle_{M_D}+ \langle a_{22}a_{33}\rangle_{M_D}\Big),\]
with eigenvector $(0,1,1)^T$.
We have as before : 
\[c_2'(\alpha)=\frac{1}{2}\Big(\langle a_{11}^2\rangle_{M_D}-\langle a_{11}\rangle_{M_D}^2\Big),\]
so the first eigenvalue can be rewritten 
\[1-\frac{1}{2}\rho\Big(\langle a_{11}^2\rangle_{M_D}-\langle a_{11}\rangle_{M_D}^2\Big)=1-\frac{\alpha c_2'(\alpha)}{c_2(\alpha)}=c_2(\alpha)\left(\frac{id}{c_2}\right)'(\alpha)>0,\]
where we have used Proposition \ref{alphasurc} to determine the sign. The  two other eigenvalues are equal to: 
\[1-\frac{1}{4}\rho \Big\langle (a_{22}\pm a_{33})^2\Big\rangle_{M_D}\]
Using the change of variable $A\mapsto P^{13}A(P^{13})^T$ and the parametrisation \eqref{parametrisationSOSphere}, one can see that : 
\begin{align*}
&\int_{SO_3(\R)} (a_{22}\pm a_{33})^2 e^{\frac{\alpha}{2}a_{11}}\,dA=\int_{SO_3(\R)} (a_{22}\pm a_{11})^2 e^{\frac{\alpha}{2}a_{33}}\,dA\\
&=\frac{1}{8\pi^2}\int_{\theta=0}^{2\pi}\int_{\phi_1=0}^{2\pi}\cos^2(\phi_1\pm\theta)\int_{\phi_2=0}^{\pi}\sin\phi_2\,(1\pm\cos\phi_2)^2 e^{\frac{\alpha}{2}\cos\phi_2}\,d\phi_2\,d\phi_1\,d\theta
\end{align*}
so that : 
\[\Big\langle (a_{22}\pm a_{33})^2\Big\rangle_{M_D}=\frac{1}{2}\Big[(1\pm\cos\phi)^2\Big]_{\alpha}\]
where $[\cdot]_\alpha$ is defined in Proposition \ref{c1c2}.
Using the relation $\rho=\frac{\alpha}{c_2(\alpha)}=\frac{4}{[ \sin^2\phi]_\alpha}$ (see Formula \eqref{PierreN}), the two eigenvalues are equal to : 
\[1-\frac{1}{2}\frac{\int_0^\pi \sin\phi(1\pm\cos\phi)^2e^{\frac{\alpha}{2}\cos\phi}\,d\phi}{\int_0^\pi \sin^3\phi e^{\frac{\alpha}{2}\cos\phi}\,d\phi}.\]
With the same technique used in the previous paragraph (Lemma \ref{complique}) it is possible to show that the eigenvalue $1-\frac{1}{4}\rho \Big\langle (a_{22}- a_{33})^2\Big\rangle_{M_D}$ is nonpositive when $\alpha<0$ and nonnegative when $\alpha>0$. The contrary holds for the other eigenvalue (nonnegative when $\alpha<0$ and nonpositive when $\alpha>0$). Finally the signs of these two eigenvalues are always $(+-)$. \\

The conclusion of the proof follows from the study of the roots of Equations \eqref{compatibilityc1} and \eqref{compatibilityc2} provided by Corollary \ref{phasediagramcorollary} and depicted in Figure \ref{phasediagram}. In particular, the cases $\alpha=\alpha^*$ and $\alpha=0$ correspond to the critical cases $\rho=\rho^*$ and $\rho=\rho_c$ which are studied in the next section.
\end{proof}

\begin{rem} 
The above calculations are similar to the computation of the equilibria and their stability of
\[\Psi~: D\in\diag(\mathscr{T})\mapsto \mathcal{F}[M_D]\]
where $\mathcal{F}$ is the free-energy \eqref{freeenergy}. In particular, it can be shown that the equilibria of $\Psi$ and their stability are the same as the ones described in Theorem \ref{equilibriaBGKoperatorstability}. In particular, since \eqref{freeenergy} is also a free energy in the Fokker-Planck case, this analysis shows the instability of equilibria of the Fokker-Planck of the form $\rho M_{D}$ when $D$ is of one of the unstable equilibria of \eqref{ODED} described in Theorem \ref{equilibriaBGKoperatorstability}. This technique is similar to the one which was used in \cite{wangzhou11} in the case of 3D polymers. However, it does not provide global or local convergence of the solution of the Fokker-Planck equation towards an equilibrium. This requires further investigations which will be left for future work. In the Vicsek case \cite{degondfrouvelleliu15}, it was based on a LaSalle's principle and on estimates for the dissipation term. 
\end{rem}

We can finally prove Theorem \ref{theoremconvergence}~:
\begin{proof}[\sc Proof (of Theorem \ref{theoremconvergence})] Thanks to the Duhamel's formula \eqref{duhamel}, the asymptotic behaviour of $f(t)$ as $t\to+\infty$ is given by the asymptotic behaviour of $J_{f(t)}$. Thanks to Proposition \ref{reductionODE} and Corollary \ref{ODEssvd}, we only have to study the asymptotic behaviour of the solution of the ODE \eqref{ODED} where the initial condition is the diagonal part of a SSVD of $J_{f_0}$. Since this equation has a gradient-flow structure \eqref{ODEG}, we know that the solution $D(t)$ converges as $t\to+\infty$ towards an equilibrium $D^\mathrm{eq}$ and consequently $J_{f(t)}\to PD^\mathrm{eq}Q:=J^{\mathrm{eq}}$. Moreover the equilibrium $D^\mathrm{eq}$ is a stable equilibrium provided that $D_0$ does not belong to the stable manifold of an unstable equilibrium. Since these manifolds have dimension at most 2, the union of these manifolds, called $\mathcal{N}_\rho$ is of zero measure and there is convergence towards a stable equilibrium for all $D_0\notin\mathcal{N}_\rho$. In this case, the convergence is locally exponentially fast in the sense that there exist constants $\delta,\lambda,C>0$ such that if $\|J_{f_0}-J^\mathrm{eq}\|\leq\delta$ then for all $t>0$,
\begin{equation}\label{expconvergenceode}\|J_{f(t)}-J^\mathrm{eq}\|\leq Ce^{-\lambda t}.\end{equation}
Let $f^\mathrm{eq}=\rho M_{J^\mathrm{eq}}$. It follows from Duhamel's formula \eqref{duhamel} that for all $A\in SO_3(\R)$~:
\begin{equation}\label{duhameleq}|f(t,A)-f^\mathrm{eq}(A)|\leq e^{-t}|f_0(A)-f^\mathrm{eq}(A)|+\rho e^{-t}\int_0^t e^s|M_{J_{f(s)}}(A)-M_{J^\mathrm{eq}}(A)|ds.\end{equation}
Since $SO_3(\R)$ is compact and $J_{f(t)}$ is bounded uniformly in $t$, there exists a constant $L>0$ such that for all $t>0$, the following Lipschitz bound holds:
\begin{equation}\label{lipschitzM}\forall A\in SO_3(\R),\,\,\,|M_{J_{f(t)}}(A)-M_{J^\mathrm{eq}}(A)|\leq L\|J_{f(t)}-J^\mathrm{eq}\|.\end{equation}
Reporting \eqref{lipschitzM} into \eqref{duhameleq} and using \eqref{expconvergenceode}, the first point of Theorem \ref{theoremconvergence} follows with constants $K=CL/|1-\lambda|$ and $\mu=\min(1,\lambda)$ when $\lambda\ne1$ and $K=CL$ and $\mu=1-\varepsilon$ for any $\varepsilon>0$ when $\lambda=1$.\\

The stability of the equilibria of the dynamical system \eqref{ODED} is given in Theorem \ref{equilibriaBGKoperatorstability}. Finally the conclusions of points 2.(ii) and 2.(iii) follow from the fact that the diagonal parts of the SSVD of the equilibria of type (b) with parameters $\alpha_+$ and $\alpha_1$ are respectively $\alpha_+ I_3$ and $\alpha_1 I_3$.\end{proof}

\subsection{Final remark: characterisation of the stable manifold of the unstable equilibria and critical cases}\label{conclusioncompliquee}

This section is devoted to a more precise description of the subset $\mathscr{N}_\rho$ of zero measure of initial conditions which do not necessarily lead to one of the behaviours detailed in the previous section (see Theorem \ref{theoremconvergence}).\\

When $\rho\ne\rho^*$ and $\rho\ne\rho_c$, all the equilibria of \eqref{ODEDR3} are hyperbolic and we can apply the stable manifold theorem (\cite{perko2013differential} Section 2.7) which states that for a given hyperbolic equilibrium $\widehat{D}^\mathrm{eq}$ of the nonlinear equation \eqref{ODEDR3}, the stable set of $\widehat{D}^\mathrm{eq}$ is a smooth manifold. Its dimension is given by the number of minuses in the signature of $-\Hess \widehat{V}(\widehat{D}^\mathrm{eq})$ and its tangent space at $\widehat{D}^\mathrm{eq}$ is the stable subspace of the linearized system around $\widehat{D}^\mathrm{eq}$. We are now ready to describe the behaviour of the system \eqref{ODEDR3} for a given density $\rho$ and an initial condition $\widehat{D}_0\in\R^3$. Note that we can restrict ourselves to the case $\widehat{D}_0\in\diag^{-1}(\mathscr{D})$ and that our goal is to describe $\diag^{-1}(\mathscr{N}_\rho)$ (in the $\R^3$-framework). We write $D(t)=\diag\widehat{D}(t)$ the solution of \eqref{ODED} with initial condition $D_0=\diag\widehat{D}_0$. \\

\noindent\textbf{Case 1. When $\rho<\rho^*$}\\

When $\rho<\rho^*$, there is only the uniform equilibrium : $D(t)\underset{t\to+\infty}{\longrightarrow}0$ and $\mathscr{N}_\rho=\emptyset$. \\

\noindent\textbf{Case 2. When $\rho^*<\rho<\rho_c$} \\

There are three equilibria for \eqref{ODEDR3} : two are stable ($0$ and $\alpha_+ (1,1,1)^T$) and one is unstable ($\alpha_-(1,1,1)^T$) which satisfies that $-\Hess(\widehat{V})(\alpha_- (1,1,1)^T)$ has signature $(--+)$. The stable manifold theorem ensures that $\diag^{-1}(\mathscr{N}_\rho)$ is the stable set of the unstable equilibrium and is a smooth manifold of dimension 2 and its tangent plane at $\alpha_-(1,1,1)^T$ admits the eigenvectors $(1,-1,0)^T$ and $(0,1,-1)^T$ for basis. The unstable manifold of the unstable equilibrium has dimension : it is the line of direction $(1,1,1)^T$. \\ 

Finally, the space $\R^3$ is partitioned in two domains by $\diag^{-1}(\mathscr{N}_\rho)$ : depending on which domain $\widehat{D}_0$ belongs to in $\R^3\setminus\diag^{-1}(\mathscr{N}_\rho)$, $\widehat{D}(t)$ will converge towards either the uniform equilibrium or to $\alpha_+(1,1,1)^T$ (or towards $\alpha_- (1,1,1)^T$ if $\widehat{D}_0$ belongs to its stable manifold). \\

\noindent\textbf{Case 3. When $\rho>\rho_c$}\\

When $\rho>\rho_c$, $\widehat{D}(t)$ can converge towards one of the four equilibria : $0$, $\alpha_3(-1,-1,1)^T$, $\alpha_2(1,0,0)^T$ or $\alpha_1 (1,1,1)^T$ (see Figure \ref{phasediagram}).
\begin{enumerate}[$\bullet$]
\item The uniform equilibrium is unstable and $-\Hess(\widehat{V})(0)$ has signature $(+++)$ therefore, there is no stable direction and $D(t)$ cannot converge towards 0 unless $D_0=0$ (and then $D(t)=0$ for all $t\in\R_+$).
\item The equilibrium $\alpha_3\diag(-1,-1,1)$ is unstable and its stable manifold $\mathcal{M}_1$ has dimension 1: it is the half line $\R_+^*\left(\begin{array}{c}1\\1\\-1\end{array}\right)$ (it is indeed an invariant manifold of dimension 1 and a solution on this half-line cannot converge towards an other equilibrium). Therefore $D(t)$ converges towards $\alpha_3\diag(-1,-1,1)$ if and only if $\widehat{D}_0$ lies on this half line.
\item The equilibrium $\alpha_2(1,0,0)^T$ is unstable and its stable manifold has dimension 2 with a tangent plane generated by the two vectors $(1,0,0)^T$ and $(0,1,-1)^T$ : it is the plane $\{d_2+d_3=0\}$. We note that this plane is an invariant manifold, so if $\widehat{D}_0$ belongs to this plane, the limit when $t\to+\infty$ also belongs to this plane. But since the half-lines $\R_+^*\left(\begin{array}{c}1\\1\\-1\end{array}\right)$ and $\R_+^*\left(\begin{array}{c}1\\-1\\1\end{array}\right)$ are the stable manifolds of the equilibria $\alpha_3(-1,-1,1)^T$ and $\alpha_3(-1,1,-1)^T$, the limit when $t\to+\infty$ is $\alpha_2(1,0,0)^T$ if and only if $D_0$ belongs to the (open) quarter of the plane $\{d_2+d_3=0\}$ delimited by these two half-lines: this is the stable manifold of $\alpha_2\diag(1,0,0)$. In conclusion, $D(t)$ converges towards $\alpha_2\diag(1,0,0)$ if and only if $\widehat{D}_0$ is of the form $(d_1,d_2,-d_2)^T$ with $d_1>d_2\geq0$. Since $\widehat{D}_0\in\diag^{-1}(\mathscr{D})$, we can restrict ourselves to the eighth of plan $\mathcal{M}_2$ delimited by the half lines $\R_+^*(1,1,-1)^T$ excluded and $\R_+^*(1,0,0)^T$ included.
\item In every other cases, $D(t)$ converges towards $\alpha_1 I_3$ which is the only stable equilibrium.
\end{enumerate}

Finally, in the case $\rho>\rho_c$, the subset $\mathscr{N}_\rho\cap\mathscr{D}$ is equal to $\diag\big(\{0\}\cup\mathcal{M}_1\cup\mathcal{M}_2\big)$.\\

\noindent\textbf{The critical cases}\label{criticalcases}\\

We end this section by an informal description of the expected behaviour in the critical cases. 

\begin{enumerate}[$\bullet$]
 \item When $\rho=\rho^*$ the uniform equilibrium is stable and there is an other equilibrium of the form $\alpha^* I_3$ with $\alpha^*>0$. This equilibrium is non hyperbolic: the kernel of $-\Hess \widehat{V}(\alpha^* (1,1,1)^T)$ is one dimensional, spanned by the vector $(1,1,1)^T$. The two other eigenvalues are negative. Therefore, for the system \eqref{ODEDR3}, there exists a center manifold of dimension 1 and a stable manifold of dimension 2, which tangent plane being the orthogonal of $(1,1,1)^T$.\\
 
 If $\widehat{D}_0$ belongs to the stable manifold, $D(t)$ converges exponentially fast towards $\alpha^* I_3$ (\cite{haragus2010local} Theorem 3.22). The stable manifold of dimension 2 delimits two domains and one of them is included in the subset $\{x\in\R^3, x\cdot(1,1,1)^T\geq\alpha^*\}$. If $\widehat{D}_0$ belongs to this domain, then it belongs to a center manifold and $\widehat{D}(t)$ is attracted exponentially fast towards the the line $\R(1,1,1)^T$ and converges at rate $1/t$ towards $\alpha^* (1,1,1)$. In every other cases, $\widehat{D}(t)$ converges exponentially fast towards 0. \\ 
 
 \item When $\rho=\rho_c$ there is one stable equilibrium of the form $\alpha I_3$ with $\alpha>0$. The uniform equilibrium is non hyperbolic and $-\Hess \widehat{V}(0)=0$.\\
 
In the case when $\widehat{D}(t)$ converges towards 0, with a formal computation, the rate of convergence is expected to be either $1/t$ or $1/\sqrt{t}$ depending on $\widehat{D}_0$. The convergence towards the stable anisotropic equilibrium is exponentially fast. 
\end{enumerate}
\section{Macroscopic limit for the stable equilibria}\label{macroscopic}

We now go back to the spatially inhomogeneous model. We want to investigate (at least formally) the hydrodynamic models derived from the BGK equation \eqref{bgk}. To do so, we introduce the scaling $t'=\varepsilon t$ and $x'=\varepsilon x$ for $\varepsilon>0$ and we define $f^\varepsilon(t',x',A):=f(t, x, A)$. After this change of variables in \eqref{bgk} and dropping the primes, we see that $f^\varepsilon$ satisfies the following equation:
\begin{equation}\label{bgkepsilon}
\partial_t f^\varepsilon+(Ae_1\cdot \nabla_x)f^\varepsilon = \frac{1}{\varepsilon}\left(\rho_{f^\varepsilon}M_{J_{f^\varepsilon}}-f^\varepsilon\right).
\end{equation}
We want to investigate the macroscopic limit $\varepsilon\to 0$ with the assumption that $f^\varepsilon$ converges towards a stable equilibrium. Thanks to the results of the last section, we will assume that $f^\varepsilon\to\rho M_{\alpha\Lambda}$ where $\rho=\rho(t,x)$, $\alpha=\rho c_1(\alpha)$, $\alpha\in\R_+$ and $\Lambda=\Lambda(t,x)\in\MM_3(\R)$ (with a notion of convergence as strong as needed). Since the equilibrium is assumed to be stable there are two cases: either $\Lambda\in SO_3(\R)$ (and therefore $\rho>\rho^*$) or $\Lambda=0$ that is to say $f^\varepsilon$ is uniform in the body-orientation variable and converges towards $\rho=\rho(t,x)$ (and therefore $\rho<\rho_c$). For a given time $t\in\R_+$, we will say that $x\in\R^3$ belongs to a disordered region when $\Lambda(t,x)=0$. Otherwise, when $\Lambda(t,x)\in SO_3(\R)$, we will say that $x\in\R^3$ belongs to an ordered region. \\

The purpose of the two next sections is to write at least formally the hydrodynamic equations satisfied by $\rho=\rho(t,x)$ and $\Lambda=\Lambda(t,x)$. First notice that integrating \eqref{bgkepsilon} over $SO_3(\R)$ leads to the conservation law: 
\begin{equation}\label{conservationrho}\partial_t \rho^\varepsilon+\nabla_x\cdot j[f^\varepsilon]=0,\end{equation} 
where $\rho^\varepsilon\equiv\rho_{f^\varepsilon}$ and 
\[j[f^\varepsilon]:=\int_{SO_3(\R)} Ae_1 f^\varepsilon\,dA\equiv J_{f^\varepsilon} e_1.\]
The macroscopic model then depends on the region considered. 
\begin{enumerate}
\item In a disordered region, $j[f^\varepsilon]\to0$ and assuming that the convergence is sufficiently strong, we get that $\partial_t\rho=0$. To obtain more information we will look at the next order in the Chapman-Enskog expansion (Section \ref{diffusionmodel}). 
\item In an ordered region, $j[f^\varepsilon]\to\alpha\Lambda e_1$ where $\alpha=\rho c_1(\alpha)$ and therefore, assuming that the convergence is strong enough:
\[\partial_t \rho+\nabla_x\cdot (\alpha\Lambda e_1)=0.\]
However due to the lack of conserved quantities, we will need specific tools to write an equation satisfied by $\Lambda=\Lambda(t,x)$ in order to obtain a closed system of equations on $(\rho,\Lambda)$. This is the purpose of Section \ref{orderedregion}.
\end{enumerate}
\subsection{Diffusion model in a disordered region}\label{diffusionmodel}

We consider a region where $f^\varepsilon$ converges as $\varepsilon\to0$ to a density $\rho(t,x)$ uniform in the body-attitude variable. The following proposition gives the diffusion model obtained by looking at the next order in the Chapman-Enskog expansion.

\begin{proposition}[Formal]\label{diffusionproposition}
In a disordered region, the density $\rho^\varepsilon$ satisfies formally at first order the following diffusion equation: 
\begin{equation}\label{diffusionrho}\partial_t\rho^\varepsilon=\varepsilon\nabla_x\cdot\left(\frac{\frac{1}{3}\nabla_x\rho^\varepsilon}{1-\frac{\rho^\varepsilon}{\rho_c}}\right),\,\,\,\,\,\,\rho_c=6.\end{equation}
\end{proposition}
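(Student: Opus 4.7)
The plan is a Chapman--Enskog expansion around the uniform leading-order equilibrium. I would write
\[
f^\varepsilon = \rho^\varepsilon + \varepsilon\, f_1^\varepsilon + O(\varepsilon^2),
\]
where $\rho^\varepsilon=\rho_{f^\varepsilon}$ depends only on $(t,x)$ and $f_1^\varepsilon$ is chosen with $\int_{SO_3(\R)} f_1^\varepsilon\,dA=0$ (this is the standard Chapman--Enskog constraint, consistent with $\rho_{f^\varepsilon}$ matching the zeroth moment of $f^\varepsilon$). Since $\int_{SO_3(\R)} A\,dA=0$ by left-invariance of the Haar measure, we have $J_{f^\varepsilon}=\varepsilon J_{f_1^\varepsilon}+O(\varepsilon^2)$, so $J_{f^\varepsilon}$ is small and we may Taylor-expand
\[
M_{J_{f^\varepsilon}}(A) = 1 + J_{f^\varepsilon}\cdot A + O(|J_{f^\varepsilon}|^2).
\]
Plugging these expansions into \eqref{bgkepsilon} and collecting the $O(1)$ terms yields
\[
\partial_t \rho^\varepsilon + (Ae_1\cdot\nabla_x)\rho^\varepsilon \;=\; \rho^\varepsilon J_{f_1^\varepsilon}\cdot A - f_1^\varepsilon.
\]

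Next I would integrate this identity over $SO_3(\R)$. Using $\int A\,dA=0$ (both directly and after noticing that $\int Ae_1\,dA=0$) and the Chapman--Enskog normalisation $\int f_1^\varepsilon\,dA=0$, the integrated equation reduces to $\partial_t\rho^\varepsilon=0$ at leading order, as expected in the disordered regime. The useful information lives in the first moment: multiplying the equation by $A$ and integrating, the left-hand side becomes $\int (Ae_1\cdot\nabla_x\rho^\varepsilon)\,A\,dA$. Writing $Ae_1\cdot\nabla_x\rho^\varepsilon=2(\nabla_x\rho^\varepsilon\otimes e_1)\cdot A$ (which follows from the identity $(u\otimes v)\cdot A=\tfrac12 u\cdot Av$ for the dot product \eqref{produitscalaire}) and applying Lemma~\ref{JAA} (with $n=3$) I obtain
\[
\int_{SO_3(\R)}(Ae_1\cdot\nabla_x\rho^\varepsilon)\,A\,dA \;=\; \tfrac{1}{3}\,\nabla_x\rho^\varepsilon\otimes e_1.
\]
Applying the same lemma on the right-hand side to $\rho^\varepsilon\int(J_{f_1^\varepsilon}\cdot A)A\,dA=\tfrac{\rho^\varepsilon}{6}J_{f_1^\varepsilon}$, I arrive at the linear relation
\[
\tfrac{1}{3}\,\nabla_x\rho^\varepsilon\otimes e_1 \;=\; \tfrac{\rho^\varepsilon}{6}\,J_{f_1^\varepsilon} - J_{f_1^\varepsilon},
\]
which is solvable (when $\rho^\varepsilon\ne\rho_c=6$) and gives
\[
J_{f_1^\varepsilon} \;=\; -\,\frac{\tfrac{1}{3}\,\nabla_x\rho^\varepsilon\otimes e_1}{1-\rho^\varepsilon/\rho_c}.
\]

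Finally, I would return to the exact conservation law \eqref{conservationrho}, namely $\partial_t\rho^\varepsilon+\nabla_x\cdot(J_{f^\varepsilon}e_1)=0$, and substitute $J_{f^\varepsilon}e_1=\varepsilon J_{f_1^\varepsilon}e_1+O(\varepsilon^2)$. Using $(\nabla_x\rho^\varepsilon\otimes e_1)e_1=\nabla_x\rho^\varepsilon$, the flux becomes
\[
J_{f^\varepsilon}e_1 \;=\; -\,\varepsilon\,\frac{\tfrac{1}{3}\,\nabla_x\rho^\varepsilon}{1-\rho^\varepsilon/\rho_c} + O(\varepsilon^2),
\]
which substituted into \eqref{conservationrho} yields precisely \eqref{diffusionrho}. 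The main non-routine steps are (i) identifying the correct tensorial form $2(\nabla_x\rho\otimes e_1)\cdot A$ to be able to invoke Lemma~\ref{JAA}, and (ii) recognising that the algebraic inversion of $\bigl(\tfrac{\rho}{6}-1\bigr)J_{f_1}=\tfrac13\nabla_x\rho\otimes e_1$ produces the denominator $1-\rho/\rho_c$ with the critical value $\rho_c=6$; this is what ultimately links the diffusion coefficient's singularity to the phase-transition threshold already identified in Proposition~\ref{alphasurc}.
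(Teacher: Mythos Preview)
Your proof is correct and follows essentially the same approach as the paper: a Chapman--Enskog expansion around the uniform equilibrium, taking the first moment in $A$, and invoking Lemma~\ref{JAA} twice to evaluate the integrals $\int(J_{f_1^\varepsilon}\cdot A)A\,dA$ and $\int(Ae_1\cdot\nabla_x\rho^\varepsilon)A\,dA$. The only cosmetic difference is notation: your $\nabla_x\rho^\varepsilon\otimes e_1$ is exactly the paper's matrix $R_{\rho^\varepsilon}$ (first column $\nabla_x\rho^\varepsilon$, other columns zero), and you display the intermediate $O(1)$ identity explicitly before taking the $A$-moment, whereas the paper passes to the moment equation directly.
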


\begin{proof} We follow the same calculations as in \cite{degondfrouvelleliu15}~: we write $f^\varepsilon=\rho^\varepsilon+\varepsilon f_1^\varepsilon$ (where $f^\varepsilon_1$ is defined by this relation) and notice that: 
\[J_{f^\varepsilon}=\varepsilon J_{f_1^\varepsilon}\,\,\,\,\,\text{and}\,\,\,\,\,\,M_{J_{f^\varepsilon}}(A)=1+\varepsilon J_{f_1^{\varepsilon}}\cdot A+\OO(\varepsilon^2).\]
Inserting this in \eqref{bgkepsilon}, multiplying by $A$ and integrating over $SO_3(\R)$ leads to: 
\begin{equation}\label{Jint}J_{f_1^\varepsilon}=\rho^\varepsilon\int_{SO_3(\R)} (J_{f_1^\varepsilon}\cdot A)A\,dA-\int_{SO_3(\R)} Ae_1\cdot\nabla_x\rho^\varepsilon AdA+\OO(\varepsilon).\end{equation}
Using Lemma \ref{JAA}, it holds that
\begin{equation}\label{Jint1}\int_{SO_3(\R)} (J_{f_1^\varepsilon}\cdot A)A\,dA=\frac{1}{6}J_{f_1^\varepsilon}.\end{equation}
To compute the second term, we note that $Ae_1\cdot\nabla_x\rho^\varepsilon=2A\cdot R_{\rho^\varepsilon}$ where $R_{\rho^\varepsilon}$ is the matrix, the first column of which is equal to $\nabla_x\rho^\varepsilon$ and the others are equal to zero. Using Lemma \ref{JAA} we obtain 
\begin{equation}\label{Jint2}\int_{SO_3(\R)} Ae_1\cdot\nabla_x\rho^\varepsilon AdA=\frac{1}{3}R_\rho^\varepsilon.\end{equation}
By multiplying \eqref{Jint} by $e_1$, it follows from \eqref{Jint1} and \eqref{Jint2} that : 
\[\left(1-\frac{\rho^\varepsilon}{\rho_c}\right)J_{f_1^{\varepsilon}}e_1=-\frac{1}{3}\nabla_x \rho^\varepsilon+\OO(\varepsilon),\]
which gives the result by inserting this in \eqref{conservationrho}.
\end{proof}

\begin{rem}
This analysis does not depend on the dimension. In $SO_n(\R)$ the same formal result holds: 
\[\partial_t\rho^\varepsilon=\varepsilon\nabla_x\cdot\left(\frac{\frac{1}{n}\nabla_x\rho^\varepsilon}{1-\frac{\rho^\varepsilon}{\rho_c}}\right),\,\,\,\,\,\,\rho_c=2n.\]
\end{rem}

\subsection{Self-organised hydrodynamics in an ordered region}\label{orderedregion}

In the following, for a given density $\rho\in\R_+$, $\alpha(\rho)$ denotes the maximal nonnegative root of $\alpha=\rho c_1(\alpha)$. We are going to prove the following theorem. 

\begin{theorem}[Formal]\label{theoremSOHB} We suppose that $f^\varepsilon\to\rho(x,t) M _{J(x,t)}$ (as strongly as necessary) as $\varepsilon\to0$ where $J(x,t)=\alpha(\rho(x,t))\Lambda(x,t)$ and $\Lambda(x,t)\in SO_3(\R)$. Then $\rho$ and $\Lambda$ satisfy the following system of partial differential equations: 
\begin{subequations}
\label{SOHB}
\begin{align}
&\partial_t \rho+\nabla_x\cdot (\rho c_1(\alpha(\rho))\Lambda e_1)=0 ,\label{SOHB1} \\
&\rho(\partial_t\Lambda+\tilde{c}_2((\Lambda e_1)\cdot\nabla_x)\Lambda)+\tilde{c_3}[(\Lambda e_1)\times\nabla_x\rho]_\times\Lambda \nonumber \\ 
&\hspace{4.2cm}+c_4\rho[-\mathbf{r}_x(\Lambda)\times(\Lambda e_1)+\delta_x(\Lambda)\Lambda e_1]_\times\Lambda=0. \label{SOHB2}
\end{align}
\end{subequations}
where $\tilde{c}_2$, $\tilde{c_3}$, $c_4$ are functions of $\rho$ to be defined later and $\delta$ and $\mathbf{r}$ are the ``divergence'' and ``rotational'' operators defined in \cite{degondfrouvellemerino17} : if $\Lambda(x)=\exp([\mathbf{b}(x)]_\times)\Lambda(x_0)$ with $\mathbf{b}$ smooth around $x_0$ and $\mathbf{b}(x_0)=0$, then
\[\delta_x(\Lambda)(x_0):=\nabla_x\cdot\mathbf{b}(x)|_{x=x_0}\,\,\,\text{and}\,\,\,\mathbf{r}_x(\Lambda)(x_0):=\nabla_x\times\mathbf{b}(x)|_{x=x_0},\]
where $\nabla_x\times$ is the curl operator. 
\end{theorem}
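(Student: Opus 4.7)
The proof combines mass conservation with the method of Generalised Collision Invariants (GCI) introduced in \cite{degondmotsch08} and developed for body-attitude coordination in \cite{degondfrouvellemerino17}. First, integrating \eqref{bgkepsilon} over $SO_3(\R)$ and using that $Q_{BGK}$ preserves mass yields $\partial_t\rho^\varepsilon + \nabla_x\cdot(J_{f^\varepsilon}e_1) = 0$. Passing to the limit $\varepsilon\to 0$ together with the consistency relation \eqref{defc1}, so that $J_{f^\varepsilon}\to \rho\,c_1(\alpha(\rho))\,\Lambda$, gives \eqref{SOHB1}.

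For the evolution of $\Lambda$ I perform a Chapman-Enskog expansion $f^\varepsilon = \rho M_{\alpha\Lambda} + \varepsilon f_1 + O(\varepsilon^2)$. The order $\varepsilon^{-1}$ balance is automatic by the compatibility relation $\alpha=\rho c_1(\alpha)$, and at order $\varepsilon^0$ one obtains
\[\partial_t(\rho M_{\alpha\Lambda}) + (Ae_1\cdot\nabla_x)(\rho M_{\alpha\Lambda}) = L(f_1),\]
where $L$ is the linearisation of $Q_{BGK}$ at $\rho M_{\alpha\Lambda}$. Solvability for $f_1$ requires the left-hand side to be orthogonal to $\ker L^*$. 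A direct duality computation shows that $\psi\in\ker L^*$ must be affine in $A$; imposing the resulting fixed-point conditions via Lemma \ref{JAAg} and the left/right invariance of the Haar measure, one finds that $\ker L^*$ is four-dimensional, spanned by the constants (which produce \eqref{SOHB1}) and by a three-parameter family $\{\psi_\mathbf{b}\}_{\mathbf{b}\in\R^3}$ of the form $\psi_\mathbf{b}(A) = ([\mathbf{b}]_\times\Lambda)\cdot A$, associated with infinitesimal left rotations $\delta\Lambda = [\mathbf{b}]_\times\Lambda$ parametrising $T_\Lambda SO_3(\R)$. Testing the equation above against each $\psi_\mathbf{b}$ kills the right-hand side and produces three scalar identities.

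Reassembling these identities into a vector equation yields \eqref{SOHB2}. Expanding $\partial_t(\rho M_{\alpha\Lambda})$ by the chain rule and $(Ae_1\cdot\nabla_x)(\rho M_{\alpha\Lambda})$ while decomposing the spatial variations of $\Lambda$ via the operators $\delta_x(\Lambda)$ and $\mathbf{r}_x(\Lambda)$ of \cite{degondfrouvellemerino17}, the resulting contributions become quadratic moments of the form $\int h(A\cdot\Lambda)\,A\otimes A\, M_{\alpha\Lambda}\,dA$ with $h$ a polynomial; these are evaluated using the Rodrigues parametrisation of Section \ref{volumeforms} together with Lemmas \ref{JAA} and \ref{JAAg}, producing the explicit expressions for $\tilde{c}_2(\rho)$, $\tilde{c}_3(\rho)$ and $c_4(\rho)$. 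The main obstacle, and the point where the argument departs from the Fokker-Planck derivation of \cite{degondfrouvellemerino17}, is the contribution of the symmetric part of the linearised flux $J_{f_1}\Lambda^T$: because the non-normalised BGK operator relaxes toward $M_{J_f}$ using the full moment rather than its orthogonal part $PD(J_f)$, the symmetric sector feeds back into the three GCI identities through an additional $\Lambda$-dependent term, which is precisely what produces the $[(\Lambda e_1)\times\nabla_x\rho]_\times\Lambda$ contribution and the modified coefficient $\tilde{c}_3$. Isolating this new term cleanly, via the structure constants provided by Lemma \ref{JAAg}, is the delicate computational heart of the argument.
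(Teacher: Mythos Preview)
Your derivation of \eqref{SOHB1} is fine, and your GCI test functions $\psi_{\mathbf b}(A)=([\mathbf b]_\times\Lambda)\cdot A$ agree (up to sign conventions) with the paper's $\psi_P^\Lambda(A)=-P\cdot\Lambda^T A$, $P\in\mathscr A_3(\R)$. However, your explanation of the origin of the modified coefficient $\tilde c_3$ is incorrect, and this is not a cosmetic point: it is the whole content of the computation.

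You attribute the new contribution to ``the symmetric part of the linearised flux $J_{f_1}\Lambda^T$'' feeding back through the non-normalised relaxation. That mechanism plays no role here. The paper does not perform a Chapman--Enskog expansion and never looks at $f_1$; it tests \eqref{bgkepsilon} directly against the GCI $\psi_P^{\Lambda^\varepsilon}$, obtains the exact identity
\[
\int_{SO_3(\R)}\big(\partial_t f^\varepsilon+(Ae_1\!\cdot\!\nabla_x)f^\varepsilon\big)\,P\cdot(\Lambda^\varepsilon)^T A\,dA=0,
\]
and passes to the limit. The GCI themselves are defined via the \emph{frozen} operator $\mathcal L_J(f)=\rho_f M_J-f$ together with the constraint $P_{T_\Lambda}(J_f)=0$, and they coincide with those of \cite{degondfrouvellemerino17}; nothing special about the non-normalised operator enters at this stage. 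The genuine difference with \cite{degondfrouvellemerino17} appears only when one expands the transport of the equilibrium: here the concentration parameter is $\alpha=\alpha(\rho(t,x))$, so
\[
\partial_t(M_{\alpha\Lambda})=\alpha'\,\partial_t\rho\,(A\cdot\Lambda-\tfrac32 c_1(\alpha))M_{\alpha\Lambda}+\alpha\,\partial_t\Lambda\cdot(A-\langle A\rangle)M_{\alpha\Lambda},
\]
and similarly for the spatial derivatives. The $\alpha'(\rho)$ pieces produce an additional block $Y=Y_1+Y_2$, proportional to $\rho\alpha'\,[e_1\times\Lambda^T\nabla_x\rho]_\times$, which is absent in the normalised model (where the analogous parameter is constant) and is exactly what shifts $c_3$ to $\tilde c_3=\tfrac{1}{\alpha}+\tfrac{\rho\alpha'}{\alpha}\big(\tfrac32 c_1(\alpha)+\tfrac12\{(1+2\cos\theta)\sin^2\theta\}_\alpha/\{\sin^2\theta\}_\alpha\big)$. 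Your write-up never mentions $\alpha'(\rho)$, so the actual source of the new term is missing from your argument.
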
 

The first equation \eqref{SOHB1} is the conservation law \eqref{conservationrho} and the goal is to obtain the equation \eqref{SOHB2} for $\Lambda=\Lambda(t,x)$. However, here and contrary to the classical gas dynamics, the total momentum is not conserved:
\[\frac{d}{dt}\int_{SO_3(\R)} f A\,dA\ne0\]
and we therefore cannot deduce easily a closed system of equations. This lack of conserved quantities is specific to self-propelled particle models such as the Vicsek model. The main tool to tackle the problem will be the Generalised Collision Invariants method introduced in \cite{degondmotsch08} for the study of the hydrodynamic limit of the continuum Vicsek model. Its precise setting in the context of our body-attitude model is detailed Section \ref{sectionGCI}. The formal proof of Theorem \ref{theoremSOHB} can be found in Section \ref{sectionhydrodynamiclimit}.

\subsubsection{Generalised collision invariants}\label{sectionGCI}

To obtain an equation on $\Lambda$, the main tool are the Generalised Collisional Invariants (GCI) first introduced in \cite{degondmotsch08}. For a given $J\in \MM_3(\R)$, we define first the linear collision operator: 
\[\mathcal{L}_J(f)=\rho_f M_J-f,\]
so that $Q_{BGK}(f)=\mathcal{L}_{J_f}(f)$. Let $J\in\MM_3(\R)$ with $\det J>0$ and let $\Lambda~:=PD(J)\in SO_3(\R)$ be the orthogonal part of its polar decomposition. The set of GCI associated to $J$ is defined as:
\begin{equation}\label{defGCI}\mathscr{C}_{J}~:=\left\{\psi~: SO_3(\R)\to\R,\,\,\,\int_{SO_3(\R)} \mathcal{L}_J(f)\,\psi\,dA=0\,\,\,\text{for all}\,\,f\,\,\text{such that}\,\,P_{T_\Lambda}(J_f)\right\}.\end{equation}
The condition $\psi\in \mathscr{C}_J$ is equivalent to: 
\[\int_{SO_3(\R)} f(\langle \psi\rangle_{M_J}-\psi)\,dA=0\,\,\,\,\,\,\text{for all}\,\,\, f\,\,\,\text{such that}\,\,\, P_{T_\Lambda}(J_f)=0.\]
Therefore, following the ideas of the proof of \cite[Proposition 4.3]{degondfrouvellemerino17}, we have: 
\[\psi\in\mathscr{C}_J\,\,\,\Longleftrightarrow\,\,\,\exists B\in T_{\Lambda},\,\,\,\langle \psi\rangle_{M_J}-\psi(A)=B\cdot A,\]
that is to say: 
\[\psi\in\mathscr{C}_{J}\,\,\,\Longleftrightarrow\,\,\,\exists B\in T_{\Lambda},\,\,\exists C\in\R,\,\,\,\psi(A)=-B\cdot A + C,\]
or equivalently since $B\in T_\Lambda$ means that there exists $P\in\mathscr{A}_3(\R)$ such that $B=\Lambda P$~:
\[\mathscr{C}_J = \Span\left(1,\underset{P\in\mathscr{A}_3(\R)}{\bigcup} \psi^\Lambda_P\right),\]
where
\[\psi_P^\Lambda(A)=-P\cdot\Lambda^T A.\]
Now for any $P\in\mathscr{A}_3(\R)$, denoting $\Lambda_{f^\varepsilon}\equiv \Lambda^\varepsilon=PD(J_{f^\varepsilon})$, we get by multiplying the equation \eqref{bgkepsilon} by $\psi_P^{\Lambda^\varepsilon}$~: 
\begin{equation}\label{equationGCI}\int_{SO_3(\R)} (\partial_t f^\varepsilon +Ae_1\cdot \nabla_x f^\varepsilon)P\cdot(\Lambda^\varepsilon)^TA\,dA=0.\end{equation}
The right-hand side vanishes by Definition \eqref{defGCI} of the GCI. Note that if $f^\varepsilon\to\rho M_J$ with $\det(J)>0$, we have also $\det(J_{f^\varepsilon})>0$ for $\varepsilon$ sufficiently small and $\Lambda^\varepsilon\in SO_3(\R)$.

\subsubsection{Hydrodynamic limit (formal proof of Theorem \ref{theoremSOHB})}\label{sectionhydrodynamiclimit}

Taking formally the limit $\varepsilon\to0$ in \eqref{equationGCI} and since it is true for all $P\in\mathscr{A}_3(\R)$, we obtain: 
\[X~:=\int_{SO_3(\R)} \Big(\partial_t(\rho M_{\alpha\Lambda})+Ae_1\cdot\nabla_x(\rho M_{\alpha\Lambda})\Big)(\Lambda^T A-A^T\Lambda)\,dA=0.\]
We have: 
\begin{eqnarray*}
\partial_t (M_{\alpha\Lambda})&=& \Big(\partial_t(\alpha\Lambda)\cdot A - \langle \partial_t(\alpha\Lambda)\cdot A\rangle_{M_{\alpha\Lambda}}\Big) M_{\alpha\Lambda} \\ 
&=& \alpha'\partial_t\rho (A\cdot\Lambda-\langle A\cdot \Lambda\rangle_{M_{\alpha\Lambda}})M_{\alpha\Lambda} + \alpha\partial_t \Lambda\cdot(A-\langle A\rangle_{M_{\alpha\Lambda}})M_{\alpha\Lambda},
\end{eqnarray*}
where $\alpha'$ denotes the derivative of $\alpha(\rho)$ with respect to $\rho$ and similarly for $\partial_i(M_{\alpha\Lambda})$. With this we compute the term: 
\begin{multline*}
(\partial_t+Ae_1\cdot\nabla_x)(\rho M_{\alpha\Lambda}) = \\
M_{\alpha\Lambda}(A)\left(1+\rho\alpha'\left(A\cdot \Lambda-\frac{3}{2}c_1(\alpha)\right)\right)(\partial_t+Ae_1\cdot\nabla_x)\rho \\
+ M_{\alpha\Lambda}(A)\rho\alpha A\cdot(\partial_t+Ae_1\cdot\nabla_x)\Lambda,
\end{multline*}
where we have used that:
\[\langle A\cdot\Lambda\rangle_{M_{\alpha\Lambda}} = \frac{3}{2}c_1(\alpha),\]
and
\[\Lambda\cdot(\partial_t+Ae_1\cdot\nabla_x)\Lambda=0.\]
Most of the terms that appear in $X$ are computed in \cite{degondfrouvellemerino17}. Precisely, 
\begin{equation}\label{XXX}X=X_1+X_2+X_3+X_4+Y\end{equation}
where $X_1$, $X_2$, $X_3$ and $X_4$ are computed in \cite{degondfrouvellemerino17}~:
\begin{align*}
&X_1:=\int_{SO_3(\R)}\partial_t\rho M_{\alpha\Lambda}(A)(\Lambda^TA-A^T\Lambda)\,dA=0,\\
&X_2:=\int_{SO_3(\R)} \alpha\rho(A\cdot\partial_t\Lambda) M_{\alpha\Lambda}(A)(\Lambda^T A-A^T\Lambda)\,dA=C_2\rho\alpha\Lambda^T\partial_t\Lambda,\\
&X_3:=\int_{SO_3(\R)} Ae_1\cdot\nabla_x\rho M_{\alpha\Lambda}(A)(\Lambda^TA-A^T\Lambda)\,dA=C_3[e_1\times\Lambda^T\nabla_x\rho]_\times,\\
&X_4:=\int_{SO_3(\R)} \rho\alpha\big(A\cdot (Ae_1\cdot\nabla_x)\Lambda\big)M_{\alpha\Lambda}(A)(\Lambda^TA-A^T\Lambda)\,dA\\
&\hspace{6cm}=\rho\alpha(C_4[Le_1]_\times+C_5[L^T e_1+\Tr(L)e_1]_\times),\end{align*}
where the coefficients
\[C_2=C_3:=\frac{2}{3}\{\sin^2\theta\}_\alpha,\,\,\,\,C_4:=\frac{2}{15}\{\sin^2\theta(1+4\cos\theta)\}_\alpha,\,\,\,\,\text{and}\,\,\,\,C_5:=\frac{2}{15}\{\sin^2\theta(1-\cos\theta)\}_\alpha,\]
and the matrix
\[L:=\Lambda^T\mathcal{D}_x(\Lambda)\Lambda,\]
are the same as in \cite{degondfrouvellemerino17}. The matrix $\mathcal{D}_x(\Lambda)\in\MM_3(\R)$ is defined as the unique matrix such that for all $\mathbf{w}\in\R^3$, and smooth functions $\Lambda:\R^3\to SO_3(\R)$, 
\[(\mathbf{w}\cdot\nabla_x)\Lambda=[\mathcal{D}_x(\Lambda)\mathbf{w}]_\times\Lambda\]
(see \cite[Section 4.5]{degondfrouvellemerino17}). Note that $C_3=C_2$ since the noise and alignement parameters which were denoted by $\nu$ and $d$ in \cite{degondfrouvellemerino17} have been taken equal to 1 here. Note also that these coefficients are functions of $\rho$ (through $\alpha$ only). The term $Y$ is an additional term which appears here due to the presence of the parameter $\alpha=\alpha(\rho)$ which is a function of $\rho$. It depends also on the derivative $\alpha'$ of $\alpha$~:
\[Y~:=\rho\alpha'\int_{SO_3(\R)} (\Lambda^TA-A^T\Lambda)M_{\alpha\Lambda}(A)\left(A\cdot \Lambda-\frac{3}{2}c_1(\alpha)\right)(\partial_t+Ae_1\cdot\nabla_x)\rho \,dA.\]
All the terms that involve the time derivative of $\rho$ are equal to zero since $\partial_t\rho$ does not depend on $A$ and with the change of variable $A' =\Lambda A^T\Lambda$ which has unit jacobian, it holds that $A'\cdot\Lambda=A\cdot\Lambda$ and therefore 
\begin{multline*}\int_{SO_3(\R)} (\Lambda^TA-A^T\Lambda)M_{\alpha\Lambda}(A)\left(A\cdot \Lambda-\frac{3}{2}c_1(\alpha)\right)\,dA\\=-\int_{SO_3(\R)} (\Lambda^TA'-A'^T\Lambda)M_{\alpha\Lambda}(A')\left(A'\cdot \Lambda-\frac{3}{2}c_1(\alpha)\right)\,dA'=0.\end{multline*}
We thus have: 
\[Y=Y_1+Y_2,\]
where
\[Y_1~: = \rho\alpha'\int_{SO_3(\R)} (A\cdot\Lambda)(Ae_1\cdot\nabla_x)\rho \,(\Lambda^TA-A^T\Lambda)M_{\alpha\Lambda}(A)\,dA,\]
and
\[Y_2~:= \frac{3}{2}c_1(\alpha)\rho\alpha'\int_{SO_3(\R)} (Ae_1\cdot\nabla_x)\rho\, (\Lambda^T A-A^T\Lambda)M_{\alpha\Lambda}(A)\,dA.\]
With the change of variable $A\mapsto \Lambda^T A$ these terms become
\[Y_1=\rho\alpha'\int_{SO_3(\R)} (\Lambda Be_1\cdot\nabla_x\rho)(B-B^T)(B\cdot I_3) M_{\alpha I_3}(B)\,dB,\]
\[Y_2~: = \frac{3}{2}c_1(\alpha)\rho\alpha' \int_{SO_3(\R)} (\Lambda Be_1\cdot \nabla_x\rho)(B-B^T)M_{\alpha I_3}(B)\,dB,\]
and they can be computed using the same techniques as in \cite{degondfrouvellemerino17} or lemma \ref{JAAg} in the appendix. More precisely, we can write 
\[\Lambda Be_1\cdot\nabla_x\rho = B\cdot R_{1,\rho},\]
where $R_{1,\rho}$ is the matrix, the first column of which is equal to $2\Lambda^T \nabla_x\rho$ and the others are all equal to zero. It satisfies:
\[\frac{R_{1,\rho}-R_{1,\rho}^T}{2}= [e_1\times\Lambda^T\nabla_x\rho]_\times.\]
By the change of variable $B\mapsto B^T$, we have 
\begin{multline*}
Y_1=\rho\alpha'\int_{SO_3(\R)} (B\cdot R_{1,\rho})(B-B^T)(B\cdot I_3) M_{\alpha I_3}(B)\,dB,\\
= \rho\alpha'\int_{SO_3(\R)} A\cdot(R_{1,\rho}-R_{1,\rho}^T)A(A\cdot I_3)M_{\alpha I_3}(A)\,dA.
\end{multline*}
This integral is of the form \eqref{PierreX} where 
\[g(A):=A\cdot I_3M_{\alpha I_3}(A)\]
is invariant by transposition and conjugation and 
\[J:=R_{1,\rho}-R_{1,\rho}^T\]
is a skew-symmetric matrix. From \eqref{PierreY} and \eqref{PierreZ} we get 
\[Y_1=\rho\alpha\mu(R_{1,\rho}-R_{1,\rho}^T)\]
with 
\[\mu:=\frac{1}{8}\int_{SO_3(\R)}(a_{21}-a_{12})^2\Tr(A)M_{\alpha I_3}(A)\,dA.\]
The first term $Y_1$ can therefore be written: 
\[Y_1=2\mu\rho\alpha'[e_1\times\Lambda^T\nabla_x\rho]_\times,\]
and using Rodrigues' formula we obtain: 
\[2\mu=\frac{1}{3}\Big\{ \left(1+2\cos\theta\right)\sin^2(\theta)\Big\}_{\alpha}\]
where $\{\cdot\}_\alpha$ has been defined in Proposition \ref{c1c2}.
Similarly the second term $Y_2$ can be written: 
\[Y_2=\frac{3}{2}c_1(\alpha) C_3\rho\alpha'[e_1\times\Lambda^T\nabla_x\rho]_\times,\]
where the coefficient $C_3$ is the same as in \cite{degondfrouvellemerino17}~: 
\[C_3=\frac{2}{3}\{\sin^2\theta\}_\alpha=C_2.\]
Finally, we obtain: 
\[Y=\rho\alpha'\left(\frac{3}{2}c_1(\alpha)C_3+\frac{1}{3}\Big\{ \left(1+2\cos\theta\right)\sin^2(\theta)\Big\}_{\alpha}\right)[e_1\times\Lambda^T\nabla_x\rho]_\times.\]

Putting all the terms together, we can conclude as in \cite{degondfrouvellemerino17}. First we notice that:
\[\Tr(L)=\delta_x(\Lambda),\,\,\,\,[\Lambda L^T e_1]_\times=[(\mathcal{D}_x(\Lambda)-[\mathbf{r}_x(\Lambda)]_\times)\Lambda e_1]_\times\,\,\,\,\text{and}\,\,\,\,[\Lambda Le_1]_\times\Lambda=\big((\Lambda e_1)\cdot\nabla_x\big)\Lambda.\]
Therefore we obtain by multiplying \eqref{XXX} by $\Lambda$ and dividing by $\alpha C_2$~:
\begin{multline*}
\rho(\partial_t\Lambda+c_2((\Lambda e_1)\cdot\nabla_x)\Lambda)+\tilde{c_3}[(\Lambda e_1)\times\nabla_x\rho]_\times\Lambda \\
+c_4\rho[-\mathbf{r}_x(\Lambda)\times(\Lambda e_1)+\delta_x(\Lambda)\Lambda e_1]_\times\Lambda=0,
\end{multline*}
where the coefficients
\[\tilde{c}_2:=\frac{C_4+C_5}{C_2}=\frac{1}{5}\frac{\{\sin^2\theta(2+3\cos\theta)\}_\alpha}{\{\sin^2\theta\}_\alpha}\,\,\,\,\text{and}\,\,\,\,c_4:=\frac{C_5}{C_2}=\frac{1}{5}\frac{\{\sin^2\theta(1-\cos\theta)\}_\alpha}{\{\sin^2\theta\}_\alpha}\]
are respectively equal to the coefficients $c_2$ and $c_4$ in \cite{degondfrouvellemerino17} and the coefficient $c_3$ in \cite{degondfrouvellemerino17} (which is equal to $1$) becomes: 
\[\tilde{c_3}=\frac{1}{\alpha}+\frac{\rho\alpha'}{\alpha}\left(\frac{3}{2}c_1(\alpha)+\frac{1}{2}\frac{\Big\{(1+2\cos\theta)\sin^2\theta\Big\}_\alpha}{\{ \sin^2\theta\}_\alpha}\right).\]

\section{Conclusion}

In this work, we have presented a new BGK model of body-attitude coordination where agents are described by a rotation matrix. Starting from the kinetic level (a space homogeneous BGK equation) we have drawn a parallel between our Vicsek-type model and the models of nematic alignment of polymers.  
We then have deduced the equilibria of the system and have shown a phase transition phenomenon triggered by the density of agents. Thanks to a gradient-flow structure specific to the BGK equation we have been able to describe the asymptotic behaviour of the system. Finally, we have derived the macroscopic models (SOHB) in the spatially inhomogeneous case. \\ 

On the modelling side, a rigorous mean-field limit which leads to the BGK equation is currently under study and will be the object of future work. However, many other questions remain open. At the kinetic level, our study relies on the dimension 3 and it would be interesting to extend the ideas developed here in $SO_n(\R)$, $n\geq3$, for example by drawing new parallels with higher dimensional polymers models or other similar models \cite{giacomin2012transitions,giacomin2012global}. In addition, the mathematical and numerical analyses of the macroscopic SOHB model are still in progress. \\ 

The BGK model studied here is a step towards the full description of the models of collective behaviour depicted in Figure \ref{models}. The tools and ideas that we have presented here may help to analyse other models of body-attitude coordination such as the non-normalised Fokker-Planck model in $SO_3(\R)$. Other models which take into account curvature control in addition to body-orientation, in the spirit of \cite{degond2011macroscopic}, could also be considered. 

\appendix
\section{Quaternions and rotations}\label{quaternionsandrotations}

These appendix is devoted to the proof of Proposition \ref{SO3quaternions}. We also give additional results about quaternions. The following lemma gives a link between quaternions and the theory of $Q$-tensors. 

\begin{lemma} Let $\SS_4^0(\R)$ be the space of symmetric $4\times4$ trace free matrices. If $Q\in \SS_4^0(\R)$ has two eigenvalues with eigenspaces of dimensions 1 and 3, then $Q$ can be written 
\[Q=\alpha\left(q\otimes q-\frac{1}{4}I_4\right),\]
for a given unit quaternion $q$ seen as a vector of $\R^4$. A matrix of this form is called a uniaxial $Q$-tensor. When $\alpha=1$ we will say that $Q$ is a normalised uniaxial $Q$-tensor. 
\end{lemma}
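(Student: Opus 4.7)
The plan is to use the spectral theorem together with the trace-free condition, which will pin down the single remaining scalar degree of freedom.

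First, since $Q\in\SS_4^0(\R)$ is symmetric, I would invoke the spectral theorem to diagonalise $Q$ in an orthonormal basis of $\R^4$. By hypothesis there are exactly two distinct eigenvalues $\lambda_1$ and $\lambda_2$ whose eigenspaces have dimensions $1$ and $3$. Let $q\in\R^4$ be a unit eigenvector for $\lambda_1$; then the $3$-dimensional eigenspace of $\lambda_2$ is precisely the orthogonal complement $q^\perp$, since eigenspaces of a symmetric matrix corresponding to distinct eigenvalues are orthogonal and the dimensions sum to $4$.

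Next I would write $Q$ in terms of the orthogonal projectors onto these two eigenspaces. The projector onto $\Span(q)$ is $q\otimes q$, and the projector onto $q^\perp$ is $I_4-q\otimes q$. Therefore
\[
Q=\lambda_1\,q\otimes q+\lambda_2\bigl(I_4-q\otimes q\bigr)=(\lambda_1-\lambda_2)\,q\otimes q+\lambda_2 I_4.
\]
The trace-free condition $\Tr(Q)=\lambda_1+3\lambda_2=0$ gives $\lambda_2=-\lambda_1/3$, so $\lambda_1-\lambda_2=\tfrac{4}{3}\lambda_1$ and $\lambda_2 I_4=-\tfrac{\lambda_1}{3}I_4=-\tfrac{4\lambda_1}{3}\cdot\tfrac{1}{4}I_4$. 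Substituting yields
\[
Q=\frac{4\lambda_1}{3}\left(q\otimes q-\frac{1}{4}I_4\right),
\]
so setting $\alpha:=4\lambda_1/3$ proves the claim. Finally, the identification of $q$ with a unit quaternion is simply the statement that $\HH\cong\Sph^3\subset\R^4$ as a set.

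There is no real obstacle here: the only subtlety is checking that the two eigenspaces are orthogonal and that the $3$-dimensional one is exactly $q^\perp$, but this is immediate from the dimension count and the symmetry of $Q$. The proof is essentially a one-line algebraic manipulation once the spectral decomposition is written down.
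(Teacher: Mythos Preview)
Your proof is correct and follows essentially the same approach as the paper: both invoke the spectral theorem and use the trace-free condition to identify the scalar $\alpha$. The paper phrases the diagonalisation via an orthogonal conjugation $Q=\tfrac{\alpha}{4}P\diag(3,-1,-1,-1)P^T$ and takes $q$ to be the first column of $P$, which is exactly your projector decomposition written in coordinates.
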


\begin{proof}
Let $Q\in \SS_4^0(\R)$ such that $Q$ has two eigenvalues with eigenspaces of dimensions 1 and 3. By the spectral theorem, there exists $P\in \mathcal{O}_3(\R)$ such that for a given $\alpha>0$~:
\[Q=\frac{\alpha}{4}P\diag(3,-1,-1,-1)P^T=\alpha P\diag(1,0,0,0)P^T-\frac{\alpha}{4} I_4,\]
and the result follows by taking $q$ equals to the first column of $P$. 
\end{proof}

\begin{proof}[\sc Proof of Proposition \ref{SO3quaternions}]
\begin{enumerate}[1.]
\item The group isomorphism $\Phi$ is explicitly computed in \cite{salamin1979application}. In particular, let $q\in \Sph^3/\pm1$. The matrix $A=\Phi(q)$ is defined for all purely imaginary quaternion $u\in\HH$ by $A[u]=[quq^*]$ where $(u_1,u_2,u_3)^T=:[u]\in\R^3$ is the vector associated to $u=u_1i+u_2j+u_3k$. More explicitly, if $q=x+iy+zj+tk$, then 
\[A=\left(\begin{array}{ccc}
x^2+y^2-z^2-t^2 & 2(yz-xt) & 2(xz+yt) \\ 
2(xt+yz) & x^2-y^2+z^2-t^2 & 2(zt-xy) \\ 
2(yt-xz) & 2(xy+zt) & x^2-y^2-z^2+t^2
\end{array}
\right).
\]
Note that we have identified $q\in\HH$ and its equivalence class in $\Sph^3/\pm1$ and that $\Phi$ is well defined since only quadratic expressions are involved. The fact that this group isomorphism is an isometry follows from \cite[Proposition A.3]{degondfrouvellemerinotrescases18}  and \cite[Lemma 4.2]{degondfrouvellemerino17}.
\item The expression
\[J\cdot A = \frac{1}{2} \Tr(\Phi(q)^T J)\]
is a quadratic form for $q$. We take $Q$ the matrix associated to this quadratic form. For $J=(J_{ij})_{i,j}$, using the explicit form of $A=\Phi(q)$ with $q=x+yi+zj+tk$ we obtain: 
\[Q=\frac{1}{4}\left(\begin{array}{cccc}
{J_{11}+J_{22}+J_{33}} & J_{32}-J_{23} & J_{13}-J_{31} & J_{21}-J_{12} \\ 
J_{32}-J_{23} & {J_{11}-J_{22}-J_{33}} & J_{12}+J_{21} & J_{13}+J_{31} \\
J_{13}-J_{31} & J_{12}+J_{21} & {-J_{11}+J_{22}-J_{33}} & J_{23}+J_{32} \\ 
J_{21}-J_{12} & J_{13}+J_{31} &  J_{23}+J_{32} & {-J_{11}-J_{22}+J_{33}}
\end{array}
\right).
\]
This is an isomorphism since $\dim \MM_3(\R)=\dim \SS_4^0(\R)$ and $J$ can be obtained from $Q$ similarly. Moreover, since 
the bilinear matrix associated to a quadratic form is uniquely defined, if $Q\in\SS_4^0(\R)$ is such that $\frac{1}{2}J\cdot\Phi(q)=q\cdot Qq$ for all $q\in\Sph^3/\pm1$, then $Q=\phi(J)$. 
\item To prove the third point, we note that a unit quaternion can be seen as a rotation in $\R^3$ in a more geometrical way (\cite[Section 5.1]{degondfrouvellemerinotrescases18}): for $\theta\in[0,\pi]$ and $\mathbf{n}=(n_1,n_2,n_3)^T\in\Sph^2$, let us define the unit quaternion $q$ by
\[q=\cos\frac{\theta}{2}+\sin\frac{\theta}{2}(n_1i+n_2j+n_3k),\]
The unit quaternion $q$ represents the rotation of angle $\theta\in[0,\pi]$ and axis $\mathbf{n}\in\Sph^2$ in the sense that if $R(\theta,\mathbf{n})\in SO_3(\R)$ denotes the matrix associated to the rotation of angle $\theta\in[0,\pi]$ around the axis $\mathbf{n}\in\Sph^2$ then $\Phi(q)=R(\theta,\mathbf{n})$. Note that $q$ and $-q$ represent the same rotation so $\Phi(q)$ is well defined by identifying $q$ with its equivalence class in $\Sph^3/\pm1$.\\

In $\R^3$ the composition of two rotations of respective angles and axis $(\theta,\mathbf{n})\in[0,\pi]\times\Sph^2$ and $(\theta',\mathbf{n}')\in[0,\pi]\times\Sph^2$ is itself a rotation: we have $R(\theta,\mathbf{n})R(\theta',\mathbf{n}')=R(\hat{\theta},\hat{\mathbf{n}})$ where the angle $\hat{\theta}\in[0,\pi]$ is defined by
\[\cos\frac{\hat{\theta}}{2}=\cos\frac{\theta}{2}\cos\frac{\theta'}{2}-\mathbf{n}\cdot\mathbf{n}'\sin\frac{\theta}{2}\sin\frac{\theta'}{2}.\]
Note that $\cos(\hat{\theta}/ {2})=q\cdot \bar{q}'$ where $q$ and $q'$ are the associated unit quaternions seen as vectors of dimension 4. In particular the dot product of two rotations matrices is
\[R(\theta,\mathbf{n})\cdot R(\theta',\mathbf{n}')=\frac{1}{2}\Tr\Big(R(\theta,\mathbf{n})R(\theta',-\mathbf{n}')\Big)=\frac{1}{2}(2\cos\tilde{\theta}+1)\]
where
\[\cos\frac{\tilde{\theta}}{2}=\cos\frac{\theta}{2}\cos\frac{\theta'}{2}+\mathbf{n}\cdot\mathbf{n}'\sin\frac{\theta}{2}\sin\frac{\theta'}{2}.\]
Besides, for the quaternions $q$ and $q'$ respectively associated to the rotations $R(\theta,\mathbf{n})$ and $R(\theta',\mathbf{n}')$, we have: 
\[q'\cdot Qq'=(q\cdot q')^2-\frac{1}{4}=\cos^2\frac{\tilde{\theta}}{2}-\frac{1}{4}=\frac{1}{4}(2\cos\tilde{\theta}+1),\]
where $Q$ is the normalised uniaxial $Q$-tensor: 
\[Q=q\otimes q-\frac{1}{4}I_4.\]
Finally $\frac{1}{2}R(\theta,\mathbf{n})\cdot R(\theta',\mathbf{n}')=q'\cdot Qq'$ and we obtain thanks to the previous point: 
\[\phi\Big(R(\theta,\mathbf{n})\Big)=Q,\]
that is to say: if $J\in SO_3(\R)$ then $\phi(J)$ is a normalised uniaxial $Q$-tensor. 
\item If $D=\diag(d_1,d_2,d_3)$ then using the explicit form of $\phi$ given in the second point: 
\[\phi(D)=\frac{1}{4}\left(\begin{array}{cccc}d_1+d_2+d_3 &  &  &  \\ & d_1-d_2-d_3 &  &  \\ &  & -d_1+d_2-d_3 &  \\ &  &  & -d_1-d_2+d_3\end{array}\right),\]
and if $Q=\diag(s_1,s_2,s_3,s_4)$ with $s_1+s_2+s_3+s_4=0$ then 
\[\phi^{-1}(Q)=2\left(\begin{array}{ccc}
{s_1+s_2} & & \\ 
 & {s_1+s_3} & \\
 & & {s_1+s_4}
 \end{array}
 \right).\]
\end{enumerate}
\end{proof}

\section{More about $SO_3(\R)$ and $SO_n(\R)$}\label{moreonson} 

\begin{lemma}\label{spanson} For all $n\geq3$~:
\[\Span(SO_n(\R))=\MM_n(\R).\]
\end{lemma}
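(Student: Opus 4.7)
The plan is to show that each elementary matrix $E_{ij}$ (with a single $1$ at position $(i,j)$ and zeros elsewhere) lies in $\Span(SO_n(\R))$; since these form a basis of $\MM_n(\R)$, this suffices. I will construct the diagonal entries, then the skew-symmetric off-diagonal entries, and finally the symmetric off-diagonal entries, the last step being where the hypothesis $n \geq 3$ plays its essential role.

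For the diagonal part, both $I_n$ and the matrices $D^{ij}$ from Definition \ref{changeofvariable} lie in $SO_n(\R)$, and $\tfrac{1}{2}(I_n - D^{ij}) = E_{ii}+E_{jj}$. Since $n \geq 3$, for each $i$ one picks distinct $j,k \neq i$ and writes
\[2E_{ii} = (E_{ii}+E_{jj}) + (E_{ii}+E_{kk}) - (E_{jj}+E_{kk}),\]
so every $E_{ii}$ belongs to $\Span(SO_n(\R))$. For the skew-symmetric off-diagonal part, let $R_{ij}(\theta) \in SO_n(\R)$ denote the rotation of angle $\theta$ in the $(e_i,e_j)$-plane (fixing the remaining basis vectors). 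A direct calculation gives $R_{ij}(\pi/2) - \sum_{\ell \neq i,j} E_{\ell\ell} = E_{ji}-E_{ij}$, so combining with the previous step yields $E_{ij}-E_{ji} \in \Span(SO_n(\R))$ for every $i \neq j$.

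The main step is the symmetric off-diagonal part $E_{ij}+E_{ji}$. Here I pick any third index $k \notin \{i,j\}$ (whose existence uses $n \geq 3$) and consider the rotation of angle $\pi$ about the axis $\mathbf{n} = (e_i + e_j)/\sqrt{2}$, acting nontrivially on the $3$-dimensional coordinate subspace $\Span(e_i,e_j,e_k)$ and as the identity on the orthogonal complement. By Rodrigues' formula \eqref{rodrigue}, its restriction to that $3$D subspace equals $2\,\mathbf{n}\otimes\mathbf{n} - I_3$, which a direct computation gives as $E_{ij}+E_{ji} - E_{kk}$; flipping the $k$-th coordinate is precisely what keeps the determinant equal to $+1$. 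The full matrix in $SO_n(\R)$ is therefore $E_{ij}+E_{ji} - E_{kk} + \sum_{\ell \neq i,j,k} E_{\ell\ell}$, and subtracting the diagonal elements already obtained leaves $E_{ij}+E_{ji} \in \Span(SO_n(\R))$.

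Adding and subtracting then produces every $E_{ij}$ with $i \neq j$, and together with the diagonal step this exhausts a basis of $\MM_n(\R)$. The hypothesis $n \geq 3$ is sharp: for $n = 2$, every element of $SO_2(\R)$ lies in the $2$-dimensional subspace $\{M \in \MM_2(\R) : M_{11}=M_{22},\ M_{12}=-M_{21}\}$, so the span cannot be all of $\MM_2(\R)$.
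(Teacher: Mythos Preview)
Your proof is correct and takes a genuinely different route from the paper's. The paper argues in two steps: first it shows that every diagonal matrix lies in the span (with a case split according to the parity of $n$, writing $\diag(1,0,\dots,0)$ as a combination of explicit diagonal elements of $SO_n(\R)$), and then it invokes the Special Singular Value Decomposition (Definition~\ref{ssvd}): any $M\in\MM_n(\R)$ factors as $PDQ$ with $P,Q\in SO_n(\R)$ and $D$ diagonal, so if $D=\sum_i c_iA_i$ with $A_i\in SO_n(\R)$ then $M=\sum_i c_i\,PA_iQ$ is also in the span. Your argument instead builds every elementary matrix $E_{ij}$ directly from explicit rotations (the $D^{ij}$, planar rotations, and a Rodrigues-type rotation of angle $\pi$), with no parity split and no appeal to the SSVD. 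The trade-off is that the paper's argument is shorter once the SSVD machinery is in hand and reduces everything to the diagonal case, whereas yours is self-contained and uniform in $n$, at the cost of treating the diagonal, skew-symmetric, and symmetric off-diagonal pieces separately. Your remark on the sharpness at $n=2$ is a pleasant addition not present in the paper.
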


\begin{proof}\
First we prove that the diagonal matrices form a subset of $\Span(SO_n(\R))$: it is enough to show that 
\[D~:=\left(\begin{array}{cccc}1 &  &  &  \\ & 0 &  &  \\ &  & \ddots &  \\ &  &  & 0\end{array}\right)\in \Span(SO_n(\R))\]
(the other diagonal matrices with only one nonzero coefficient can be obtained in a similar way). When $n$ is odd:
\[D=I_n + \left(\begin{array}{cccc}1 &  &  &  \\ & -1 &  &  \\ &  & \ddots &  \\ &  &  & -1\end{array}\right)\]
and both matrices in the sum are in $SO_n(\R)$. When $n\geq4$ is even, 

\[\left(\begin{array}{cc}0_4 &    \\  & I_{n-4}   \end{array}\right)=\frac{1}{2} I_n +\frac{1}{2}\left(\begin{array}{cc}-I_4 &    \\  & I_{n-4}    \end{array}\right)\in \Span(SO_n(\R)),\]
thus: 
\[\left(\begin{array}{ccccc}1 &   &   &   &   \\  & -1 &   &   &   \\  &   & -1 &   &   \\  &   &   & 1 &   \\  &   &   &   & 0_{n-4}\end{array}\right)=\left(\begin{array}{ccccc}1 &   &   &   &   \\  & -1 &   &   &   \\  &   & -1 &   &   \\  &   &   & 1 &   \\  &   &   &   & I_{n-4}\end{array}\right)-\left(\begin{array}{cc}0_4 &    \\  & I_{n-4}   \end{array}\right)\in \Span(SO_n(\R)),\]
and similarly: 
\begin{multline*}
4D=\left(\begin{array}{ccccc}1 &   &   &   &   \\  & 1 &   &   &   \\  &   & 1 &   &   \\  &   &   & 1 &   \\  &   &   &   & 0_{n-4}\end{array}\right)+\left(\begin{array}{ccccc}1 &   &   &   &   \\  & -1 &   &   &   \\  &   & -1 &   &   \\  &   &   & 1 &   \\  &   &   &   & 0_{n-4}\end{array}\right)
+\left(\begin{array}{ccccc}1 &   &   &   &   \\  & 1 &   &   &   \\  &   & -1 &   &   \\  &   &   & -1 &   \\  &   &   &   & 0_{n-4}\end{array}\right)
\\+\left(\begin{array}{ccccc}1 &   &   &   &   \\  & -1 &   &   &   \\  &   & 1 &   &   \\  &   &   & -1 &   \\  &   &   &   & 0_{n-4}\end{array}\right)\in \Span(SO_n(\R)).\end{multline*}
The SSVD (Definition \ref{ssvd}) gives the result for any matrix. 
\end{proof}

\begin{corollary}\label{commute}
For $n\geq3$, a matrix that commutes with any matrix of $SO_n(\R)$ is of the form $\lambda I_n$.
\end{corollary}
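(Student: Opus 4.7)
The plan is to leverage Lemma \ref{spanson} to reduce the statement to the classical fact that the center of $\MM_n(\R)$ is $\R I_n$. Concretely, suppose $M \in \MM_n(\R)$ satisfies $MP = PM$ for every $P \in SO_n(\R)$. Given any $N \in \MM_n(\R)$, Lemma \ref{spanson} provides a decomposition $N = \sum_{i} \lambda_i P_i$ with $\lambda_i \in \R$ and $P_i \in SO_n(\R)$. Then by bilinearity of the commutator,
\[
MN - NM = \sum_i \lambda_i (MP_i - P_i M) = 0,
\]
so $M$ lies in the center of the full matrix algebra $\MM_n(\R)$.

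It remains to identify this center with $\R I_n$, which is standard: testing the commutation relation $ME_{ij} = E_{ij}M$ against the elementary matrices $E_{ij}$ (whose $(k,\ell)$-entry equals $\delta_{ik}\delta_{j\ell}$) shows that all off-diagonal entries of $M$ vanish and that all diagonal entries coincide, yielding $M = \lambda I_n$ for some $\lambda \in \R$. No nontrivial obstacle is expected here since Lemma \ref{spanson} has already done the substantive work; the corollary is essentially a one-line consequence.
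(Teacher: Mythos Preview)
Your argument is correct and matches the paper's approach: the corollary is stated immediately after Lemma \ref{spanson} with no separate proof, precisely because it is meant to follow by the linearity argument you give (commuting with a spanning set implies commuting with all of $\MM_n(\R)$, whose center is $\R I_n$). Your write-up simply makes explicit what the paper leaves implicit.
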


We can now prove Lemma \ref{JAA} and its generalisation Lemma \ref{JAAg}.

\begin{proof}[\sc Proof (of Lemma \ref{JAA})] The linear map $\Phi~: \MM_n(\R)\to \MM_n(\R)$ defined by 
\[\Phi(J)~:=\int_{SO_n(\R)} (J\cdot A)A\,dA,\]
satisfies for all $P\in SO_n(\R)$, 
\[\Phi(P)=P\Phi(I_n)=\Phi(I_n)P,\] 
which by corollary \ref{commute} means that 
\[\Phi(I_n)=\lambda I_n.\]
Therefore, $\Phi(P)=\lambda P$ for any $P\in SO_n(\R)$ and the same is true for any matrix $J\in\MM_n(\R)$ by lemma \ref{spanson}. To compute $\lambda$, notice that for the matrix $e_i\otimes e_j$~: 
\[\frac{1}{2}\int_{SO_n(\R)} (e_i\cdot Ae_j)^2\,dA=\lambda.\]
Summing this equality for all $i,j$ gives: 
\[\frac{n}{2}=\lambda n^2,\]
and $\lambda=1/2n$.
\end{proof}

\begin{proof}[\sc Proof (of Lemma \ref{JAAg})]
Let us define the linear map $\psi~: \MM_n(\R)\longrightarrow\MM_n(\R)$ by
\begin{equation}\label{PierreX}\psi(J):=\int_{SO_n(\R)} (J\cdot A)A\,g(A)\,dA.\end{equation}
\begin{enumerate}
\item We first note that $\psi$ is self-adjoint for the dot product $A\cdot B=\Tr(A^TB)$: for any $K\in\MM_n(R)$, 
\[\psi(J)\cdot K =\int_{SO_n(\R)} (J\cdot A)(K\cdot A)\,g(A)\,dA=J\cdot\psi(K).\]
\item We prove that $\Vect(I_n)$ is a stable supspace for $\psi$~: for any $P\in SO_n(\R)$ we have: 
\[P\psi(I_n) P^T=\int _{SO_n(\R)} (I_n\cdot A) PAP^T\,g(A)\,dA=\psi(I_n),\]
and we conclude with corollary \ref{commute} that $\psi(I_n)=\alpha I_n$ with: 
\[\alpha=\frac{2}{n} I_n\cdot \psi(I_n)=\frac{1}{2n}\int_{SO_n(\R)} \Tr(A)^2 g(A)\,dA.\]
\item Since $\psi$ is a self adjoint operator, the orthogonal subspace $\Vect(I_n)^\perp$ is also a stable subspace. Moreover, using the change of variable $A\mapsto A^T$, we see that $\psi(J^T)=\psi(J)^T$ and we have the decomposition: 
\[\Vect(I_n)^\perp=\SS_n^0(\R)\overset{\perp}{\oplus}\mathscr{A}_n(\R),\]
where $\SS_n^0(\R)$ and $\mathscr{A}_n(\R)$ are respectively the subspace of trace free symmetric matrices and the subspace of skew-symmetric matrices. They are both stable subspaces. 
\item We prove now that $\psi~: \SS_n^0(\R)\to\SS_n^0(\R)$ is a uniform scaling. By the spectral theorem, every matrix $J\in \SS_n^0(\R)$ can be written 
\[J=PDP^T,\]
where $P\in SO_n(\R)$ and $D$ is diagonal. Since 
\[\psi(PDP^T)=P\psi(D)P^T,\]
it is enough to prove that there exists $\lambda\in \R$ such that for all diagonal matrices $D$, $\psi(D)=\lambda D$. For $D=\diag(d_1,\ldots,d_n)$, we have: 
\[\psi(D)=\frac{1}{2}\sum_{k=1}^{n} d_k \int_{SO_{n}(\R)} a_{kk} A\,g(A)\,dA.\]
Now, if $i\ne j$, let $D^{ik}\in SO_n(\R)$ be the diagonal matrix such that all the coefficients are equal to 1 except $D^{ik}_{ii}$ and $D^{ik}_{kk}$ which are equal to $-1$. Then the change of variable $A\mapsto D^{ik}A(D^{ik})^T$ gives  
\[\int_{SO_{n}(\R)} a_{kk} a_{ij}\,g(A)\,dA = -\int_{SO_{n}(\R)} a_{kk}\,a_{ij}\,g(A)\,dA  =0,\]
which proves that $\psi(D)$ is diagonal and the $i$-th coefficient of $\psi(D)$ is: 
\[\psi(D)_{ii}=\frac{1}{2}\sum_{k=1}^{n} d_k \int_{SO_{n}(\R)} a_{kk}\,a_{ii}\,g(A)\,dA.\]
Using the fact that $\Tr(D)=0$ and that all the $\int_{SO_n(\R)} a_{ii}\,a_{kk}\,g(A)\,dA$ are equal for $i\ne k$ (by using conjugation by the matrices $P^{ij}$, $i,j\ne k$, see Definition \ref{changeofvariable}), we obtain: 
\[\psi(D)_{ii}=\frac{1}{2}d_i \int_{SO_{n}(\R)} (a_{11}^2-a_{11}\,a_{22}) \,g(A)\,dA,\]
and we conclude that for all $J\in \SS_n^0(\R)$~: 
\[\psi(J)=\lambda J,\]
with 
\[\lambda=\frac{1}{2}\int_{SO_{n}(\R)} (a_{11}^2-a_{11}\,a_{22}) \,g(A)\,dA=\frac{1}{4}\int_{SO_n(\R)} (a_{11}-a_{22})^2\,g(A)\,dA.\]
\item We prove similarly that $\psi~: \mathscr{A}_n(\R)\to\mathscr{A}_n(\R)$ is a uniform scaling. Every $J\in\mathscr{A}_n(\R)$ can be written 
\[J=P CP^T,\]
where 
\begin{equation}\label{PierreA}C=\left(\begin{array}{ccccccc} C_1 & & & & & & \\ & C_3 & & & & & \\ & & \ddots &  & & & \\ & & & C_{2p-1}  & & & \\ & & & & 0 & & \\ & & & & & \ddots & \\ & & & & & & 0\end{array}\right)\end{equation}
is a block diagonal matrix with blocks 
\[C_i=\left(\begin{array}{cc} 0 & -c_i \\ c_i & 0\end{array}\right),\,\,\,\,c_i\in\R^*,\,\,\,\,\,i\in\{1,3,5,\ldots,2p-1\}\]
so that, 
\[\psi(J)=\psi(PCP^T)=P\psi(C)P^T,\]
and 
\[\psi(C)=\frac{1}{2}\sum_{k=1}^{p} c_{2k-1} \int_{SO_n(\R)} (a_{2k,2k-1}-a_{2k-1,2k})\,A\,g(A)\,dA.\]
When $n\geq3$, by using conjugation by the matrices $D^{2j-1,2j}$ for $j\in\{1,\ldots,\floor{n/2}\}$  (Definition \ref{changeofvariable}) we can see that for each $k\in\{1,\ldots,p\}$, the matrix
\[M_k:=\int_{SO_n(\R)} (a_{2k,2k-1}-a_{2k-1,2k})\,A\,g(A)\,dA\]
is of the form \eqref{PierreA}. Moreover, when $n\geq5$, by using conjugation by matrices  $D^{2j-1,2\ell-1}$ for $j\ne\ell$, $j,\ell\in\{1,\ldots,\floor{n/2}\}$ and $j,\ell\ne k$, we can see that all the diagonal blocks of $M_k$ are equal to zero except the one in position $2k-1$. When $n=3$ there is only one block in position 1 so the result holds but when $n=4$ such $j$ and $\ell$ do not exist. In conclusion, when $n\ne4$, $\psi(C)$ is of the form \eqref{PierreA} and each diagonal block $C_{2k-1}'$ of $\psi(C)$ is written 
\[C_{2k-1}'=\mu_{2k-1}C_{2k-1}\]
with 
\[\mu_{2k-1}:=\int_{SO_n(\R)} (a_{2k,2k-1}-a_{2k-1,2k})a_{2k,2k-1}g(A)\,dA=\int_{SO_n(\R)} (a_{21}-a_{12})a_{21}g(A)\,dA,\]
where this equality follows by using conjugation by ``block-permutation matrices": 
\[Q^k := \left(\begin{array}{ccccccccccc}I_2 &  &  &  &  &  &  &  &  &  &  \\ & \ddots &  &  &  &  &  &  &  &  &  \\ &  & I_2 &  &  &  &  &  &  &  &  \\ &  &  & 0_2 & -I_2 &  &  &  &  &  &  \\ &  &  & I_2 & 0_2 &  &  &  &  &  &  \\ &  &  &  &  & I_2 &  &  &  &  &  \\ &  &  &  &  &  & \ddots &  &  &  &  \\ &  &  &  &  &  &  & I_2 &  &  &  \\ &  &  &  &  &  &  &  & 1 &  &  \\ &  &  &  &  &  &  &  &  & \ddots &  \\ &  &  &  &  &  &  &  &  &  & 1\end{array}\right),\]
where the first zero on the diagonal is in position $2k-1$. Therefore, 
\begin{equation}\label{PierreY}\psi(C)=\mu C,\end{equation}
with
\begin{equation}\label{PierreZ}\mu=\frac{1}{2}\int_{SO_n(\R)}(a_{21}-a_{12})a_{21}\,g(A)\,dA=\frac{1}{4}\int_{SO_n(\R)} (a_{21}-a_{12})^2\,g(A)\,dA.\end{equation}
\item Finally, for $J\in\Vect(I_n)^\perp$, writing 
\[J=\frac{J+J^T}{2}+\frac{J-J^T}{2},\]
we have: 
\[\psi(J)= \beta J+\gamma J^T,\]
with 
\[\beta=\frac{1}{2}(\lambda+\mu)=\frac{1}{4}\int_{SO_n(\R)} \Big((a_{11}^2-a_{11}\,a_{22})+(a_{21}-a_{12})a_{21}\Big)\,g(A)\,dA,\]
and
\[\gamma=\frac{1}{2}(\lambda-\mu)=\frac{1}{4}\int_{SO_n(\R)} \Big((a_{11}^2-a_{11}\,a_{22})-(a_{21}-a_{12})a_{21}\Big)\,g(A)\,dA.\]
\item In conclusion, writing the decomposition 
\[J=\frac{1}{n}\Tr(J)I_n+K,\]
where $K\in\Vect(I_n)^\perp$, we obtain
\[\psi(J)=a\Tr(J) I_n+bJ+cJ^T,\]
with
\[a=\frac{\alpha-\beta-\gamma}{n},\]
and
\[b=\beta=\frac{1}{8}\int_{SO_n(\R)}\Big((a_{11}-a_{22})^2+(a_{12}-a_{21})^2\Big)\,g(A)\,dA,\]
and
\[c=\gamma=\frac{1}{8}\int_{SO_n(\R)}\Big((a_{11}-a_{22})^2-(a_{12}-a_{21})^2\Big)\,g(A)\,dA.\]
And there are of course many other ways to write the coefficients $a$, $b$ and $c$.
\end{enumerate}
\end{proof}

\begin{rem}
In dimension 4, the result still holds for symmetric matrices. For general matrices, the result can be proved in particular cases, for instance when $g$ is a function of the trace, by using an explicit parametrisation of $SO_4(\R)$ such as the 4-dimensional version of \eqref{SOSphere}. 
\end{rem}

\section*{Acknowledgments}
\addcontentsline{toc}{section}{Acknowledgments}

PD acknowledges support by the Engineering and Physical Sciences Research Council (EPSRC) under grants no. EP/M006883/1 and EP/P013651/1, by the Royal  Society and the Wolfson Foundation through a Royal Society Wolfson Research Merit Award no. WM130048 and by the National Science Foundation (NSF) under grant no. RNMS11-07444 (KI-Net). PD is on leave from CNRS, Institut de Math\'ematiques de Toulouse, France.

\noindent A. D. acknowledges the hospitality of CEREMADE, Universit\'e Paris-Dauphine and the University of Sussex where part of this research was carried out. A.D. was supported for this work by the \'Ecole Normale Sup\'erieure de Rennes through an Erasmus+ grant. This work was initiated at the \'Ecole Normale Sup\'erieure de Lyon as part of a Master degree delivered to A.D. 

\noindent A.F. acknowledges support from the EFI project ANR-17-CE40-0030 and the Kibord project ANR-13-BS01-0004 of the French National Research Agency (ANR), from the project Défi S2C3 POSBIO of the interdisciplinary mission of CNRS, and the project SMS co-funded by CNRS and the Royal Society.

\noindent SMA is supported by the Vienna Science and Technology Fund (WWTF) with a Vienna Research Groups for Young Investigators, grant VRG17-014.\\

\noindent\textit{Data statement:} no new data were collected in the course of this research. \\

\noindent\textit{Conflict of interest:} the authors have no conflict of interest to declare.

\bibliographystyle{abbrv}
\bibliography{biblio}

\end{document}